\numberwithin{equation}{section}
\theoremstyle{plain}
\newtheorem{theorem}{Theorem}
\newtheorem{lemma}[theorem]{Lemma}
\newtheorem{corollary}[theorem]{Corollary}
\newtheorem{amplification}[theorem]{Amplification}
\newtheorem{proposition}[theorem]{Proposition}
\theoremstyle{definition}
\newtheorem{definition}[theorem]{Definition}
\newtheorem{remark}[theorem]{Remark}
\newtheorem{assumption}[theorem]{Assumption}
\newtheorem{example}[theorem]{Example}
\def\eps{\varepsilon}
\def\ep{\varepsilon}
\def\lan{\langle}
\def\ran{\rangle}
\def\bdef{\begin{definition}}
\def\endef{\end{definition}}
\def\bthm{\begin{theorem}}
\def\ethm{\end{theorem}}
\def\blm{\begin{lemma}}
\def\elm{\end{lemma}}
\def\brm{\begin{remark}}
\def\erm{\end{remark}}
\def\bprop{\begin{proposition}}
\def\eprop{\end{proposition}}
\def\bcor{\begin{corollary}}
\def\ecor{\end{corollary}}
\def\be{\begin{eqnarray}}
\def\ee{\end{eqnarray}}
\def\beal{\begin{aligned}}
\def\enal{\end{aligned}}
\def\om{\omega}
\def\eps{\varepsilon}
\def\grr{\tau}
\def\phi{\varphi}
\def\f{\varphi}
\def\R{\mathbb R}
\def\C{\mathbb C}
\def\Nn{\mathbb N}
\def\T{\mathbb T}
\def\Z{\mathbb Z}
\def\M{\mathcal M}
\def\cC{\mathcal C}
\def\cP{\mathcal P}
\def\cF{\mathcal F}
\def\~{\tilde}
\def\cE{\mathcal E}
\def\cH{\mathcal H}
\def\cA{\mathcal A}
\def\cB{\mathcal B}
\def\cC{\mathcal C}
\def\CC{{\mathcal C}^{m_0}(P)}
\def\cD{\mathcal D}
\newcommand{\Tr}{\operatorname{Trace}}
\def\PP{\mathbf{P}}
\def\EE{\mathbf{E}}
\def\p{\partial}
 \newcommand{\strela}{\rightharpoonup}
 \def\Ye{ Y}
  \def\BR{\bar B_R}
  \def\BM{\bar B_M}
\def\llan{\langle\!\langle}
\def\rran{\rangle\!\rangle}
\def\be{\begin{equation}}
\def\ee{\end{equation}}
\def\bdef{\begin{definition}}
\def\endef{\end{definition}}
\def\blm{\begin{lemma}}
\def\elm{\end{lemma}}
\def\beal{\begin{aligned}}
\def\enal{\end{aligned}}
\newtheorem*{Pf}{Proof}
\renewenvironment{proof}{\begin{Pf} \begin{upshape}} {\end{upshape} \qed\end{Pf}}
\numberwithin{equation}{section}
\numberwithin{theorem}{section}
\title[ Averaging and mixing for stochastic perturbations]
{Averaging and mixing  for stochastic  perturbations of linear conservative systems}
\author{Guan Huang}
\address{School of Mathematics and Statistics, Beijing Institute of Technology, Beijing, China,\& Peoples' Friendship University of Russia (RUDN University), Moscow, Russia}
\email{huangguan@tsinghua.edu.cn}	
\author{Sergei Kuksin}
\address{Universit\'e Paris-Diderot (Paris 7), UFR de Math\'ematiques - Batiment Sophie Germain, 5 rue Thomas Mann, 75205 Paris,
\& Peoples' Friendship University of Russia (RUDN University), Moscow, Russia}
\email{ sergei.kuksin@imj-prg.fr}
\begin{document}

\maketitle

\hfill{\it Dedicated to the memory of M.I.Vishik } 

\hfill{\it  on the occasion of his 100-th birthday.} \medskip


\begin{abstract}
We study stochastic perturbations of linear systems of the form 
$$
dv(t)+Av(t)\,dt={\ep} P(v(t))dt+\sqrt{{\ep}}\, \cB(v(t)) d W (t), \quad v\in\R^{D},  \eqno{ (*)}
$$
where $A$ is a linear operator with non-zero imaginary spectrum. It is  assumed that the vector field $P(v)$ 
and the matrix-function $\cB(v)$ are locally Lipschitz with at most a polynomial growth at infinity, that the equation
is well posed and first few moments of norms of solutions $v(t)$ are bounded uniformly in $\eps$. We use the  Khasminski 
approach to stochastic averaging to show that as $\eps\to0$, a solution $v(t)$, written in  the interaction representation in
 terms of operator~$A$, for $0\le t \le\,$Const$\,\eps^{-1}$ converges in distribution to a solution of an effective equation. 
 The latter  is obtained from $(*)$ by means of certain averaging. Assuming that eq.~$(*)$ and/or the effective
 equation are mixing, we examine this convergence further. 
 \end{abstract}

\date{}

	\maketitle
%
	\tableofcontents


\section{Introduction}

\subsection{The setting and problems.}

The goal of this paper is to present an averaging theory for   perturbations of  conservative  linear differential  equations
 by locally Lipschitz nonlinearities and stochastic terms. 
Namely, we will examine  stochastic  equations
\begin{equation}\label{1}
dv(t)+Av(t)\,dt={\ep} P(v(t))dt+\sqrt{{\ep}}\, \cB(v(t)) d W (t), \qquad v\in\R^{D}, 
\end{equation}
where $0<\eps \le1$, 
 $A$ is a  linear operator with non-zero pure imaginary eigenvalues $\{ i\lambda_j\}$ 
 (so the dimension $D$ is even),  $P$ is a locally  Lipschitz  vector field on $\R^D$, 
  $W(t)$ is the standard  Wiener process in $\mathbb{R}^{N}$ and $\cB(v)$ is an $D\times N$-matrix.  We wish 
  to study for small $\eps$ the behaviour of solutions for eq.~\eqref{1} on time-intervals of order $\eps^{-1}$, and under some 
  additional restriction on the equation examine the limiting as $\eps\to0$ behaviour of 
  solutions uniformly in time.

\subsection{Our results and their deterministic analogies.} 
 We tried to make  our work ``reader-friendly" and    accessible  to people  with just a limited knowledge of 
stochastic calculus.  To achieve that in the main part of the paper we restrict ourselves to 
 the case of equations with  additive noise $\sqrt\eps\, \cB\, dW(t)$ and exploit there  a technical convenience: 
 we introduce in $\R^D$ a complex structure, re-writing the phase-space  $\R^D$  as $\C^{D/2}$
(recall that $D$ is even), in such a way that in the corresponding complex coordinates the operator $A$ is diagonal,
$
A = $diag$\{i\lambda_j\}.
$
General equations \eqref{1} are discussed in Section~\ref{s-ef-eq}, where they  are treated  in parallel with earlier considered   
equations with  additive noise. 

As it is custom in the classical  deterministic   Krylov--Bogolyubov (K-B) averaging  (e.g. see \cite{BM}, 
 \cite{AKN} and  \cite{JKW}), 
 to study solutions $v(t)\in \C^{D/2}$ we write them in  the 
  interaction representation which preserves the norms of the complex components $v_j(\tau)$, but 
amends their angles. See below  substitution \eqref{interac}. 
The first  principal result of the work is  given by Theorem~\ref{average-thm}, where we assume 
uniform in $\eps$ and in $t\le C \eps^{-1}$ bounds on a first few moments of 
 norms of solutions. The  theorem states that  as $\eps\to0$,  for  $t\le C\eps^{-1}$
  solutions $v(t)$, written in terms of  the interaction representation,  weakly converge in distribution to solutions 
  of an additional {\it effective equation}. The latter is obtained from eq.~\eqref{1} by means of 
certain averaging of vector field $P$  in terms of the spectrum $\{i\lambda_j\}$  and in many cases may be written down explicitly. 
Proof of Theorem~\ref{average-thm}, given in Section~\ref{s_4}, is obtained by a synthesis of the K-B method (as it is 
presented e.g. in \cite{JKW}) and the Khasminski approach to the stochastic averaging \cite{Khas68}; it  may serve as an 
introduction to the latter. The number of works 
 on the  stochastic averaging is immense, see Section~\ref{s_1.3} for some references. 
We were not able to find there the result of Theorem~\ref{average-thm}, but do not insist on its novelty (and certainly
related statements may be found in the  literature). 

 In Section~\ref{s_5} we suppose that the bounds on the moments of   norms of solutions, mentioned above, are uniform in time, and that 
 eq.~\eqref{1} is mixing. So its solutions, as time goes to infinity, 
 converge in distribution to a unique stationary measure (which is a Borel measure in $\R^D = \C^{D/2}$). 
  In Theorem~\ref{thm-limit-mixing}, postulating  that the effective equation 
 as well is mixing, we prove that  when $\eps\to0$ the stationary measure for eq.~\eqref{1} converges to that for the effective equation.
 Note that this convergence holds  without  passing to  the interaction representation.

 In short Section \ref{ss_51} we discuss non-resonant systems \eqref{1} (where the frequencies $\{\lambda_j\}$ are 
 rationally independent). In particular, we show that then the actions $I_j(v(t))$ of solutions $v(t)$ (see below
 \eqref{Iphi}), as $\eps\to0$,  converge in distribution to solutions of a system of stochastic equations, depending only on
 actions. The convergence hold on time-intervals $0\le t\le C\eps^{-1}$. 
 
  In Section~\ref{s_8} we keep the assumption on the norms of solutions from  Section~\ref{s_5}. Supposing  that the 
  effective equation is mixing (and without assuming that for the original equation   \eqref{1}) 
  we prove there Theorem~\ref{thm-uniform}. It states 
   that the convergence as in the principal Theorem~\ref{average-thm} is uniform for $t\ge0$   (and not only for 
  $t\le C\eps^{-1}$).

  In Proposition~\ref{p_6.4} we present a simple sufficient condition on eq.~\eqref{1}, based on results from \cite{khasminskii}, 
   which insures that  Theorems~\ref{average-thm}, \ref{thm-limit-mixing} and \ref{thm-uniform}  apply to it.
 
 In Section \ref{s-ef-eq} we pass to general equations \eqref{1}, where the dispersion matrix $\cB$  depends on $v$. 
 Assuming the same estimates on solutions as in Section~\ref{s_4} we show that   Theorem~\ref{average-thm}  remains valid
 if either  the matrix $\cB(v)$ is non-degenerate,  or it is  a $C^2$-smooth function of $v$. 
  Theorems~\ref{thm-limit-mixing} and \ref{thm-uniform}  also remains true for general  systems \eqref{1}, but we do not discuss this,
 hoping that the corresponding modifications of the proofs should be clear after reading Section~\ref{s-ef-eq}.
 \smallskip

 A deterministic analogy of our results, which deals  with eq.~\eqref{1} with $W=0$ and describes  the behaviour of its solutions 
 on time-intervals of order  $\eps^{-1}$ in  the interaction representation in comparison 
with solutions for a corresponding effective equation,   is given by the K-B
 averaging (see \cite{BM, AKN, JKW}).\footnote{Theorem~\ref{average-thm} 
also  applies  to such equations, but  then  its assertion becomes unnatural.}
  Theorem~\ref{thm-limit-mixing} has no analogy for deterministic systems, but Theorem~\ref{thm-uniform} has it. Namely, for the K-B averaging it is known that if the effective equation has 
a globally asymptotically stable equilibrium, then the convergence of  solutions for eq.~\eqref{1}$_{W=0}$, 
 written in  the interaction representation, to solutions of the effective equation, is uniform in time. 
 This result is known in  folklore as the second K-B theorem and may be found in \cite{Burd}.

The  K-B method and Khasminski approach to averaging which we exploit, are flexible tools. They are 
applicable    to various stochastic systems 
  in finite and infinite dimensions, including stochastic PDEs, and the specific realization of the two methods which we use now  is inspired
  by our previous work on averaging for stochastic PDEs.  See \cite{HKM, KM} for an analogy for SPDEs of  Theorem~\ref{average-thm}, 
   \cite{HKM} -- for an analogy of Theorem~\ref{thm-limit-mixing} and \cite{HGK22} -- for an analogy of Theorem~\ref{thm-uniform} (also see \cite{DW} for more results and references on averaging for SPDEs).


  \subsection{Relation with  classical  stochastic averaging.} \label{s_1.3}
   The averaging in stochastic systems is a well developed topic, usually dealing 
  with fast-slow stochastic   systems,  e.g. see  publications     \cite{Khas68}, 
   \cite[Section~7]{FW}, \cite[Section~II.3]{Skor}, \cite{Krs},    \cite{Ver}, \cite{Kif} and references  there.
   To explain relation of that  theory with our work let us write eq.~\eqref{1} in the complex form $v(t) \in \C^n$, $n=D/2$ (when the operator $A$ is diagonal)
   and then pass  to  the slow time $\tau = \eps t$ and    action-angle coordinates
$(I, \varphi) = (I_1, \dots, I_n; \f_1, \dots, \f_n)
\in \R^n_+\times \T^n$, where $\R_+= \{x\in \R: x\ge0\}$, $\T^n = \R^n/ (2\pi \Z^n)$ and 
\be\label{Iphi}
 \begin{split}
I_k(v)=\tfrac{1}{2}|v_k|^2=\tfrac{1}{2}v_k\bar v_k, \quad
 \varphi_k(v)=\text{Arg} \;v_k\in \mathbb{S}^1= \R/ 2\pi\Z,
 \quad k=1, \dots, n
\end{split}
 \ee
(if $v_k=0$, we set $\varphi_k(v)=0\in \mathbb{S}^1$).  In these  coordinates  equation \eqref{1} takes the form
\be\label{ac_an}
\begin{cases}
dI (\tau)=P ^I(I,\varphi)d\tau+ \Psi^I(I,\varphi) d\beta(\tau),\\
d\varphi (\tau) +{\ep}^{-1}\Lambda d\tau=P ^\varphi(I,\varphi)d\tau+
 \Psi^\varphi(I,\varphi) d\beta (\tau).\end{cases} 
\ee
Here $\beta=(\beta_1, \dots, \beta_N)$, where   $\{\beta_l\}$ are independent standard real Wiener processes, and 
 the coefficients of the system  are given by  It\^o's formula. 
This is a fast-slow system with the slow variable $I$ and  fast variable $\varphi$. 
 The stochastic averaging treats systems like \eqref{ac_an}, usually adding to the fast part of the 
  $\phi$-equation a non-degenerate stochastic  term of order
 $\eps^{-1/2}$. The (first) goal of a system's analysis usually 
  is to prove that on time-intervals $0\le\tau\le T$ distributions  of the $I$-components of 
 solutions converge as $\eps\to0$ to distributions of solutions for a suitably averaged $I$-equation. After that other goals may be 
 pursued.\footnote{E.g. one may 
 study deviation of the $I$-components of solutions from the averaged dynamics (see \cite{FW, Kif}) or, under stronger restrictions on the system, 
 may examine behaviour of solutions on longer time intervals (see paper \cite{Khas66} and works, descending from it). }
 
 Unfortunately  the stochastic averaging does not apply directly to systems \eqref{ac_an}, coming from equations 
 \eqref{1}, since then the coefficients of the 
 $\phi$-equation have singularities when some  $I_k$ vanishes, and since  the fast 
  $\phi$-equation is  rather degenerate if the vector $\Lambda$ is 
 resonant. Instead we borrow from the theory  the Khasminski method   \cite{Khas68} of stochastic averaging  and apply it to 
eq.~\eqref{1}, written in  the interaction representation,   thus arriving at the assertion of  Theorem~\ref{average-thm}.
 Averaging theorem for stationary solutions of eq.~\eqref{ac_an} and for the corresponding stationary measures are known in the stochastic averaging, but 
 (naturally) they control the limiting behaviour  only of the $I$-components of the stationary solutions and measures, while our 
 Theorem~\ref{thm-limit-mixing} describes a limit of the whole stationary measure. It seems that no analogy of 
 Theorem~\ref{thm-uniform} is   known in the stochastic averaging. 
 
 At the origin of this paper are lecture notes for an online course which SK was teaching in the Shandong University (PRC) in the 
 autumn term of the year 2020.

\bigskip
\noindent{\it Notation.}
 For a Banach space $E$ and $R>0$, $B_R(E)$ stands for the open $R$-ball $\{ e\in E: | e|_E < R\}$, and $\bar B_R(E)$ -- 
 for its  closure $ \{| e|_E \le R\}$;
  $C_b(E)$ stands for  the space of bounded continuous function on  $E$, and $C([0,T], E)$ -- for
   the space of continuous curves
$[0,T] \to E$, given the sup-norm. For  any $0<\alpha\leqslant1$ and  $u\in C([0,T], E)$, 
\be\label{Hold}
\|u\|_\alpha=\sup_{0\leqslant  \tau <\tau'\leqslant T}\frac{|u(\tau')-u(\tau)|_E}{|\tau'-\tau|^\alpha}+\sup_{\tau\in[0,T]}|u(\tau)|_E \le \infty. 
\ee
This is a norm in the H\"older space $C^\alpha([0,T], E)$.  We denote the standard $C^m$-norm for $C^m$-smooth functions on $E$ as $|\cdot|_{C^m(E)}$. 
  By  $\mathcal{D}(\xi)$ we denote the  law of a random variable $\xi$,  by $\strela$ -- the weak convergence of measures, and by $\cP(M)$ -- the
  space of Borel measures on a metric space $M$.  For a measurable mapping $F: M_1\to M_2$ and $\mu\in \cP(M_1)$ 
  we denote by $F\circ\mu\in \cP(M_2)$ the image of $\mu$ under $F$; i.e. $F\circ\mu(Q) = \mu(F^{-1}(Q))$. 
  
   If $m\ge0$ and 
    $L$ is $\R^n$ or $\C^n$,  then $\text{Lip}_m(L, E)$ is the collection of maps $F:L \to E$ such
 that for any $R\geqslant1$ we have
\be\label{nota}
 (1+ |R|)^{-m}\big(\text{Lip}(F|_{\BR(L)})+\sup_{v\in \BR(L)}|F(v)|_E\big)  =: \cC^m(F) <\infty,
\ee
where $\text{Lip}(f)$ is the Lipschitz constant of a mapping $f$  (note that, in particular,
  $
  |F(v)|_E \le \cC^m(F) (1+ |v|_L)^m\,
  $
  for any $v\in L$).
For a complex matrix $A=(A_{ij})$, $A^*= (A^*_{ji})$ stands for its Hermitian conjugate:
$
A^*_{ij} = \bar A_{ji}
$
(so for a real matrix $B$, $B^*$ is  the transposed matrix).  For a set $Q$  we denote 
 by $\mathbf{1}_Q$  its  indicator  function, and by $Q^c$ -- its complement.  
 Finally, $\R_+$ ($\Z_+$)  stands for the set of nonnegative real numbers (nonnegative integers),
  and for real numbers $a$ and $b$, $a\vee b$ and $a\wedge b$ indicate their  maximum and minimum.

\section{Linear systems and their perturbations}\label{s_1.1}
In this section we give the  setting of the problem and  specify  assumptions on the operator $A$, vector field $P$ and noise 
$\sqrt\eps\,\cB(v)d{W}$ in eq.~\eqref{1}.
To 
simplify presentation and explain better the  ideas, in the main part of the text we assume that the noise 
 is additive, i.e. $\cB$ is a constant matrix (possibly degenerate).  We will  discuss 
general equations \eqref{1} in  Section~\ref{s-ef-eq}.

\subsection{Assumptions on $A$ and  ${W}(t)$}
We assume that the unperturbed  linear system
\begin{equation}\label{linear-s1}
(d/dt)v +Av=0, \qquad v\in \R^D,
\end{equation}
is such that all its trajectories
are bounded as $t\to\pm\infty$. Then 
 the eigenvalues of $A$ are pure imaginary,  go in pairs $\pm i\lambda_j$ and $A$ has no Jordan cells. We also assume that $A$ is invertible. 
 So 
 \begin{enumerate}
\item  eigenvalues  of $A$ are of the form $\pm i\lambda_j$, where $\R\ni \lambda_j\ne0$; 

\item in the Jordan normal form of $A$ there are  no Jordan cells.
\end{enumerate}

By these assumptions  ${D}=2n$, and in $\mathbb{R}^{2n}$ there exists a base 
$\{\mathbf{e}_1^+,\mathbf{e}_1^-,\dots, \mathbf{e}_n^+, \mathbf{e}_n^-\}$  in which  
 the linear operator $A$ takes the block-diagonal  form:
\[A=\left(\begin{array}{c c c}
\begin{array}{cc}0&-\lambda_1\\
\lambda_1&0\end{array}&&0\\
&\ddots&\\
 0&&\begin{array}{cc}0&-\lambda_n\\\lambda_n&0\end{array}\end{array}\right)
 \]
 We denote by $(x_1,y_1,\dots, x_n,y_n)$ the coordinates, corresponding to this base, and for 
 $j=1,\dots,n$\  set  $z_j=x_j+iy_j$. Then $\mathbb{R}^{2n}$ becomes the space of complex vectors  $(z_1,\dots,z_n)$, i.e.
$ \mathbb{R}^{2n} \simeq  \C^n$. In the complex coordinates the standard inner product in $\R^{2n}$ reads
\be\label{scalar}
\langle z,z'\rangle=\text{Re}\sum_{j=1}^nz_j\bar{z}_j',\quad z,\;z'\in\mathbb{C}^n.
\ee
Let us denote  by
$$\Lambda=(\lambda_1,\dots,\lambda_n)\in (\R\setminus \{0\})^n $$
the {\it frequency vector} of the linear system \eqref{linear-s1}. Then in the complex coordinates $z$ the operator $A$ reads
\[Az=\text{diag}\{i\Lambda\} z,
\]
where $\text{diag}\{i\Lambda\} $ is the  diagonal operator, sending  a vector 
$
 (z_1,\dots,z_n) 
 $
 to 
 $
 (i\lambda_1z_1,\dots,i\lambda_nz_n).
 $
Therefore in $\R^{2n}$,  written as
the complex space $\mathbb{C}^n$,   linear equation \eqref{linear-s1} takes the diagonal form,
\[
(d/dt)
v_k+i\lambda_k v_k=0, \quad 
\;\; 1\leqslant k\leqslant n.\]
Below we examine the perturbed   eq.  \eqref{1}, using these complex coordinates.

We next discuss the random process $W(t)$, written in the complex coordinates.
The standard {\it complex Wiener process } has the form 
\be\label{complex}
\beta^c(t)=\beta^+(t)+i\beta^-(t) \in \C,
\ee
where $\beta^+(t)$ and $\beta^-(t)$ are independent  standard (real) Wiener processes, defined on some 
 probability space  $(\Omega, \mathcal{F}, \mathbf{P})$. 
 Then $\bar\beta^c(t)=\beta^+(t)-i\beta^-(t)$,
 and any Wiener process  $W(t)\in \mathbb{C}^n$ may be conveniently 
  written in the complex from as 
\be\label{general}
W_k = \sum_{l=0}^{n_1}\Psi_{kl}^1\beta^c_l+\sum_{l=1}^{n_1}\Psi_{kl}^2 \bar\beta^c_l,\quad  k=1,\dots,n,
\ee
where $\Psi^1=(\Psi_{kl}^1)$ and $\Psi^2=(\Psi_{kl}^2)$ are complex $n\times n_1$ matrices and $\{\beta^c_l\}$ are independent
standard  complex Wiener processes. Again, in order to   simplify  presentation below we suppose 
   that the noise in  eq.~\eqref{1} is of  the form
\[
W (t)=\sum_{l=1}^{n_1}\Psi_{kl} \beta^c_l(t), \quad k=1,\dots,n.
\]
We  {\it do not} assume that the matrix $\Psi$ is non-degenerate (in particular, it may be zero). 
 Then   the perturbed   equation \eqref{1} in the complex coordinates  reads as
\begin{equation}\label{v-equation1}
d v_k+i\lambda_kv_kdt={\ep} P_k(v)dt+\sqrt{{\ep}}\sum \Psi_{kl}d\beta^c_l(t), \quad k=1,\dots,n\,,
\end{equation}
where $ v=(v_1,\dots,v_n)\in\mathbb{C}^n$ and $0<\eps\le1$. 

The results, obtained below for eq. \eqref{v-equation1}, remain true for general equations \eqref{1} on the price 
of heavier calculation. The corresponding argument is sketched in Section~\ref{s-ef-eq}.

\subsection{Assumptions on $P$ and on  the perturbed equation.}
Our first goal is to study  equation~\eqref{v-equation1}  with  $0<{\ep}\ll1$ 
on a time interval $0\le t \le   \eps^{-1}T$,  where $T>0$ is a fixed constant.  Introducing the slow time
 $$\tau={\ep} t$$
 we write the  equation   as
\begin{equation}\label{v-equation-slowtime}
dv_k(\tau)+ i{\ep}^{-1}\lambda_kv_kd\tau=P_k(v)d\tau+\sum_{l=1}^{n_1}\Psi_{kl}d\tilde{\beta^c_l}(\tau),\qquad k=1,\dots,n, \; \;0\le \tau\le T.
\end{equation}
 Here 
$\{\tilde\beta^c_l(\tau),l=1,\dots,n_1\}$ is another set of independent  standard complex Wiener processes, which we now
 re-denote back to $\{\beta^c_l(\tau),l=1,\dots,n_1\}$.
 We stress that the equation above is nothing but the original  eq.~\eqref{1}, where its linear part
 \eqref{linear-s1} is conservative and non-degenerate in the sense of conditions (1) and (2),  written in the complex coordinates  
 and slow time. So all results below concerning eq.~\eqref{v-equation-slowtime} (and eq.~\eqref{a-equation2}) may be 
 reformulated for eq.~\eqref{1} on the price of heavier notation.

 Everywhere below in this paper we assume  the following assumption concerning  the well-posedness of eq.~\eqref{v-equation-slowtime}:

\begin{assumption}\label{assume-wp1}
 \begin{enumerate} \item The  drift $P(v)=(P_1(v),\dots,P_n(v))$ is a locally Lipschitz vector field,
belonging to $ \text{Lip}_{m_0}(\mathbb{C}^n,\mathbb{C}^n)$ for some 
 $m_0\ge0$ (see \eqref{nota}).
\item For any $v_0\in\mathbb{C}^n$,  equation \eqref{v-equation-slowtime} has a unique strong
solution $v^{\ep}(\tau;v_0)$, $\tau\in[0,T]$, equal $v_0$ at $\tau=0$.
Moreover,  there exists  
$m_0'>(m_0\vee1)$ such that 
\begin{equation}\label{moment1}
{\EE} \sup_{0\leqslant\tau\leqslant T}|v^{{\ep}}(\tau;v_0)|^{2 m'_0}\leqslant C_{m'_0}(|v_0|,T)<\infty
\qquad \forall \, 0<\eps\le1, 
\end{equation}  
where $C_{m'_0}(\cdot)$ is a non-negative  continuous function on $\R_+^2$,  non-decreasing in both arguments.
\end{enumerate}
\end{assumption}

Our proofs easily generalize to the case when the vector field $P$ is locally Lipschitz, satisfying 
$
|P(v)| \le C (1+|v|)^{m_0}
$
for all $v$ and some $C>0, \, m_0\ge0$.\footnote{ 
See \cite{JKW} for averaging in deterministic perturbation of  eq.~\eqref{linear-s1} by locally Lipschitz vector fields.}
In this case the argument remains essentially the same (but become a bit longer), and the constants in estimates depend
not only on $m_0$, but also on the locally Lipschitz constant of $P$, which is the function
$
R \mapsto $Lip$\,P\!\mid\!_{\bar B_R(\C^n)}. 
$

Below  $T>0$ is fixed and the dependence of constants on $T$  usually is not indicated.  
Solutions of eq.  \eqref{v-equation-slowtime}  are assumed to be strong, unless otherwise stated. As usual,  strong solutions are
understood in the sense of an integral equation. I.e., $v^\eps(\tau;v_0) = v(\tau)$, $0\le\tau\le T$, is a strong solution, equal $v_0$ at $\tau=0$,
if a.s.
$$
v_k(\tau)+ \int_0^\tau\big (
i{\ep}^{-1} \lambda_kv_k(s) - P_k(v(s))\big)ds
= v_{0k}+\sum_{l=1}^{n_1}\Psi_{kl}  {\beta^c_l}(\tau),
\;\;\; k=1,\dots,n,
$$
for $0\le\tau\le T$. 

\subsection{Interaction representation.}

Now  in eq.  \eqref{v-equation-slowtime} we  pass to the   {\it   interaction representation}, which means that we  substitute
\be\label{interac}
v_k(\tau)=e^{-i \tau{\ep}^{-1}\lambda_k}a_k(\tau), \quad k=1, \dots, n. 
\ee
Then $v_k(0)=a_k(0)$  and we obtain the following equations for variables $a_k(\tau)$:
\begin{equation}\label{a-equation1}
da_k(\tau)=e^{i\tau{\ep}^{-1}\lambda_k}P_k (v)+e^{i\tau{\ep}^{-1}\lambda_k}\sum_{l=1}^{n_1}\Psi_{kl}d\beta^c_l(\tau),\quad  k=1,\dots,n.
\end{equation}
Actions $I_k = |a_k|^2/2$ for solutions of \eqref{a-equation1} are the same as the actions for solutions of \eqref{v-equation-slowtime}.
Compare to \eqref{v-equation-slowtime}, in  eq.~\eqref{a-equation1}  we have removed from the drift
the large term ${\ep}^{-1}\text{diag}(i\Lambda)v$ on the price that now  coefficients of the system
 are fast oscillating functions of $\tau$.

To rewrite conveniently the equations above we introduce  the rotation operators $\Phi_w$: for
 any real vector $w=(w_1,\dots,w_n)\in\mathbb{R}^n$ we denote
\be\label{Phi}
\Phi_w: \mathbb{C}^n\to\mathbb{C}^n, \quad  \Phi_w=\text{diag}\{e^{iw_1},\dots,e^{iw_n}\}.
\ee
Then
$
(\Phi_w)^{-1}=\Phi_{-w}$, $\Phi_{w_1}\circ\Phi_{w_2}=\Phi_{w_1+w_2}$, $ \Phi_0=\mathrm{id},
$
 each $\Phi_w$ is an unitary transformation, so $\Phi_w^* = \Phi_w^{-1}$. Moreover, 
$$
|(\Phi_wz)_j|=|z_j|,\qquad \forall z, w, j.
$$
In terms of  operators $\Phi$ we write  $v(\tau)$ as  $\Phi_{\tau{\ep}^{-1}\Lambda} a(\tau)$, and write 
 system \eqref{a-equation1}  as
\begin{equation}\label{a-equation2}
da(\tau)=\Phi_{\tau{\ep}^{-1}\Lambda}P(\Phi_{-\tau{\ep}^{-1}\Lambda} a(\tau)  )d\tau+\Phi_{\tau{\ep}^{-1}\Lambda}\Psi d\beta^c(\tau), \quad
a(\tau)\in \C^n, 
\end{equation}
where $\beta^c(\tau)=(\beta^c_1(\tau),\dots,\beta^c_{n_1}(\tau))$.
This is the equation which we are going to study for   small ${\ep}$ for $0\leqslant \tau\leqslant T$
 under the initial condition
 \be\label{same}
 a(0)=v(0)=v_0.
 \ee
The solution  $a^\eps(\tau; v_0) = \Phi_{-\tau{\ep}^{-1}\Lambda} v^\eps(\tau; v_0)$
 of \eqref{a-equation2}, \eqref{same}  also satisfies  estimate \eqref{moment1},
for each $\eps \in (0,1]$.

 We recall that a $C^1$-diffeomorphism $G$ of  $\mathbb{C}^{n}$ transforms a vector filed $V$ to the  field $G_*V$, where
$(G_*V)(v)=dG(u)\big(V(u)\big)$ for $  u=G^{-1}v.$
In particular, 
$$
\big((\Phi_{ \tau{\ep}^{-1}\Lambda })_*P\big)(v)=\Phi_{ \tau{\ep}^{-1}\lambda_k}\circ P(\Phi_{-{\ep}\tau\Lambda}v).
$$
So  equation \eqref{a-equation2} may be written as
\[
da(\tau)=\big((\Phi_{\tau{\ep}^{-1}\Lambda})_*P\big)(a(\tau))d\tau+\Phi_{\tau{\ep}^{-1}\Lambda} \Psi d\beta^c(\tau).
\]

\subsection{The compactness}\label{s_2.3}
For $0<{\ep}\leqslant1$ we denote by $a^{{\ep}}(\tau;v_0)$ a  solution of  equation \eqref{a-equation2}, equals $v_0$ at $\tau=0$.
 Then
 $$
 a^{{\ep}}(\tau;v_0)=\Phi_{\tau{\ep}^{-1}\lambda_k}v^{{\ep}}(\tau;v_0).
 $$
 A unique solution
  $v^{{\ep}}(\tau; v_0)$ of \eqref{v-equation-slowtime} exists by Assumption \ref{assume-wp1}, so a solution $a^{{\ep}}(\tau;v_0)$  as well exists
  and is unique. Our goal is to examine its law 
$$Q_\eps:= \mathcal{D}(a^{{\ep}}(\cdot;v_0))\in\mathcal{P}(C([0,T],\mathbb{C}^n))$$ 
as ${\ep}\to0$. While $v_0$ is fixed we will usually write  $ a^{{\ep}}(\tau;v_0)$  as  $ a^{{\ep}}(\tau)$.

\begin{lemma}\label{pre-compact1}
Under Assumption \ref{assume-wp1}, the set of probability measures 
$Q_\eps, \,{\ep}\in(0,1]\}$ is  pre-compact 
 in the space $\mathcal{P}(C([0,T],\mathbb{C}^{n}))$ with respect to the weak topology.
 
\end{lemma}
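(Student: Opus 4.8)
The plan is to establish tightness of the family $\{Q_\eps\}$ and then invoke Prokhorov's theorem, which in the Polish space $C([0,T],\C^n)$ identifies tightness with pre-compactness in the weak topology. Recall that by the Arzel\`a--Ascoli theorem a H\"older ball $\{u:\|u\|_\alpha\le R\}$ (see \eqref{Hold}) is a compact subset of $C([0,T],\C^n)$ for every $\alpha\in(0,1)$ and $R>0$. Hence it suffices to produce, for some fixed $\alpha\in(0,1)$ and some exponent $q\ge1$, a bound $\sup_{\eps\in(0,1]}\EE\,\|a^\eps\|_\alpha^q=:K<\infty$: Chebyshev's inequality then gives $Q_\eps(\{\|u\|_\alpha>R\})\le KR^{-q}$ uniformly in $\eps$, so the family is tight.

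To obtain such a H\"older moment bound I would use the Kolmogorov--Chentsov continuity criterion, for which the main task is to estimate the increments $a^\eps(\tau')-a^\eps(\tau)$ via the integral form of \eqref{a-equation2},
$$
a^\eps(\tau')-a^\eps(\tau)=\int_\tau^{\tau'}\Phi_{s\eps^{-1}\Lambda}P(\Phi_{-s\eps^{-1}\Lambda}a^\eps(s))\,ds+\int_\tau^{\tau'}\Phi_{s\eps^{-1}\Lambda}\Psi\,d\beta^c(s).
$$
The decisive observation, which renders every estimate uniform in $\eps$, is that each $\Phi_w$ is unitary and hence preserves both the Euclidean norm on $\C^n$ and the Hilbert--Schmidt norm of matrices. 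For the drift, unitarity gives $|\Phi_{s\eps^{-1}\Lambda}P(\Phi_{-s\eps^{-1}\Lambda}a^\eps(s))|=|P(\Phi_{-s\eps^{-1}\Lambda}a^\eps(s))|$, and since $|\Phi_{-s\eps^{-1}\Lambda}a^\eps(s)|=|a^\eps(s)|$, the bound \eqref{nota} for $P\in\text{Lip}_{m_0}$ yields $|\,\cdot\,|\le\cC^{m_0}(P)(1+|a^\eps(s)|)^{m_0}$, all $\eps$-dependence having disappeared. Thus the drift increment is at most $|\tau'-\tau|\,\cC^{m_0}(P)(1+\sup_s|a^\eps(s)|)^{m_0}$. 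For the stochastic term the Burkholder--Davis--Gundy inequality together with $|\Phi_{s\eps^{-1}\Lambda}\Psi|_{\mathrm{HS}}=|\Psi|_{\mathrm{HS}}$ bounds its $q$-th moment by $C_q|\Psi|_{\mathrm{HS}}^q|\tau'-\tau|^{q/2}$.

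Combining the two contributions and taking expectations, I would choose $q>2$ with $q\le 2m_0'/(m_0\vee1)$, which is possible precisely because $m_0'>m_0\vee1$. Then the moment bound \eqref{moment1} (valid verbatim for $a^\eps$ since $|a^\eps(\tau)|=|v^\eps(\tau)|$, see the remark after \eqref{same}) controls $\EE(1+\sup_s|a^\eps(s)|)^{qm_0}$ uniformly in $\eps$, giving
$$
\EE\,|a^\eps(\tau')-a^\eps(\tau)|^q\le C\,|\tau'-\tau|^{q/2}\qquad\text{for all }0\le\tau<\tau'\le T,\ \eps\in(0,1],
$$
with $C$ independent of $\eps$. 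Since $q/2>1$, the Kolmogorov--Chentsov criterion furnishes a uniform bound on the expected H\"older seminorm of $a^\eps$ for any $\alpha<\tfrac12-\tfrac1q$; together with \eqref{moment1}, which bounds $\EE\sup_\tau|a^\eps(\tau)|^q$ and hence the supremum part of $\|a^\eps\|_\alpha$, this yields $\sup_\eps\EE\,\|a^\eps\|_\alpha^q<\infty$ and completes the argument.

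The only real subtlety is the source of the $\eps$-uniformity: a naive glance at \eqref{a-equation2} suggests that the fast factors $\Phi_{s\eps^{-1}\Lambda}$ might blow up the estimates as $\eps\to0$, and the entire point is that their unitarity makes them invisible to every norm estimate, so the fast oscillations affect only the phases and not the moduli. Once this is recognized, no Gronwall-type feedback is needed and the bounds are a direct consequence of \eqref{nota}, \eqref{moment1} and the Burkholder--Davis--Gundy inequality.
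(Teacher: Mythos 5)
Your proof is correct, and its skeleton --- tightness of $\{Q_\eps\}$ concentrated on H\"older balls, which are compact in $C([0,T],\C^n)$ by Arzel\`a--Ascoli, followed by Prokhorov's theorem, with all $\eps$-uniformity coming from the unitarity of the rotations $\Phi_w$ --- is the same as the paper's. The difference lies in how the H\"older control on $a^\eps$ is produced. The paper argues pathwise: since $\Psi$ is constant, the integrand of the noise term $\zeta^\eps(\tau)=\int_0^\tau\Phi_{s\eps^{-1}\Lambda}\Psi\,d\beta^c(s)$ is deterministic, so $\zeta^\eps$ is Gaussian and, by L\'evy's theorem, each component is a fixed ($\eps$-independent) multiple of a standard complex Wiener process; the a.s. H\"older-$1/3$ regularity of Wiener paths then gives tail bounds on $\|\zeta^\eps\|_{1/3}$ uniform in $\eps$, while the drift integral is handled by an $L^1$ bound on $\sup_\tau|V^\eps(\tau)|$ and Chebyshev, yielding probability (not moment) bounds on $\|a^\eps\|_{1/3}$. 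You instead run the moment machinery: Burkholder--Davis--Gundy for the stochastic increment, the $\text{Lip}_{m_0}$ bound \eqref{nota} together with \eqref{moment1} for the drift increment, and then the quantitative Kolmogorov--Chentsov criterion to obtain $\sup_\eps\EE\|a^\eps\|_\alpha^q<\infty$ for some $q>2$ and $\alpha<\tfrac12-\tfrac1q$; your choice of $q$ is consistent, since $q\le 2m_0'/(m_0\vee1)$ guarantees both $qm_0\le 2m_0'$ and $q\le 2m_0'$, and such $q>2$ exists exactly because $m_0'>m_0\vee1$. Your route costs slightly more (one should also remark that, $a^\eps$ having continuous paths, the Kolmogorov modification is indistinguishable from $a^\eps$, so the H\"older moment bound applies to $a^\eps$ itself); what it buys is robustness: nothing in your argument uses that the dispersion is constant or the integrand deterministic, only a uniform bound on its Hilbert--Schmidt norm, so it extends verbatim to state-dependent noise $\cB(v)$ as in Section~\ref{s-ef-eq}, where the paper's L\'evy-theorem identification of the noise as a rescaled Wiener process is unavailable.
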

\begin{proof}
Let us denote  the random force in equation \eqref{a-equation2} by 
$d\zeta^{{\ep}}(\tau):=\Phi_{\tau{\ep}^{-1}\Lambda}\Psi d\beta^c(\tau)$, where $\zeta^{\ep}(\tau)=(\zeta_l^{\ep}(\tau),\, l=1,\dots,n_1)$.
For any $k$ we have 
$$
\zeta^{{\ep}}_k(\tau)=\int_0^\tau d\zeta_k^{{\ep}}=\int_0^\tau e^{i s{\ep}^{-1}\lambda_k}\sum_{l=1}^{n_1}\Psi_{kl}d\beta^c_l(s).
$$
So $\zeta^{{\ep}}(\tau)$ is a stochastic integral of a non-random vector function. Hence, it is a Gaussian random process with zero
mean value,  and  its increments over non-intersecting time-intervals are independent.
 For each $k$, 
 $$
 {\EE}|\zeta_k^{\ep}(\tau)|^2=\int_0^\tau2 \sum_{l=1}^{n_1}|\Psi_{kl}|^2 ds=:2C_k^\zeta \tau,
 \qquad C_k^\zeta  =  \sum_{l=1}^{n_1}|\Psi_{kl}|^2 \ge0,
 $$
and 
${\EE}\zeta_k^{\ep}(\tau)\zeta_j^{\ep}(\tau)={\EE}\bar\zeta_k^{\ep}(\tau)\bar\zeta_j^{\ep}(\tau)=0.$
Therefore   $\zeta_k^{\ep}(\tau)=C_k^\zeta \beta^c_k(\tau)$,  where by
   L\'evy's theorem (see \cite[p.~157]{brownianbook})
  $\beta^c_k(\tau)$ is a standard complex Wiener process.  However, the processes $\zeta^{\ep}_j$ and $\zeta^{\ep}_k$ with  $j\neq k$ are not necessarily  independent.

By the basic properties of Wiener process,  a.s. the curve $[0,T]\ni\tau\mapsto\zeta^{\ep}(\omega,\tau)\in\mathbb{C}^n$ is H\"older-continuous with exponent $\tfrac{1}{3}$, and  since $C_k^\zeta $ does not depend on ${\ep}$, then
abbreviating $ (C^{1/3}([0,T], \C^n)$ to $ C^{1/3}$ we have 
$$
{\PP}\big(\zeta^{\ep}(\cdot)\in \BR(C^{1/3})\big)\to1\;\; \text{as}\;\; R\to\infty, 
$$
uniformly in ${\ep}$. 
Let us write  equation \eqref{a-equation2} as
$$da^{{\ep}}(\tau)=V^{{\ep}}(\tau)d\tau+d\zeta^{{\ep}}(\tau).$$
By Assumption \ref{assume-wp1} and  since $|a^{\ep}(\tau)|\equiv |v^{\ep}(\tau)|$ we have that
\[
{\EE}\sup_{\tau\in[0,T]}|V^{\ep}(\tau)|\leqslant \cC^{m_0}\!(P){\EE} 
(1+\sup_{\tau\in[0,T]}|v^{\ep}(\tau)|)^{m_0}\leqslant
  C(|v_0|)<\infty.\]
Therefore, by   Chebyshev's  inequality, \[
\mathbf{P}(\sup_{0\leqslant \tau\leqslant T}|V^{{\ep}}(\tau)| > R)\leqslant C(|v_0|) R^{-1},
\]
 uniformly in ${\ep}\in(0,1]$. 
Since $$a^{{\ep}}(\tau)=v_0+\int_0^\tau V^{{\ep}}(s)ds+\zeta^{{\ep}}(\tau),$$
 then we get from the above that
\begin{equation}\label{compact-up1}\mathbf{P}\big(\|a^{{\ep}}(\cdot)\|_{1/3} > R\big)\to0\quad \text{as}\quad R\to\infty, \end{equation}
uniformly in ${\ep}\in(0,1]$.  By the Ascoli-Arzela theorem  the sets  $\BR(C^{1/3})$ are  compact in $C([0,T];\mathbb{ C}^n)$,
and in view of  \eqref{compact-up1} for any $\delta>0$ there exists $R_\delta$ such that
$$
Q_\eps  \big(\bar B_{R_{\delta}}(C^{1/3})\big)\geqslant1-\delta,\quad \forall {\ep}>0.
$$
So by   Prokhorov's  theorem  the set of measures
 $\{Q_\eps,0<{\ep}\leqslant1\}$ is pre-compact  in $\mathcal{P}(C[0,T],\mathbb{C}^n)$.
\end{proof}

By this lemma, for any sequence ${\ep}_l\to0$ there exists a subsequence ${\ep}_{l}'\to0$ and a measure $Q_0\in\mathcal{P}(C([0,T],\mathbb{C}^n))$ such that
\be\label{precomp}
Q_{{\ep}_l'}  \rightharpoonup Q_0\;\;\text{ as }\; {\ep}_l'\to0.
\ee

\section{Averaging of vector fields  with respect to the frequency vector}\label{s_3}
For a vector field  ${\tilde P}\in \text{Lip}_{m_0}(\mathbb{C}^n,\mathbb{C}^n)$  we
denote
$$
Y_{\tilde P}(a;t)= \big( (\Phi_{t\Lambda })_*\tilde P\big) (a)=
\Phi_{t\Lambda }\circ {\tilde P}(\Phi_{-t\Lambda }a),\quad a\in\mathbb{C}^n,\; t\in\mathbb{R},
$$
and for $T'>0$  define   {\it partial averaging}  $\llangle {\tilde P}\rrangle^{T'}$ 
of the vector field ${\tilde P}$ with respect to the frequency vector $\Lambda$ as follows:
\be\label{part_aver}
\llangle {\tilde P}\rrangle^{T'}(a)= \frac{1}{T'}\int_0^{T'}Y_{\tilde P}(a;t)dt =  \frac{1}{T'}\int_0^{T'}  \Phi_{t\Lambda }\circ {\tilde P}(\Phi_{-t\Lambda }a) dt.
\ee

\begin{lemma}For any ${T'}>0$, $\llangle {\tilde P}\rrangle^{T'}(a)\in \text{Lip}_{m_0}(\mathbb{C}^n,\mathbb{C}^n)$ and
$
\cC^{m_0} (\llangle {\tilde P}\rrangle^{T'}) \le \cC^{m_0} ({\tilde P})
$
(see \eqref{nota}).
\end{lemma}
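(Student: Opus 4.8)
The plan is to reduce everything to the single structural fact, already recorded above, that each rotation $\Phi_w$ is unitary, hence norm-preserving and therefore maps every closed ball $\bar B_R(\C^n)$ bijectively onto itself. From this I would first show that for each fixed $t$ the rotated field $Y_{\tilde P}(\cdot;t)=\Phi_{t\Lambda}\circ\tilde P\circ\Phi_{-t\Lambda}$ obeys, on every ball $\bar B_R(\C^n)$, the same sup bound and the same Lipschitz bound as $\tilde P$, uniformly in $t$; then I would transfer these bounds to the $t$-average by the triangle inequality for integrals.

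First I would fix $R\ge1$ and $t\in\R$. For $a\in\bar B_R(\C^n)$ we have $\Phi_{-t\Lambda}a\in\bar B_R(\C^n)$, so
$$
|Y_{\tilde P}(a;t)|=|\Phi_{t\Lambda}\tilde P(\Phi_{-t\Lambda}a)|=|\tilde P(\Phi_{-t\Lambda}a)|\le\sup_{\bar B_R(\C^n)}|\tilde P|,
$$
which gives $\sup_{\bar B_R(\C^n)}|Y_{\tilde P}(\cdot;t)|\le\sup_{\bar B_R(\C^n)}|\tilde P|$. Likewise, for $a,a'\in\bar B_R(\C^n)$, using that $\Phi_{\pm t\Lambda}$ are unitary and preserve $\bar B_R(\C^n)$,
$$
|Y_{\tilde P}(a;t)-Y_{\tilde P}(a';t)|=|\tilde P(\Phi_{-t\Lambda}a)-\tilde P(\Phi_{-t\Lambda}a')|\le\text{Lip}(\tilde P|_{\bar B_R(\C^n)})\,|\Phi_{-t\Lambda}(a-a')|=\text{Lip}(\tilde P|_{\bar B_R(\C^n)})\,|a-a'|,
$$
so $\text{Lip}(Y_{\tilde P}(\cdot;t)|_{\bar B_R(\C^n)})\le\text{Lip}(\tilde P|_{\bar B_R(\C^n)})$.

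Next I would pass to $\llangle\tilde P\rrangle^{T'}(a)=\tfrac{1}{T'}\int_0^{T'}Y_{\tilde P}(a;t)\,dt$. The integrand is continuous in $t$ (as $\tilde P$ is continuous and $\Phi_{t\Lambda}$ depends continuously on $t$), so the integral is well defined. For $a\in\bar B_R(\C^n)$,
$$
\Big|\tfrac{1}{T'}\int_0^{T'}Y_{\tilde P}(a;t)\,dt\Big|\le\tfrac{1}{T'}\int_0^{T'}|Y_{\tilde P}(a;t)|\,dt\le\sup_{\bar B_R(\C^n)}|\tilde P|,
$$
and for $a,a'\in\bar B_R(\C^n)$,
$$
|\llangle\tilde P\rrangle^{T'}(a)-\llangle\tilde P\rrangle^{T'}(a')|\le\tfrac{1}{T'}\int_0^{T'}|Y_{\tilde P}(a;t)-Y_{\tilde P}(a';t)|\,dt\le\text{Lip}(\tilde P|_{\bar B_R(\C^n)})\,|a-a'|.
$$
Adding these two bounds, multiplying by $(1+R)^{-m_0}$, and taking the supremum over $R\ge1$ yields $\cC^{m_0}(\llangle\tilde P\rrangle^{T'})\le\cC^{m_0}(\tilde P)<\infty$, which simultaneously proves membership in $\text{Lip}_{m_0}(\C^n,\C^n)$ and the claimed inequality.

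There is no genuine obstacle in this argument: everything rests on the unitarity of $\Phi_w$, which leaves both the local sup bound and the local Lipschitz constant of a field invariant under conjugation, after which averaging in $t$ can only decrease these quantities. The only point deserving a word is the interchange of $\sup_{\bar B_R(\C^n)}$ (respectively of the Lipschitz difference quotient) with the time integral, which is immediate precisely because the per-$t$ bounds on $Y_{\tilde P}(\cdot;t)$ hold uniformly in $t$ before integrating.
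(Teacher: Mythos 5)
Your proof is correct and takes essentially the same route as the paper's: both use the unitarity of the rotations $\Phi_{t\Lambda}$ (so that they preserve each ball $\bar B_R(\C^n)$ and the norm) to bound $|Y_{\tilde P}(\cdot;t)|$ and its Lipschitz constant on $\bar B_R(\C^n)$ uniformly in $t$ by the corresponding quantities for $\tilde P$, and then transfer these bounds to the average by the triangle inequality for integrals. Your version merely spells out slightly more explicitly the final step of summing the two local bounds, multiplying by $(1+R)^{-m_0}$ and taking the supremum over $R\ge 1$ to obtain $\cC^{m_0}(\llangle\tilde P\rrangle^{T'})\le\cC^{m_0}(\tilde P)$.
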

\begin{proof}
 If $a\in   \BR(\C^n)$, then $\Phi_{-t\Lambda }a\in  \BR(\C^n)$ for each $t$.
 So
$
 |Y_{\tilde P}(a;t)|=|(\Phi_{t\Lambda})_*  {\tilde P}(a)|=|{\tilde P}(\Phi_{-t\Lambda }a))|,
$
and from here
$$
|\llangle {\tilde P}\rrangle^{T'}(a)|\leqslant \sup_{0\le t \le {T'}}|Y_{\tilde P}(a;t)|\leqslant \cC^{m_0} ({\tilde P}) (1+ R)^{m_0}.
$$
Similarly, for any $a_1,a_2\in  \BR(\C^n)$,
$$
|Y_{\tilde P}(a_1;t)-Y_{\tilde P}(a_2;t)|=|{\tilde P}(\Phi_{-t\Lambda }a_1)-{\tilde P}(\Phi_{-t\Lambda }a_2)| 
\leqslant \cC^{m_0}({\tilde P})(1+R)^{m_0}|a_2-a_1|\quad  \forall t\geqslant0,
$$
 so that
\[|\llangle {\tilde P}\rrangle^{T'}(a_1)-\llangle {\tilde P}\rrangle^{T'}(a_1)|\leqslant \cC^{m_0}({\tilde P}) (1+R)^{m_0}|a_1-a_1|.\]
This proves the assertion.
\end{proof}

Now we define 
  {\it  averaging}  of the  vector field ${\tilde P}$ with respect to the frequency vector $\Lambda$ as

\begin{equation}\label{P-averaging}
\llangle {\tilde P}\rrangle(a)=\lim_{{T'}\to\infty}\llangle {\tilde P}\rrangle^{T'}(a)=
\lim_{{T'}\to\infty}\frac{1}{{T'}}\int_0^{T'} (\Phi_{t\Lambda })_* {\tilde P}(a)dt.
\end{equation}

\begin{lemma}\label{average-lm}
\begin{enumerate}
\item The limit \eqref{P-averaging} exists for any $a$. Moreover, 
 $\llangle {\tilde P}\rrangle$ belongs to $\text{Lip}_{m_0}(\mathbb{C}^n,\mathbb{C}^n)$  and $\cC^{m_0}( \llangle {\tilde P}\rrangle ) \le
\cC^{m_0}({\tilde P})$.
\item If $a\in   \BR(\C^n)$, then the rate of convergence in \eqref{P-averaging} depends not on $a$, but only on $R$.
\end{enumerate}
\end{lemma}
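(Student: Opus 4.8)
The plan is to recognize $Y_{\tilde P}(a;\cdot)$ as the restriction of a fixed continuous function on a torus along a linear flow, and to extract both the existence of the limit and the uniform rate from the ergodicity of that flow. First I would observe that
\[
Y_{\tilde P}(a;t)=G_a(t\Lambda),\qquad G_a(\theta):=\Phi_{\theta}\,\tilde P(\Phi_{-\theta}a),\quad \theta\in\R^n .
\]
Since each $\Phi_\theta$ depends on $\theta$ only modulo $2\pi\Z^n$, the map $G_a$ descends to a continuous map $G_a\colon \mathbb{T}^n\to\C^n$ on $\mathbb{T}^n=\R^n/2\pi\Z^n$, and $t\mapsto t\Lambda$ is the linear flow on $\mathbb{T}^n$ of velocity $\Lambda$, so that $\llangle \tilde P\rrangle^{T'}(a)=\tfrac1{T'}\int_0^{T'}G_a(t\Lambda)\,dt$.

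For the existence of the limit in part (1) I would invoke the ergodic theory of this flow. Let $\mathbb{T}_\Lambda:=\overline{\{\,t\Lambda \bmod 2\pi\Z^n : t\in\R\,\}}$ be the orbit closure, a closed connected subgroup of $\mathbb{T}^n$. The translation flow by $\Lambda$ is minimal on $\mathbb{T}_\Lambda$, hence (Kronecker--Weyl) uniquely ergodic, with normalized Haar measure $\mu_\Lambda$ as its only invariant probability measure. Applied to the continuous function $G_a|_{\mathbb{T}_\Lambda}$, this gives
\[
\llangle \tilde P\rrangle^{T'}(a)\xrightarrow[T'\to\infty]{}\int_{\mathbb{T}_\Lambda}G_a\,d\mu_\Lambda=:\llangle \tilde P\rrangle(a)
\]
for every $a$. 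The remaining assertions of (1), namely $\llangle \tilde P\rrangle\in\text{Lip}_{m_0}(\C^n,\C^n)$ with $\cC^{m_0}(\llangle \tilde P\rrangle)\le \cC^{m_0}(\tilde P)$, then follow simply by passing to the limit in the bounds of the preceding lemma, which hold uniformly in $T'$.

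The delicate point is part (2), uniformity of the rate over $a\in\BR(\C^n)$. Here I would first check that $\{\,G_a|_{\mathbb{T}_\Lambda}:a\in\BR(\C^n)\,\}$ is uniformly bounded and equicontinuous in $C(\mathbb{T}_\Lambda,\C^n)$: the bound $|G_a(\theta)|=|\tilde P(\Phi_{-\theta}a)|\le \cC^{m_0}(\tilde P)(1+R)^{m_0}$ uses $\Phi_{-\theta}a\in\BR(\C^n)$, while equicontinuity with a modulus depending only on $R$ follows from the Lipschitz bound for $\tilde P$ on $\BR(\C^n)$ and the $a$-uniform continuity of $\theta\mapsto(\Phi_\theta,\Phi_{-\theta}a)$ (in fact the family is equi-Lipschitz in $\theta$). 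By Arzel\`a--Ascoli this family is precompact; let $\cK$ be its closure, a compact set. The functionals
\[
L_{T'}(g):=\frac1{T'}\int_0^{T'}g(t\Lambda)\,dt-\int_{\mathbb{T}_\Lambda}g\,d\mu_\Lambda,\qquad g\in C(\mathbb{T}_\Lambda,\C^n),
\]
are linear with $\|L_{T'}\|\le2$ and, by the previous step, tend to $0$ pointwise. Since $\llangle \tilde P\rrangle^{T'}(a)-\llangle \tilde P\rrangle(a)=L_{T'}(G_a|_{\mathbb{T}_\Lambda})$ with $G_a|_{\mathbb{T}_\Lambda}\in\cK$ for all $a\in\BR(\C^n)$, a rate depending only on $R$ will follow once I show $\sup_{g\in\cK}|L_{T'}(g)|\to0$.

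The main obstacle is exactly this last upgrade: the ergodic theorem alone yields, for each fixed $a$, a rate that could a priori depend on $a$ through the modulus of continuity of $G_a$. I would resolve it by the elementary fact that a uniformly bounded family of linear functionals converging pointwise converges uniformly on compact sets: covering $\cK$ by finitely many $\delta$-balls about points $g_1,\dots,g_M$ and using $\|L_{T'}\|\le2$, any $g\in\cK$ satisfies $|L_{T'}(g)|\le \max_i|L_{T'}(g_i)|+2\delta$, whence $\sup_{g\in\cK}|L_{T'}(g)|\to0$. An alternative, more hands-on route to both parts is to approximate $G_a$ uniformly by trigonometric polynomials and average the characters $e^{i\langle s,t\Lambda\rangle}$ explicitly (these converge to $\mathbf 1[\langle s,\Lambda\rangle=0]$ with error $O(1/T')$); but the same equicontinuity of $\{G_a\}$ is needed there to make the approximation, and hence the rate, uniform in $a$.
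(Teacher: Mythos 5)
Your proof is correct, but it takes a genuinely different route from the paper, which in fact gives no argument of its own for this lemma and simply cites Lemma~3.1 of \cite{JKW}. The proof behind that citation (as the paper's own Section~\ref{ss_calculating} indicates) is computational: one first treats polynomial vector fields, for which the partial averages reduce to averages of exponentials $e^{it\Lambda\cdot(\alpha-\beta)}$ that are evaluated explicitly with rate $O(1/T')$ uniformly on balls, and then approximates a general locally Lipschitz field by polynomials uniformly on $\bar B_R$. You replace this by soft ergodic theory: you realize $Y_{\tilde P}(a;\cdot)$ as the restriction of the continuous torus function $G_a$ along the linear flow, invoke unique ergodicity (Kronecker--Weyl) of that flow on the orbit-closure subtorus $\mathbb{T}_\Lambda$ to get existence of the limit at every point, and then obtain the uniform rate on $\bar B_R$ from the equi-Lipschitz property of $\{G_a\}_{a\in\bar B_R}$, Arzel\`a--Ascoli, and the elementary fact that equibounded linear functionals converging pointwise converge uniformly on compacta. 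All the steps check out: your equicontinuity constant indeed depends only on $R$ and $\cC^{m_0}(\tilde P)$; unique ergodicity does give everywhere (not merely almost-everywhere) convergence for continuous observables, which is exactly what is needed at the single orbit through $0$; and the $\delta$-net argument is valid for the continuum family $\{L_{T'}\}$. What each approach buys is different. Yours is computation-free, identifies the limit intrinsically as the Haar average $\int_{\mathbb{T}_\Lambda}G_a\,d\mu_\Lambda$ (recovering \eqref{relation} when $\Lambda$ is non-resonant and \eqref{w0} in the completely resonant case, where $\mathbb{T}_\Lambda$ is the full torus, respectively a circle), and extends verbatim to arbitrary locally Lipschitz fields, as claimed in the paper's footnote. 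The polynomial route, by contrast, yields the explicit resonant formula $\llangle\tilde P\rrangle=\tilde P^{res}$ of item 3) of Section~\ref{ss_calculating}, on which the paper relies later (e.g.\ in Proposition~\ref{C-m-smooth-average} and for writing effective equations explicitly), and gives a quantitative $O(1/T')$ rate in the polynomial case, whereas your compactness argument is purely qualitative.
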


This is the main lemma of  deterministic averaging for vector fields.  For its proof  see
 \cite[Lemma~3.1]{JKW}.\footnote{ In fact, 
 if $\tilde P$ is any  locally Lipschitz vector field, then $\llangle {\tilde P}\rrangle$ is well defined and locally Lipschitz with the same 
locally  Lipschitz constant (see discussion after Assumption~\ref{assume-wp1})  as $\tilde P$. See in \cite{JKW}.}
\smallskip

The averaged vector field~$\llangle {\tilde P}\rrangle$ is invariant with respect to transformations $\Phi_{\theta\Lambda}$:
\begin{lemma}\label{average-invariant}
For all $a\in \C^n$ and $\theta \in \R$,
$$
\big(\Phi_{\theta\Lambda}\big)_*  \llangle {\tilde P} \rrangle (a) \equiv
 \Phi_{\theta\Lambda } \circ \llangle {\tilde P}\rrangle\circ\Phi_{-\theta\Lambda }(a)=  \llangle {\tilde P} \rrangle (a) .
$$
\end{lemma}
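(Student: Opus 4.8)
The plan is to treat $\llangle \tilde P \rrangle$ as a time-average along the one-parameter group $\{\Phi_{t\Lambda}\}_{t\in\R}$ and to exploit the elementary invariance of such averages under the group action. Concretely, since $\Phi_{w_1}\circ\Phi_{w_2}=\Phi_{w_1+w_2}$, the required identity should follow by unfolding the definition \eqref{P-averaging}, using the group law to absorb the two extra factors $\Phi_{\pm\theta\Lambda}$ into a shift of the integration parameter $t\mapsto t+\theta$, and then checking that this shift perturbs the average only by boundary contributions that vanish in the limit $T'\to\infty$.

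First I would compute, for fixed $a$ and $\theta$,
$$\Phi_{\theta\Lambda}\circ\llangle\tilde P\rrangle\circ\Phi_{-\theta\Lambda}(a)=\lim_{T'\to\infty}\frac1{T'}\int_0^{T'}\Phi_{\theta\Lambda}\,\Phi_{t\Lambda}\circ\tilde P\big(\Phi_{-t\Lambda}\Phi_{-\theta\Lambda}a\big)\,dt.$$
Applying the group law $\Phi_{\theta\Lambda}\Phi_{t\Lambda}=\Phi_{(t+\theta)\Lambda}$ and $\Phi_{-t\Lambda}\Phi_{-\theta\Lambda}=\Phi_{-(t+\theta)\Lambda}$, the integrand becomes exactly $Y_{\tilde P}(a;t+\theta)$. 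Substituting $s=t+\theta$ turns the integral into $\frac1{T'}\int_\theta^{T'+\theta}Y_{\tilde P}(a;s)\,ds$, which I then compare with $\llangle\tilde P\rrangle(a)=\lim_{T'\to\infty}\frac1{T'}\int_0^{T'}Y_{\tilde P}(a;s)\,ds$.

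It then remains to show that the two averages have the same limit. Their difference equals $\frac1{T'}\big(\int_{T'}^{T'+\theta}-\int_0^{\theta}\big)Y_{\tilde P}(a;s)\,ds$, so the whole point is a uniform-in-$s$ bound on $|Y_{\tilde P}(a;s)|$. This is where norm-preservation of the rotations enters: for $a\in\bar B_R(\C^n)$ one has $\Phi_{-s\Lambda}a\in\bar B_R(\C^n)$ for all $s$, hence $|Y_{\tilde P}(a;s)|=|\tilde P(\Phi_{-s\Lambda}a)|\le\cC^{m_0}(\tilde P)(1+R)^{m_0}$ independently of $s$. Consequently each boundary integral is bounded by $|\theta|\,\cC^{m_0}(\tilde P)(1+R)^{m_0}$, a constant free of $T'$, and the difference is $O(1/T')\to0$. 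Since the limit defining $\llangle\tilde P\rrangle$ exists by Lemma~\ref{average-lm}, both sides converge to the same value, which is the claim. The argument has no genuine obstacle; the only point requiring care is precisely this uniform bound, for which the crucial input is that $\Phi_{-s\Lambda}$ preserves the norm, so that the trajectory $s\mapsto\Phi_{-s\Lambda}a$ never leaves the ball on which $\tilde P$ is controlled.
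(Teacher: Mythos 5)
Your proof is correct and follows essentially the same route as the paper: unfold the definition of the averaging, use the group law $\Phi_{w_1}\circ\Phi_{w_2}=\Phi_{w_1+w_2}$ to turn the conjugated integrand into $Y_{\tilde P}(a;t+\theta)$, shift the integration variable, and observe that the resulting boundary contributions are $O(1/T')$ and vanish in the limit. The only difference is cosmetic: the paper works with the partial averages $\llangle\tilde P\rrangle^{T'}$ and writes the boundary terms simply as $O(1/T')$, whereas you justify that bound explicitly via the norm-preservation of $\Phi_{-s\Lambda}$ and the Lip$_{m_0}$ estimate on $\tilde P$ — a detail the paper leaves implicit.
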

\begin{proof}
For definiteness let $\theta>0$. For any ${T'}>0$ we have:
\[\llangle {\tilde P}\rrangle^{T'}(\Phi_{-\theta\Lambda}(a))=\frac{1}{{T'}}\int_0^{T'}\Phi_{t\Lambda }\circ {\tilde P}(\Phi_{-t\Lambda }\circ \Phi_{-\theta\Lambda}(a))dt=\frac{1}{{T'}}\int_0^{{T'}}\Phi_{t\Lambda }\circ {\tilde P}(\Phi_{-(t+\theta)\Lambda}a)dt.\]
Since  $\Phi_{t\Lambda }=\Phi_{-\theta\Lambda}\circ \Phi_{(t+\theta)\Lambda}$, then this equals
\[
\frac{1}{{T'}}\Phi_{-\theta\Lambda}\Big(\int_0^{T'}\Phi_{ (t+\theta)\Lambda}\circ {\tilde P}(\Phi_{-(t+\theta)\Lambda}a)dt\Big)
=\Phi_{-\theta\Lambda }\circ \llangle {\tilde P}\rrangle^{{T'}}(a)+O\Big(\frac{1}{{T'}}\Big).
\]
Passing to the limit as ${T'}\to\infty$ we obtain the assertion.
\end{proof}

The  statement below asserts that the averaged vector filed $\llangle {P}\rrangle$ is as least as smooth as ${P}$. 
\begin{proposition} \label{C-m-smooth-average}If ${P}\in C^m(\mathbb{C}^n)$ for some $m\in\mathbb{N}$, then $\llangle{P}\rrangle\in C^m(\mathbb{C}^n)$ and $|\llangle {P}\rrangle|_{C^m(\bar B_R)}\leqslant | {P}|_{C^m(\bar B_R)}$, $\forall R>0$.
\end{proposition}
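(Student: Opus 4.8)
The plan is to control every derivative of the $t$-dependent field $Y_P(\cdot;t)$ uniformly in $t$, to push these bounds through the integral defining the partial average $\llangle P\rrangle^{T'}$, and finally to pass to the limit $T'\to\infty$ while keeping track of derivatives, not merely of the functions themselves. First I would differentiate $Y_P(a;t)=\Phi_{t\Lambda}\circ P\circ\Phi_{-t\Lambda}(a)$. Since $\Phi_{\pm t\Lambda}$ are constant (real\nobreakdash-)linear operators, the chain rule gives, for every $k\le m$,
\[
D^k Y_P(a;t)=\Phi_{t\Lambda}\circ D^k P(\Phi_{-t\Lambda}a)\circ(\Phi_{-t\Lambda})^{\otimes k}.
\]
Each $\Phi_{\pm t\Lambda}$ is unitary, hence an isometry; precomposing the $k$ input slots with $\Phi_{-t\Lambda}$ and postcomposing the output with $\Phi_{t\Lambda}$ therefore leaves the operator norm of this $k$-linear map unchanged, so $|D^k Y_P(a;t)|=|D^k P(\Phi_{-t\Lambda}a)|$. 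As $\Phi_{-t\Lambda}$ maps $\bar B_R$ onto itself, this yields $\sup_{a\in\bar B_R}|D^k Y_P(a;t)|\le |P|_{C^m(\bar B_R)}$ for all $t$ and all $k\le m$; in particular each $Y_P(\cdot;t)$ lies in $C^m$ with $C^m(\bar B_R)$-norm bounded by $|P|_{C^m(\bar B_R)}$, uniformly in $t$.

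Next, the integrand $Y_P(a;t)$ and its $a$-derivatives up to order $m$ are jointly continuous in $(a,t)$ and uniformly bounded on $\bar B_R\times[0,T']$, so differentiation under the integral sign is justified and $D^k\llangle P\rrangle^{T'}(a)=\frac1{T'}\int_0^{T'}D^k Y_P(a;t)\,dt$ for $k\le m$. Averaging the bound from the first paragraph gives $|\llangle P\rrangle^{T'}|_{C^m(\bar B_R)}\le |P|_{C^m(\bar B_R)}$, uniformly in $T'$.

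The remaining, and main, step is to upgrade the convergence $\llangle P\rrangle^{T'}\to\llangle P\rrangle$ to convergence of all derivatives up to order $m$, uniformly on each $\bar B_R$. The key observation is that $D^k Y_P(\cdot;t)$, viewed as a map into the finite-dimensional space $L_k$ of $k$-linear maps $\mathbb{C}^n\to\mathbb{C}^n$, has exactly the structure of $Y_P$ itself: by the displayed formula it is the push-forward of the $L_k$-valued map $D^k P$ under the one-parameter group $\Phi_{t\Lambda}$, which acts on $L_k$ (on the output and on the $k$ inputs) by isometries. Consequently the averaging machinery of Lemma~\ref{average-lm} applies verbatim to $D^k P$: the time averages $\frac1{T'}\int_0^{T'}D^k Y_P(\cdot;t)\,dt=D^k\llangle P\rrangle^{T'}$ converge as $T'\to\infty$, with a rate depending only on $R$ on each $\bar B_R$ (part~(2) of that lemma). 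Having uniform-on-balls convergence of $\llangle P\rrangle^{T'}$ together with each of its derivatives up to order $m$, the standard theorem on differentiating a uniform limit shows that $\llangle P\rrangle\in C^m$ and $D^k\llangle P\rrangle=\lim_{T'\to\infty}D^k\llangle P\rrangle^{T'}$; passing the bound of the previous paragraph to the limit gives $|\llangle P\rrangle|_{C^m(\bar B_R)}\le |P|_{C^m(\bar B_R)}$.

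I expect the genuine obstacle to lie in this last step: the uniform $C^m$-bound alone does \emph{not} transfer smoothness to the limit, so one must actually establish convergence of the derivatives and not only of the functions. The clean route is to recognise $D^k P$ as a vector-field-like object valued in the normed space $L_k$, on which $\Phi_{t\Lambda}$ still acts isometrically, so that Lemma~\ref{average-lm}, including its uniform-rate assertion, carries over without modification. The one point that must be checked carefully is precisely that this induced action preserves the relevant operator norm on $L_k$, which is what keeps the constants in the extended lemma independent of both $k$ and $t$.
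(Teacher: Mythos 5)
Your proposal is correct in substance, but it follows a genuinely different route from the paper. The paper's proof fixes $R>0$, approximates $P$ by polynomial vector fields $P_{R,j}$ in $C^m(\bar B_R)$, observes via an explicit (resonance) computation that for polynomial fields the partial averages converge to the full average in $C^m(\bar B_R)$ as $T\to\infty$ (its relation \eqref{easyc}), and combines this with the contraction property $|\llangle \tilde P\rrangle^T|_{C^m(\bar B_R)}\le |\tilde P|_{C^m(\bar B_R)}$ (its relation \eqref{33}, which is exactly the content of your first two paragraphs) to run a Cauchy-sequence argument in $C^m(\bar B_R)$; the $C^m$-limit of $\llangle P_{R,j}\rrangle$ is then identified with $\llangle P\rrangle$ using only the pointwise statement of Lemma~\ref{average-lm}. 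You instead keep $P$ general and apply the averaging lemma to each derivative $D^kP$, viewed as a field valued in the space $L_k$ of $k$-linear maps, on which $\Phi_{t\Lambda}$ acts by isometries, and you finish with the classical theorem on differentiating uniform limits. This is a valid strategy and arguably more structural --- it yields directly that the derivative of the average is the average of the derivative, with rates uniform on balls --- but your claim that Lemma~\ref{average-lm} applies ``verbatim'' overstates things: as stated, that lemma concerns fields $\tilde P:\mathbb{C}^n\to\mathbb{C}^n$ with the push-forward action, whereas $D^kP$ takes values in $L_k$ and transforms by $M\mapsto \Phi_{t\Lambda}\circ M\circ (\Phi_{-t\Lambda})^{\otimes k}$, so you must re-prove, not merely invoke, the lemma in this wider setting. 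The extension is true --- the induced action is again a quasi-periodic family of isometries, diagonalizable with unimodular eigenvalues $e^{it\mu}$ where $\mu$ runs over integer combinations of the $\lambda_j$, so the polynomial-approximation (or unique-ergodicity) proof of the lemma carries over --- and you correctly isolate the isometry property as the point to check; but this re-proof is the real work in your approach, and it is precisely the step that the paper's polynomial/Cauchy argument is designed to avoid, at the cost of never needing any rate-of-convergence statement at all.
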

\begin{proof}
 We first fix  any  $R>0$. Then there exists a sequence of polynomial vector fields  $\{P_{R,j},j\in\mathbb{N}\}$ (cf.
 below item 3) in Section~\ref{ss_calculating})
  such that 
$|P_{R,j}  -{P} |_{C^m(\bar B_R)}\to0$ as $j\to\infty$.  An easy calculation shows that 
\be\label{easyc}
|\llangle P_{R,j}\rrangle^T - \llangle  P_{R,j}\rrangle|_{C^m(\bar B_R)} \to 0  \quad \text{as} \;\; T\to\infty,
\ee
for each $j$. Since the transformations $\Phi_{t\Lambda}$ are unitary, then differentiating the integral in  \eqref{part_aver} in $a$ we get that 
\be\label{33}
|\llangle{\tilde P}\rrangle^T|_{C^m(\bar B_R)}\leqslant|{ \tilde P}|_{C^m(\bar B_R)}\quad \forall\, T>0, 
\ee
for any $C^m$-vector field $\tilde P$. Therefore
\be\label{22}
|\llangle P_{R,j}\rrangle^T - \llangle  P \rrangle^T|_{C^m(\bar B_R)} \le | P_{R,j} -   P |_{C^m(\bar B_R)} =: \kappa_j 
\to 0  \quad \text{as} \;\; j\to\infty,
\; \;\; \forall\, T>0. 
\ee
So 
$$
|\llangle P_{R,j}\rrangle^T - \llangle  P_{R,k} \rrangle^T|_{C^m(\bar B_R)} \le  2\kappa_{j\wedge k} 
\; \;\; \forall\, T>0. 
$$
From this estimate and \eqref{easyc} we find that 
$
| \llangle P_{R,j}\rrangle - \llangle  P_{R,k} \rrangle|_{C^m(\bar B_R)} \le  2\kappa_{j\wedge k} .
$
Thus $\{  \llangle P_{R,j}\rrangle \}$ is a Cauchy sequence in $C^m(\bar B_R)$. So it $C^m$-converges to a limiting field 
$\llangle P_{R,\infty}\rrangle$. As $P_{R,j}$ converges to $P$ in $C^m(\bar B_R)$, then using again \eqref{33} we find that
$
|\llangle P_{R,\infty}\rrangle  |_{C^m(\bar B_R)} \le | P |_{C^m(\bar B_R)} .
$ 
But by Lemma~\ref{average-lm} $\llangle P_{R,\infty}\rrangle$ must equal $\llangle P \rrangle$. 
Since $R>0$ is arbitrary, the assertion of the proposition follows. 
\end{proof}

Finally we note that if a vector field $P$ is Hamiltonian, then its averaging $ \llangle P \rrangle $ also is. Looking ahead we state the 
corresponding result here, despite the averaging of functions $\lan \cdot\ran$ is defined below in Section~\ref{ss_av_}.

\begin{proposition} 
If a locally Lipschitz vector field $P$ is Hamiltonian, i.e. 
$
P(z) = i \frac{\p}{\p \bar z} H(z)
$
for some $C^1$-function $H$, then $ \llangle P \rrangle $ also is Hamiltonian and 
$
 \llangle P \rrangle =  i \frac{\p}{\p \bar z} \lan  H\ran.
$
\end{proposition}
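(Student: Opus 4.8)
The plan is to reduce the statement to the elementary fact that the unitary (hence holomorphic and symplectic) maps $\Phi_w$ carry Hamiltonian vector fields to Hamiltonian ones, and then to interchange this with the time-averaging that defines $\llangle\cdot\rrangle$.

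First I would prove the pointwise identity
\be\label{key-ham}
(\Phi_w)_* P = i\,\frac{\p}{\p \bar z}\big(H\circ\Phi_{-w}\big), \qquad w\in\R^n.
\ee
This is a one-line Wirtinger computation. Writing $g=H\circ\Phi_{-w}$ and using that each component $(\Phi_{-w}z)_j=e^{-iw_j}z_j$ is holomorphic in $z$, so that $\p(\Phi_{-w}z)_j/\p\bar z_k=0$, while $\overline{(\Phi_{-w}z)_j}=e^{iw_j}\bar z_j$, the chain rule for the operators $\p/\p\bar z_k$ collapses to
$$
\frac{\p g}{\p\bar z_k}(z)=e^{iw_k}\,\frac{\p H}{\p\bar z_k}(\Phi_{-w}z),
$$
whence $i\,\p g/\p\bar z_k(z)=e^{iw_k}P_k(\Phi_{-w}z)=((\Phi_w)_*P)_k(z)$. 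Thus the push-forward of the Hamiltonian field with Hamiltonian $H$ is the Hamiltonian field with Hamiltonian $H\circ\Phi_{-w}$.

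Next, taking $w=t\Lambda$ in \eqref{key-ham} and differentiating under the integral sign (legitimate for a fixed $T'$, since $\p H/\p\bar z$ is continuous and the $t$-integration runs over the compact interval $[0,T']$), I would obtain for every finite $T'>0$
$$
\llangle P\rrangle^{T'}(a)=\frac1{T'}\int_0^{T'}(\Phi_{t\Lambda})_*P(a)\,dt = i\,\frac{\p}{\p\bar a}\Big(\frac1{T'}\int_0^{T'} H(\Phi_{-t\Lambda}a)\,dt\Big)=i\,\frac{\p}{\p\bar a}\lan H\ran^{T'}(a),
$$
that is, $\p\lan H\ran^{T'}/\p\bar a=-i\,\llangle P\rrangle^{T'}$, where $\lan H\ran^{T'}$ is the partial time-average of the function $H$.

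The only delicate point, and the main obstacle, is the passage to the limit $T'\to\infty$: since $H$ is merely $C^1$ one must justify interchanging the gradient $\p/\p\bar a$ with $\lim_{T'\to\infty}$. Here I would invoke Lemma~\ref{average-lm}(2): on each ball $\bar B_R(\C^n)$ the convergence $\llangle P\rrangle^{T'}\to\llangle P\rrangle$ is uniform. Consequently the gradients $\p\lan H\ran^{T'}/\p\bar a=-i\,\llangle P\rrangle^{T'}$ converge uniformly on $\bar B_R(\C^n)$, while $\lan H\ran^{T'}\to\lan H\ran$ pointwise by the definition of the function averaging (Section~\ref{ss_av_}). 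By the standard theorem on differentiating the limit of a sequence of $C^1$-functions (pointwise convergence of the functions together with uniform convergence of their derivatives), the limit $\lan H\ran$ is $C^1$ and $\p\lan H\ran/\p\bar a=\lim_{T'\to\infty}\p\lan H\ran^{T'}/\p\bar a=-i\,\llangle P\rrangle$. Multiplying by $i$ gives $\llangle P\rrangle=i\,\p\lan H\ran/\p\bar a$, as claimed.
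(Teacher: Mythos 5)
Your proof is correct, and it is worth noting that the paper does not actually prove this proposition: it only cites \cite[Theorem 5.2]{JKW}, so your argument supplies a self-contained proof that the text omits. Your route --- the equivariance identity $(\Phi_w)_*P = i\,\p\big(H\circ\Phi_{-w}\big)/\p\bar z$ (a Wirtinger chain-rule computation using that $z\mapsto\Phi_{-w}z$ is holomorphic), differentiation under the integral over the compact interval $[0,T']$ to get $\llangle P\rrangle^{T'} = i\,\p\lan H\ran^{T'}/\p\bar a$, and then the interchange of $\p/\p\bar a$ with $T'\to\infty$ via the uniform-on-balls convergence of Lemma~\ref{average-lm}(2) combined with the classical theorem on differentiating limits --- reduces the statement to the single analytic input Lemma~\ref{average-lm}(2), which is exactly what a reader of this paper would want. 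One point deserves tightening: you justify the pointwise convergence $\lan H\ran^{T'}\to\lan H\ran$ ``by the definition'' of the function averaging, but \eqref{f-averaging} is a limit whose existence is not automatic for a merely $C^1$ function $H$; Lemma~\ref{average-lm-f} guarantees it only for $H\in\text{Lip}_m(\C^n,\C)$. This is easily repaired in two ways: (a) since $H$ is real-valued (as is standard for a Hamiltonian), the derivative $\p H/\p\bar z=-iP$ determines the full real gradient of $H$ (namely $\p H/\p x_j = 2\,\mathrm{Re}\,\p H/\p\bar z_j$, $\p H/\p y_j = 2\,\mathrm{Im}\,\p H/\p\bar z_j$), so $P\in\text{Lip}_{m_0}$ forces $H\in\text{Lip}_{m_0+1}$ and Lemma~\ref{average-lm-f} applies; or (b) avoid the issue entirely by observing that $\lan H\ran^{T'}(0)=H(0)$ for every $T'$ (because $\Phi_{-t\Lambda}0=0$), so convergence at one point together with uniform convergence of the gradients on each ball $\bar B_R(\C^n)$ already yields, by the very differentiation theorem you invoke, locally uniform convergence of $\lan H\ran^{T'}$ to a $C^1$ limit, which is then $\lan H\ran$ by definition. (Note that the realness of $H$ is also what makes your appeal to that theorem legitimate: uniform convergence of the $\p/\p\bar a$-derivatives is uniform convergence of the full real gradients only because $\lan H\ran^{T'}$ is real.) With either repair the proof is complete.
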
 

For a proof see \cite[Theorem 5.2]{JKW}.

\subsection{Calculating the averagings} \label{ss_calculating}

{\bf 1)} The frequency vector $\Lambda= (\lambda_1,\dots,\lambda_n)$ 
 is called {\it completely resonant} if its components $\lambda_j$ are 
proportional to some $\lambda>0$. I.e. if $\lambda_j/\lambda \in\Z$ for all $j$. In this case all trajectories of the original 
linear system \eqref{linear-s1} are periodic, the operator $\Phi_{t\Lambda}$ is $2\pi/\lambda$-periodic in $t$ and so 
\be\label{w0}
\llangle {\tilde P}\rrangle(a)= \llangle {\tilde P}\rrangle^{2\pi/\lambda}(a)=
\frac{\lambda}{{2\pi}}\
\int_0^{2\pi/\lambda} (\Phi_{t\Lambda })_* {\tilde P}(a)dt.
\ee

Completely resonant linear systems  \eqref{linear-s1}  and their perturbations \eqref{1} often occur in applications. In particular -- 
in non-equilibrium statistical physics. There the dimension $D=2n$ is large, all $\lambda_j$'s are equal, 
and the Wiener process $W(t)$ in  \eqref{1}  may be very degenerate  (it may have only two non-zero components).  E.g. see \cite{AD}, where 
more references may be found. 
\smallskip

\noindent 
{\bf 2)}  Let us 
consider the case, opposite to the above  and assume that  frequency vector $\Lambda$ is  {\it non-resonant}:
  \be\label{nr}
  \sum_{i=1}^nm_j\lambda_j\neq0\qquad \forall (m_1,\dots,m_n)\in\mathbb{Z}^n\setminus\{0\}
  \ee
  (that is,  the real numbers $\lambda_j$ are rationally independent).  Then
\be\label{relation}
\llangle {\tilde P}\rrangle(a)=\frac{1}{(2\pi)^n}\int_{\mathbb{T}^n} (\Phi_w)_* \tilde P(a)\,dw, \qquad \T^n = \R^n/(2\pi\Z^n).
\ee
Indeed, if ${\tilde P}$ is a  polynomial vector field, then \eqref{relation} easily follows from \eqref{P-averaging} by a direct
component-wise calculation. The general case is a consequence of  this result  since any vector field may be approximated by
  polynomial  fields. Details are left to the reader (cf. Lemma~3.5 in \cite{JKW}, where ${\tilde P}^{res}$ equals to the r.h.s. of
 \eqref{relation} if the vector $\Lambda$ is non-resonant).

 The r.h.s. of \eqref{relation} obviously is invariant with respect to all rotations  $\Phi_{w'}$, so it depends not on the vector 
 $a$, but only  on the corresponding torus
 \be\label{atorus}
\{ z \in \C^n: I_j(z) = I_j(a) \quad \forall\, j\}
 \ee
 (see \eqref{Iphi}) to which $a$ belongs,
  \be\label{relation1}
 \big(\Phi_{w}\big)_*  \llangle {\tilde P} \rrangle (a) \equiv  \llangle {\tilde P} \rrangle (a)  \qquad \forall w \in \C^n\; \;\; \text{if $\Lambda$ is non-resonant}.
 \ee  
  See below Section \ref{ss_51}  for discussing equations \eqref{1} with non-resonant vectors $\Lambda$. 
 \medskip
 
   \noindent 
 {\bf 3)}   If the field $\tilde P$ in \eqref{P-averaging} is polynomial, i.e.
\be\label{polyn}
\tilde P_j(a) = \sum_{|\alpha|, |\beta| \le N} C_j^{\alpha, \beta} a^\alpha \bar a^\beta, \qquad j=1, \dots, n, 
\ee
for some $N \in\Nn$, where $\alpha, \beta \in \Z_+^n$, $a^\alpha = \prod a_j^{\alpha_j}$ and $|\alpha| = \sum |\alpha_j|$, then 
$ \llangle {\tilde P} \rrangle = \tilde P^{res}$. Here $\tilde P^{res}$ is a polynomial vector field such that for each $j$, 
 $\tilde P^{res}_j(a)$ is given by the r.h.s. of \eqref{polyn}, where the summation is taken over all $|\alpha|, |\beta| \le N$, 
 satisfying 
 $
 \Lambda \cdot (\alpha - \beta) = \lambda_j. 
 $
 This easily follows from an explicit calculation of the integral in \eqref{part_aver} (see \cite[Lemma~3.5]{JKW}).

\subsection{Averaging of functions}  \label{ss_av_}
Similarly to  definition \eqref{P-averaging}, 
 for a locally Lipschitz function $f\in \text{Lip}_m (\C^n, \C)$, $m\ge0$, we define its averaging with respect to a frequency 
vector $\Lambda$ as
\begin{equation}\label{f-averaging}
\langle f\rangle(a)=\lim_{{T'}\to\infty}\frac{1}{{T'}}\int_0^{T'}f(\Phi_{-t\Lambda }a)dt, \quad a\in \C^n.
\end{equation}
Then by the same argument as above we obtain:

\begin{lemma}\label{average-lm-f}
If $f\in \text{Lip}_m (\C^n, \C)$, then 
\begin{enumerate}
\item The limit \eqref{f-averaging} exists for every $a$, and for 
 $a\in   \BR(\C^n)$  the rate of convergence in \eqref{f-averaging} depends not on $a$, but only on $R$.
\item $\langle f\rangle \in \text{Lip}_{m}(\mathbb{C}^n;\C)$  and $\cC^m( \langle f\rangle ) \le \cC^m(f)$.
\item If $f$ is $C^m$-smooth for some $m\in \Nn$,  then  $\langle f\rangle$ also is, and the $C^m$-norm of the latter is
bounded by the $C^m$-norm of the former. 
\item The function $\langle f\rangle$ commutes with the operators $\Phi_{\theta\Lambda}$, $ \theta\in\mathbb{R}$, in the sense  that
$\langle f\circ\Phi_{\theta\Lambda}\rangle=\langle f\rangle\circ\Phi_{\theta\Lambda}=\langle f\rangle$.

\end{enumerate}
\end{lemma}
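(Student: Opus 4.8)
The plan is to repeat, with one simplification, the four arguments already given in this section for vector fields: in the function case the outer rotation $\Phi_{t\Lambda}$ present in \eqref{P-averaging} is absent, which only makes things easier. Throughout I write, in analogy with the partial averaging of vector fields,
$$
\langle f\rangle^{T'}(a) := \frac{1}{T'}\int_0^{T'} f(\Phi_{-t\Lambda}a)\,dt, \qquad T'>0,
$$
so that $\langle f\rangle = \lim_{T'\to\infty}\langle f\rangle^{T'}$. First I would record the contraction bound that is the function analogue of the first Lemma of Section~\ref{s_3}: since each $\Phi_{-t\Lambda}$ is unitary, it maps $\bar B_R(\C^n)$ into itself and has Lipschitz constant $1$, whence for $a\in\bar B_R(\C^n)$ one has $|f(\Phi_{-t\Lambda}a)|\le \cC^m(f)(1+R)^m$, and for $a_1,a_2\in\bar B_R(\C^n)$ one has $|f(\Phi_{-t\Lambda}a_1)-f(\Phi_{-t\Lambda}a_2)|\le \cC^m(f)(1+R)^m|a_1-a_2|$. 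Averaging in $t$ preserves both inequalities, so $\cC^m(\langle f\rangle^{T'})\le \cC^m(f)$ uniformly in $T'$. This is the bound that legitimises all the limits below.

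For part (1) I would argue exactly as in Lemma~\ref{average-lm} and \cite[Lemma~3.1]{JKW}. For a polynomial $f(a)=\sum C^{\alpha\beta}a^\alpha\bar a^\beta$ (as in \eqref{polyn}) the integrand is $\sum C^{\alpha\beta}e^{-it\,\Lambda\cdot(\alpha-\beta)}a^\alpha\bar a^\beta$, whose time average over $[0,T']$ converges as $T'\to\infty$ to the sum of the resonant terms $\Lambda\cdot(\alpha-\beta)=0$, with error $O(1/T')$ and a constant depending only on the coefficients, the degree and $R$; this gives existence of the limit and the claimed $R$-dependence of the rate for polynomials. A general $f\in\text{Lip}_m$ is approximated uniformly on $\bar B_R$ by polynomials, and the uniform bound $\cC^m(\langle f\rangle^{T'})\le\cC^m(f)$ turns $\{\langle f\rangle^{T'}\}$ into a uniformly Cauchy family on $\bar B_R$ (the same approximation estimate as in Proposition~\ref{C-m-smooth-average}), so the limit exists with $R$-dependent rate. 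Letting $T'\to\infty$ in the contraction bound above then yields part (2), namely $\cC^m(\langle f\rangle)\le\cC^m(f)$.

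Part (3) I would prove exactly as Proposition~\ref{C-m-smooth-average}: differentiating $\langle f\rangle^{T'}$ under the integral sign and using that the $\Phi_{-t\Lambda}$ are unitary gives $|\langle f\rangle^{T'}|_{C^m(\bar B_R)}\le |f|_{C^m(\bar B_R)}$ for every $T'$; combined with polynomial approximation in $C^m(\bar B_R)$ and the Cauchy-sequence argument this shows $\langle f\rangle\in C^m(\C^n)$ with the stated norm bound. Finally, part (4) is the function analogue of Lemma~\ref{average-invariant} and is immediate from a shift of variable: since $\Phi_{\theta\Lambda}\circ\Phi_{-t\Lambda}=\Phi_{(\theta-t)\Lambda}=\Phi_{-t\Lambda}\circ\Phi_{\theta\Lambda}$, both $\langle f\circ\Phi_{\theta\Lambda}\rangle(a)$ and $(\langle f\rangle\circ\Phi_{\theta\Lambda})(a)=\langle f\rangle(\Phi_{\theta\Lambda}a)$ equal $\lim_{T'\to\infty}\tfrac1{T'}\int_0^{T'}f(\Phi_{(\theta-t)\Lambda}a)\,dt$, which after the substitution $s=t-\theta$ becomes $\langle f\rangle(a)$, the shifted endpoints contributing only $O(1/T')$.

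I expect no step to present a real obstacle: the only substantive one is the existence claim in part (1), and even there the single idea needed is the explicit computation for trigonometric polynomials together with the uniform Lipschitz bound, so the difficulty is bookkeeping rather than anything conceptual.
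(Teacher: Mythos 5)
Your proof is correct and is exactly what the paper intends: the paper gives no separate argument for Lemma~\ref{average-lm-f}, stating only that it follows ``by the same argument as above,'' i.e.\ by transplanting the vector-field proofs (Lemma~\ref{average-lm}, Lemma~\ref{average-invariant}, Proposition~\ref{C-m-smooth-average}) to functions, which is precisely what you have written out. Your observation that the function case is simpler because the outer rotation $\Phi_{t\Lambda}$ is absent is also accurate.
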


If the vector $\Lambda$ is non-resonant, then similar to \eqref{relation}
we have
\be\label{relation2}
\langle {f}\rangle(a)=\frac{1}{(2\pi)^n}\int_{\mathbb{T}^n} {f}(\Phi_{-w}a)\,dw.
\ee
  The r.h.s. of \eqref{relation2} is  the {\it averaging of  function $f$ in angles}.  It 
   is constant on the tori \eqref{atorus}.

\section{Effective equation and the averaging theorem}\label{s_4}
In this section we show that the limiting measure $Q_0$ in \eqref{precomp}
 is independent of the choice of  the sequence ${\ep}'_l\to0$, so
$
\mathcal{D}(a^{\ep})\rightharpoonup Q_0\; \text{as}\ {\ep}\to0,
$
and represent $Q_0$ as the law of a solution of an auxiliary {\it effective
equation}. The drift in this equation is the averaged drift in eq.~\eqref{v-equation-slowtime}. Now we construct its dispersion. 

 The diffusion matrix for  equation \eqref{a-equation2} is  the $n\times n$ complex matrix 
\[ \cA^{\ep}(\tau)=(\Phi_{\tau{\ep}^{-1}\Lambda}\Psi)\cdot(\Phi_{\tau{\ep}^{-1}\Lambda}\Psi)^*.\]
 Denoting
 \be\label{ps_ep}
 \Phi_{\tau{\ep}^{-1}\Lambda}\Psi =
  \big ( e^{i\tau{\ep}^{-1}\lambda_l}\Psi_{lj}\big)=  : \big(\psi^{\ep}_{lj}(\tau) \big) = \psi^{\ep} (\tau) ,
 \ee
  we have
$$
\cA^{\ep}_{kj}(\tau)=\sum_{l=1}^{n_1}\psi^{\ep}_{kl}(\tau)\bar{\psi^{\ep}_{jl}}(\tau)=e^{i \tau{\ep}^{-1} (\lambda_k-\lambda_j)}\sum_{l=1}^{n_1}\Psi_{kl}\bar{\Psi}_{jl}.
$$
So for any $\tau>0$,
\[\frac{1}{\tau}\int_0^{\tau}\cA^{\ep}_{kj}(s)ds=\big(\sum_{l=1}^{n_1}\Psi_{kl}\bar{\Psi}_{jl}\big)\frac{1}{\tau}\int_0^{\tau}e^{is{\ep}^{-1}(\lambda_k-\lambda_j)}ds,\]
and we immediately see that
\be\label{AA}
\frac{1}{{\tau}}\int_0^{\tau}\cA^{\ep}_{kj}(\tau)d\tau\to A_{kj}\quad \text{as}\quad {\ep}\to0,
\ee
where
\begin{equation}\label{matrix-effective}A_{kj}=\begin{cases}\sum_{l=1}^{n_1}\Psi_{kl}\bar \Psi_{jl},&\text{if }\lambda_k=\lambda_j,\\
0,&\text{otherwise}.
\end{cases}\end{equation}
Clearly, $A_{kj}=\bar A_{jk}$, so $A$ is a Hermitian  matrix.
If $\lambda_k\neq\lambda_j$ for $k\neq j$, then
\be\label{diag_case}
A=\text{diag} \{b_1, \dots, b_n\}, \quad b_k= \sum_{l=1}^{n_1}|\Psi_{kl}|^2.
\ee

For any vector $\xi\in\mathbb{C}^n$ we get from \eqref{AA} that  $\langle A\xi,\xi\rangle\geqslant0$ since obviously
 $\langle \cA^{\ep}(\tau)\xi,\xi\rangle=|\psi^{\ep}(\tau)\xi|^2\geqslant0$ for each $ {\ep}$.
Therefore $A$ is a  nonnegative Hermitian matrix,    and there exists another nonnegative Hermitian matrix $B$
(called the principal square root of $A$) such that $BB^*=B^2=A$.   The matrix  $B$ is non-degenerate if  $\Psi$ is.

\begin{example}
If $\Psi$ is a diagonal matrix diag$\,\{\psi_1, \dots,\psi_n\}$, $\psi_j\in\R$, then
$
\cA^{\ep}(\tau) = |\Psi|^2.
$
In this case $A=|\Psi|^2$ and $B= |\Psi| = $diag$\,\{|\psi_1|, \dots,|\psi_n|\}$.
\end{example}

In fact, it is not needed that the matrix $B$ is square and Hermitian, and the argument below remains true if we take for $B$ any
complex $n\times N$-matrix (with any $N\in \Nn$), satisfying the equation
$$
B B^* =A. 
$$

Now we define the {\it  effective equation} for  eq.~\eqref{a-equation2} as follows:
\begin{equation}\label{effective-equation}
da_k- \llangle P\rrangle_k(a)d\tau= \sum_{l=1}^nB_{kl}d\beta^c_l,\quad k=1,\dots,n.
\end{equation}
Here the matrix $B$ is as above and $ \llangle P\rrangle$ is the resonant averaging of the vector field $P$.  
  We will usually consider this equation with the same initial condition as equations \eqref{v-equation-slowtime}
 and \eqref{a-equation2}:
\be\label{brr}
a(0) = v_0.
\ee
 Since the vector field $\llangle P\rrangle$ is locally Lipschitz and the dispersion matrix $B$ is constant, then a strong solution of
eq.~\eqref{effective-equation},~\eqref{brr}, if exists, is unique.

Note that the effective dispersion $B$ in \eqref{effective-equation} is a square root of an explicit matrix, and due to 
item 3) of Section~\ref{ss_calculating} if the vector field $P(v)$ is polynomial, then the effective drift $\llangle P\rrangle(a)$ also is 
given by an explicit formula.

\begin{proposition}\label{average-K-th} The  limiting probability  measure $Q_0$
in \eqref{precomp}  is a weak solution of effective  equation \eqref{effective-equation}, \eqref{brr}.
\end{proposition}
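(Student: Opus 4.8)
The plan is to use the Khasminski approach: identify the limiting measure $Q_0$ as a weak solution of the effective equation by verifying the associated martingale problem. Concretely, I would fix a test function $\varphi\in C_b^2(\C^n)$ and consider the generator $\cL^{eff}$ of eq.~\eqref{effective-equation}, which acts as $\cL^{eff}\varphi(a)=\langle \llangle P\rrangle(a),\partial_a\varphi\rangle+\tfrac12\sum_{k,j}A_{kj}\partial^2_{a_k\bar a_j}\varphi$ (with $A$ the limiting diffusion matrix from \eqref{matrix-effective}). The goal is to show that for $Q_0$-a.e. path $a(\cdot)$, the process
\be\label{mart}
\varphi(a(\tau))-\varphi(a(0))-\int_0^\tau \cL^{eff}\varphi(a(s))\,ds
\ee
is a martingale with respect to the natural filtration. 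Since the effective drift is locally Lipschitz and $B$ is constant, a weak solution of \eqref{effective-equation}, \eqref{brr} is characterized by this martingale property, so verifying it for $Q_0$ establishes the claim.

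The main step is the passage to the limit along the subsequence $\eps_l'\to0$ from \eqref{precomp}. First I would apply It\^o's formula to $\varphi(a^\eps(\tau))$ using the SDE \eqref{a-equation2}, obtaining that
\be\label{ito-exact}
\varphi(a^\eps(\tau))-\varphi(a^\eps(0))-\int_0^\tau\Big(\langle Y_P(a^\eps(s);s\eps^{-1}),\partial_a\varphi\rangle+\tfrac12\sum_{k,j}\cA^\eps_{kj}(s)\,\partial^2_{a_k\bar a_j}\varphi\Big)ds
\ee
is an exact martingale for each $\eps$, where $Y_P(a;t)=(\Phi_{t\Lambda})_*P(a)$ is the oscillating drift from Section~\ref{s_3}. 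The heart of the argument is to replace the fast-oscillating integrands by their time-averages: the drift term $Y_P(a^\eps(s);s\eps^{-1})$ must be shown to be close, in the appropriate averaged sense, to $\llangle P\rrangle(a^\eps(s))$, and the diffusion coefficient $\cA^\eps_{kj}(s)$ must be replaced by $A_{kj}$ using \eqref{AA}. For the diffusion part this is essentially the convergence \eqref{AA} together with the moment bound \eqref{moment1}; for the drift part I would invoke Lemma~\ref{average-lm}, in particular the uniform-in-$a$ rate of convergence on balls $\BR(\C^n)$, to control the difference between partial averagings $\llangle P\rrangle^{T'}$ and the full averaging $\llangle P\rrangle$.

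The hard part will be the standard ``freezing'' or time-discretization argument needed to justify averaging the oscillating drift along solution paths rather than along constant paths. The difficulty is that $a^\eps(s)$ itself varies with $s$, so one cannot directly apply the deterministic averaging lemma pointwise. The remedy is to partition $[0,T]$ into subintervals of a small length $\delta$ that is nonetheless large compared to the oscillation period $\sim\eps$; on each subinterval one freezes $a^\eps$ at its left endpoint, applies the averaging estimate of Lemma~\ref{average-lm} on the frozen slice, and controls the error using the H\"older continuity \eqref{compact-up1} of $a^\eps$ together with the local Lipschitz bound $\cC^{m_0}(P)$ on $P$ and the uniform moment estimate \eqref{moment1}. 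Passing $\eps\to0$ first (with $\delta$ fixed) and then $\delta\to0$, and using weak convergence $Q_{\eps_l'}\rightharpoonup Q_0$ to pass the martingale identity \eqref{ito-exact} to the limit \eqref{mart}, yields the martingale property for $Q_0$. Care is needed because the integrand in \eqref{mart} involves second derivatives of $\varphi$ and is merely bounded, not compactly supported; here the uniform integrability supplied by \eqref{moment1} with $m_0'>m_0\vee1$ is exactly what allows the limit to be taken inside the expectations defining the martingale condition.
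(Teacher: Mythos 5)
Your proposal is correct and follows the same Khasminski skeleton as the paper: identify $Q_0$ via a martingale problem, with the heart of the matter being the freezing/time-discretization argument that lets one average the oscillating drift along solution paths (your discretization with $\eps\ll\delta$, left-endpoint freezing, H\"older control via \eqref{compact-up1} and moment bounds \eqref{moment1} is precisely the content and the proof of the paper's Lemma~\ref{key-a-lemma}). Where you diverge is in the packaging. The paper states the martingale problem in coordinate form (Definition~\ref{weak-m-s}): it shows that the compensated process $N^{\llangle P\rrangle}(\tau;a)=a(\tau)-\int_0^\tau\llangle P\rrangle(a(s))ds$ is a $Q_0$-martingale, and that $N_k^{\llangle P\rrangle}\overline{N_j^{\llangle P\rrangle}}-2\int_0^\tau(BB^*)_{kj}ds$ and $N_k^{\llangle P\rrangle}N_j^{\llangle P\rrangle}$ are martingales; it then concludes not by citing the equivalence of martingale-problem solutions and weak solutions (as you do), but by a direct construction via the martingale representation theorem for complex martingales (Appendix~\ref{r-o-m}) -- a choice the paper makes explicitly, ``instead of referring to a corresponding theorem.'' Your generator/test-function formulation is equally legitimate and handles drift and diffusion in one stroke, but note two points your sketch underplays. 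First, after It\^o's formula the diffusion term is $\cA^\eps_{kj}(s)\,\partial^2_{a_k\bar a_j}\varphi(a^\eps(s))$, a fast-oscillating deterministic factor times a \emph{path-dependent} factor, so the convergence \eqref{AA} alone does not suffice: the same freezing argument must be run on this term as well (easy, since $\partial^2\varphi$ is bounded, but it is not ``essentially \eqref{AA}''); the paper's coordinate formulation sidesteps this because its first condition needs no It\^o formula at all ($N^{Y,\eps}$ is literally the stochastic integral, hence a martingale by inspection), and It\^o enters only for products of coordinates. Second, with the paper's normalization of complex Wiener processes ($\mathbf{E}|\beta^c(t)|^2=2t$, cf.\ \eqref{kj} and Appendix~\ref{a_ito}) the It\^o correction carries a factor $2$, so your generator should read $\cL^{eff}\varphi=2\,\mathrm{Re}\sum_j\llangle P\rrangle_j\partial_{a_j}\varphi+2\sum_{k,j}A_{kj}\partial^2_{a_k\bar a_j}\varphi$ rather than with the real-coordinate factor $\tfrac12$; this is cosmetic in a sketch but would matter in the final identification of the diffusion.
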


We recall that a measure $Q \in\mathcal{P}(C([0,T],\mathbb{C}^n))$  is a {\it weak solution}  of equation \eqref{effective-equation}, \eqref{brr}
if  $Q=\cD(\tilde a)$, where  the random process $\tilde{a}(\tau)$, $0\le \tau\le T$,  is a   weak solution
 of   \eqref{effective-equation},  \eqref{brr}. 
\footnote{Concerning weak solutions of SDEs e.g. see  e.g. \cite[Section 5.3]{brownianbook}.}
\smallskip

A proof of this result is preceded by a number of lemmas. Till the end of this section we  assume Assumption~\ref{assume-wp1}. 
 As  in  Section~\ref{s_3} we denote 
\be\label{Y}
\Ye(a;\tau{\ep}^{-1}):= (\Phi_{\tau{\ep}^{-1}\Lambda})_*P(a). 
\ee
Then  equation \eqref{a-equation2} for $a^{\ep}$ reads as
\be\label{new_form}
da^{{\ep}}(\tau)-\Ye(a^{{\ep}},\tau{\ep}^{-1})d\tau=\Phi_{\tau{\ep}^{-1}\Lambda}\Psi d\beta^c(\tau).
\ee
Denote
 \be\label{ty}
 \tilde y(a,\tau{\ep}^{-1})=\Ye(a,\tau{\ep}^{-1})-\llangle P\rrangle(a) =  (\Phi_{\tau{\ep}^{-1}\Lambda})_*P(a) - \llan P\rran (a). 
 \ee
 The following key
 lemma shows that  integrals of $\tilde y(a^{\ep},\tau{\ep}^{-1})$ with respect to $\tau$
  become small with  ${\ep}$, uniformly in the segment of integrating.

\begin{lemma}\label{key-a-lemma}
 For a  solution  $a^{\ep}(\tau)$   of  equation \eqref{a-equation2}, \eqref{same}   we have
\[
{\EE} \max_{0\leqslant\tau\leqslant T}\Big|\int_0^{\tau}\tilde y(a^{{\ep}}(s),s{\ep}^{-1})ds\Big|\to0\quad \text{as}\quad {\ep}\to0.
\]
\end{lemma}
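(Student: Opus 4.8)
The plan is to discretize the time interval $[0,T]$ into small subintervals and exploit the separation between the fast oscillation of $\tilde y(a,\tau\eps^{-1})$ and the slow variation of the solution $a^\eps(\tau)$. This is the heart of the Khasminski method: on a subinterval short enough that $a^\eps$ barely moves, the integral of the fast-oscillating field $\tilde y(a^\eps(s),s\eps^{-1})$ is close to the integral of $\tilde y(a^\eps(\sigma),s\eps^{-1})$ with the slow variable frozen at the left endpoint $\sigma$, and the latter is small because $\tilde y(a,\cdot)$ integrates to (almost) zero by the very definition of $\llan P\rran$.

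Concretely, I would partition $[0,T]$ into $L$ intervals $[\tau_q,\tau_{q+1}]$ of equal length $\Delta=T/L$ and write, for any $\tau$ with $\tau_p\le\tau<\tau_{p+1}$,
\begin{equation*}
\int_0^\tau \tilde y(a^\eps(s),s\eps^{-1})\,ds = \sum_{q=0}^{p-1}\int_{\tau_q}^{\tau_{q+1}} \tilde y(a^\eps(s),s\eps^{-1})\,ds + \int_{\tau_p}^\tau \tilde y(a^\eps(s),s\eps^{-1})\,ds.
\end{equation*}
The last term is bounded by $\Delta$ times a moment of $\sup|a^\eps|$, which is small if $\Delta$ is small. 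For each full subinterval I would add and subtract the frozen integrand, splitting into
\begin{equation*}
\int_{\tau_q}^{\tau_{q+1}}\!\!\big(\tilde y(a^\eps(s),s\eps^{-1})-\tilde y(a^\eps(\tau_q),s\eps^{-1})\big)ds + \int_{\tau_q}^{\tau_{q+1}}\!\!\tilde y(a^\eps(\tau_q),s\eps^{-1})\,ds.
\end{equation*}
The first (diagonal) piece is controlled by the local Lipschitz bound on $P$ together with the H\"older modulus of continuity $\|a^\eps\|_{1/3}$ established in Lemma~\ref{pre-compact1}; since $a^\eps$ moves at most by $O(\Delta^{1/3})$ on the subinterval, this contributes $O(\Delta^{1/3})$ per interval, hence $O(L\cdot\Delta\cdot\Delta^{1/3})=O(\Delta^{1/3})$ after summing and using that there are $L=T/\Delta$ intervals. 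The second (frozen) piece is where the averaging enters: after the change of variables $s=\eps t$ it becomes $\eps$ times an integral of $Y_{P}(a^\eps(\tau_q);t)-\llan P\rran(a^\eps(\tau_q))$ over a long $t$-interval of length $\Delta\eps^{-1}$, and by Lemma~\ref{average-lm} this is $o(\Delta\eps^{-1})$ uniformly on balls, i.e. $\eps\cdot o(\Delta\eps^{-1})$, which tends to $0$ as $\eps\to0$ for fixed $\Delta$.

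The main obstacle, and the reason the estimate must be organized carefully, is the order of limits: the two error contributions pull in opposite directions. The diagonal (Lipschitz) error forces $\Delta$ to be small, while the frozen (averaging) error is only small once $\eps\to0$ \emph{with $\Delta$ fixed}. The resolution is the standard one: first choose $L$ (equivalently $\Delta$) large enough to make the diagonal and remainder terms smaller than a prescribed $\delta$, then send $\eps\to0$ so that the $L$ frozen terms each vanish. To make this rigorous in expectation I would first truncate onto the event $\{\sup_{[0,T]}|a^\eps|\le R\}$ and $\{\|a^\eps\|_{1/3}\le R\}$, whose complement has probability going to $0$ uniformly in $\eps$ by the moment bound \eqref{moment1} and by \eqref{compact-up1}; on this event all the above bounds are uniform in $\eps$, and the contribution of the complement to the expectation is controlled by Cauchy--Schwarz together with the higher moment $m_0'>m_0$ in Assumption~\ref{assume-wp1}. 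Assembling these pieces and letting $\eps\to0$ after optimizing in $\Delta$ and $R$ gives the claimed convergence.
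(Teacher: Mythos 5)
Your overall scheme is the one the paper uses: truncation onto the events $\{\sup_{[0,T]}|a^\eps|\le M_1\}$ and $\{\|a^\eps\|_{1/3}\le M_2\}$ (complements handled by Cauchy--Schwarz and the moment bound \eqref{moment1}, \eqref{compact-up1}), a partition of $[0,T]$, freezing of the slow variable at left endpoints, H\"older/Lipschitz control of the diagonal error, and the averaging lemma for the frozen term. The only structural difference is that you keep the mesh $\Delta$ fixed and take a double limit ($\eps\to0$ first, then $\Delta\to0$), whereas the paper couples the mesh to $\eps$, taking $L\sim\sqrt{\eps}$, so that the diagonal error ($\sim M_1^{m_0}M_2L^{1/3}\sim\eps^{1/6}$) and the frozen error vanish simultaneously in a single limit; both organisations are legitimate.

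There is, however, a genuine gap in your treatment of the frozen term. After the substitution $s=\eps t$, the frozen integral over the $q$-th interval is
$$
\eps\int_{\tau_q\eps^{-1}}^{\tau_q\eps^{-1}+\Delta\eps^{-1}}\big[Y_P(a^\eps(\tau_q);t)-\llan P\rran(a^\eps(\tau_q))\big]\,dt ,
$$
so the $t$-window has length $\Delta\eps^{-1}$ but starts at the enormous phase $\tau_q\eps^{-1}$, not at $0$. Lemma~\ref{average-lm} (and the definitions \eqref{part_aver}, \eqref{P-averaging}) concerns only averages $\tfrac1{T'}\int_0^{T'}$ taken from time $0$, so it does not yield your claim that this is $\eps\cdot o(\Delta\eps^{-1})$ "by Lemma~\ref{average-lm}". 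The paper closes exactly this gap with a rotation argument: writing $Y_P(a;\tau_q\eps^{-1}+x)=\Phi_{\tau_q\eps^{-1}\Lambda}\big[\Phi_{x\Lambda}P\big(\Phi_{-x\Lambda}(\Phi_{-\tau_q\eps^{-1}\Lambda}a)\big)\big]$, it recognises the $x$-integral as the partial average $\llan P\rran^{\Delta/\eps}$ evaluated at the rotated point $\Phi_{-\tau_q\eps^{-1}\Lambda}a$ (same norm, so the uniform-on-balls convergence of Lemma~\ref{average-lm} applies), and then removes the outer rotation via the invariance $\Phi_{\theta\Lambda}\circ\llan P\rran\circ\Phi_{-\theta\Lambda}=\llan P\rran$ of Lemma~\ref{average-invariant} --- a lemma your proposal never invokes. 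Within your fixed-$\Delta$ scheme the step can alternatively be repaired without the rotation trick, by writing the shifted-window integral as a difference of two Ces\`aro averages from $0$: the error is then at most $(2\tau_q+\Delta)\,\kappa(\Delta/\eps)$ with $\kappa\to0$ uniformly on balls, and summing over the $\le T/\Delta$ intervals still gives $o_\eps(1)$ for fixed $\Delta$. Note that this cruder fix would \emph{not} suffice for the paper's choice $L\sim\sqrt{\eps}$ (the prefactor $T/L\sim\eps^{-1/2}$ would beat the unquantified rate $\kappa$), which is precisely why the rotation-invariance argument appears in the paper. With this point repaired, your proof is correct.
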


The lemma is proved in  Subsection \ref{proof-key-a-lemma}.

Now let us  introduce a natural filtered measurable  space for the problem we consider,
\be\label{natural}
(\tilde\Omega,\mathcal{B},\{\mathcal{B}_\tau,0\leqslant\tau\leqslant T\}),
\ee
 where $  \tilde\Omega$ is the Banach space  $ C([0,T],\mathbb{C}^n)=\{a:=a(\cdot)\}$,
$ \mathcal{B}$ is its Borel $\sigma$-algebra
 and  $\mathcal{B}_\tau$  is the sigma-algebra,  generated by the r.v.'s
  $\{a(s):0\leqslant s\leqslant \tau)\}$.
   Consider the process on $\tilde\Omega$, defined by the l.h.s. of \eqref{effective-equation}
 \be\label{NR}
 N^{\llangle P\rrangle}(\tau;a)=a(\tau)-\int_0^\tau {\llangle P\rrangle}(a(s))ds, \qquad a\in\tilde\Omega, \;\tau\in[0,T].
 \ee
 Note that  for any $0\le \tau\le T$,  \ 
  $N^{\llangle P\rrangle}(\tau;\cdot)$ is a  $\mathcal{B}_\tau$-measurable 
  continuous functional on  $C([0,T]; \C^n)$.

 \begin{lemma} \label{lemma-m-limit}The random process $N^{\llangle P\rrangle}(\tau;a)$ is a martingale on the space \eqref{natural}
  with respect to the limiting    measure $Q_0$  in \eqref{precomp}.
 \end{lemma}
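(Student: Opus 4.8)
The plan is to show that $N^{\llangle P\rrangle}(\tau;a)$ is a $Q_0$-martingale by verifying the defining martingale identity in its integrated (weak) form. Concretely, I would fix times $0\le s_1<\dots<s_k\le s<\tau\le T$ and an arbitrary bounded continuous function $g:(\mathbb{C}^n)^k\to\mathbb{C}$, and aim to prove
\be\label{mart-target}
\EE^{Q_0}\Big[\big(N^{\llangle P\rrangle}(\tau;a)-N^{\llangle P\rrangle}(s;a)\big)\, g\big(a(s_1),\dots,a(s_k)\big)\Big]=0.
\ee
Since $N^{\llangle P\rrangle}(\tau;\cdot)$ is a bounded-on-balls continuous $\mathcal{B}_\tau$-measurable functional and $g$ depends only on coordinates up to time $s$, this integrated identity is equivalent to the martingale property, and it is the natural statement to pass to the limit in because everything in sight is a continuous functional of the path $a(\cdot)$.

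First I would establish the analogue of \eqref{mart-target} for the prelimit processes, but with the martingale built from the \emph{exact} drift rather than the averaged one. That is, along the original equation \eqref{a-equation2}, written as \eqref{new_form}, the process
\[
M^{\ep}(\tau):=a^{\ep}(\tau)-\int_0^\tau \Ye(a^{\ep}(\rho),\rho{\ep}^{-1})\,d\rho
=v_0+\int_0^\tau \Phi_{\rho{\ep}^{-1}\Lambda}\Psi\, d\beta^c(\rho)
\]
is a genuine martingale (it is a stochastic integral against the Wiener process). Hence, testing against $g(a^{\ep}(s_1),\dots,a^{\ep}(s_k))$, which is measurable with respect to the filtration at time $s$, gives $\EE\big[(M^{\ep}(\tau)-M^{\ep}(s))\,g(\dots)\big]=0$ exactly, for every $\ep>0$. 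The idea is then to replace, inside this identity, the exact-drift martingale $M^{\ep}$ by the averaged-drift functional $N^{\llangle P\rrangle}(\cdot;a^{\ep})$, controlling the error, and finally to pass $\ep\to0$ along the subsequence $\ep'_l$ of \eqref{precomp}.

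The replacement step is exactly where Lemma \ref{key-a-lemma} does the work. The difference between the two drifts is, by definition \eqref{ty},
\[
M^{\ep}(\tau)-N^{\llangle P\rrangle}(\tau;a^{\ep})
=\int_0^\tau \big(\llangle P\rrangle(a^{\ep}(\rho))-\Ye(a^{\ep}(\rho),\rho{\ep}^{-1})\big)\,d\rho
=-\int_0^\tau \tilde y(a^{\ep}(\rho),\rho{\ep}^{-1})\,d\rho,
\]
and Lemma \ref{key-a-lemma} asserts precisely that $\EE\max_{\tau}\big|\int_0^\tau \tilde y(a^{\ep}(\rho),\rho{\ep}^{-1})\,d\rho\big|\to0$. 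Since $g$ is bounded, the contribution of this error term to the identity tends to $0$ as $\ep\to0$; thus $\EE\big[(N^{\llangle P\rrangle}(\tau;a^{\ep})-N^{\llangle P\rrangle}(s;a^{\ep}))\,g(\dots)\big]\to0$. It then remains to pass to the limit in the $Q_{\ep'_l}$-expectation to recover the $Q_0$-expectation in \eqref{mart-target}. Here $N^{\llangle P\rrangle}(\tau;\cdot)-N^{\llangle P\rrangle}(s;\cdot)$ times $g$ is a continuous functional on $C([0,T],\mathbb{C}^n)$ and $Q_{\ep'_l}\rightharpoonup Q_0$, so convergence of expectations follows by the definition of weak convergence — but only after addressing the integrability obstacle below, since this functional is not bounded.

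The main obstacle I anticipate is the unboundedness of the test functional: because $\llangle P\rrangle\in\text{Lip}_{m_0}$ grows polynomially, $N^{\llangle P\rrangle}$ is not in $C_b$, so weak convergence $Q_{\ep'_l}\rightharpoonup Q_0$ does not immediately transfer the expectations. To handle this I would use uniform integrability supplied by the moment bound \eqref{moment1}: the $2m_0'$-moments of $\sup_\tau|a^{\ep}(\tau)|$ are bounded uniformly in $\ep$, and since $m_0'>m_0\vee 1$, the family $\{|N^{\llangle P\rrangle}(\tau;a^{\ep})\,g|\}$ is uniformly integrable, which together with weak convergence yields convergence of the expectations (e.g. via truncating $N^{\llangle P\rrangle}$ at level $R$, applying weak convergence to the bounded truncation, and sending $R\to\infty$ using the uniform moment control to bound the tail). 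Combining the exact identity for $M^{\ep}$, the Lemma \ref{key-a-lemma} error estimate, and this uniformly-integrable passage to the limit establishes \eqref{mart-target}, and hence the martingale property of $N^{\llangle P\rrangle}$ under $Q_0$.
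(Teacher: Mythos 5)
Your proposal is correct and follows essentially the same route as the paper's proof: you start from the exact-drift martingale $N^{Y,\eps}$ (your $M^{\ep}$), use Lemma \ref{key-a-lemma} to swap it for the averaged-drift functional $N^{\llangle P\rrangle}(\cdot;a^{\ep})$ inside the tested martingale identity, and then pass to the limit along $Q_{\ep'_l}\rightharpoonup Q_0$ via truncation plus the uniform moment bound \eqref{moment1}, exactly as the paper does with its cutoff $G_M$ and Fatou argument. The only cosmetic difference is that you test against cylinder functions $g(a(s_1),\dots,a(s_k))$ while the paper tests against general bounded continuous $\mathcal{B}_\tau$-measurable functionals, which is an equivalent formulation.
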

 \begin{proof} 
  Let us fix a $\tau\in[0,T]$ and take a  $\mathcal{B}_\tau$-measurable
  function $f^\tau\in C_b(\tilde\Omega)$.
 We will show that
 \begin{equation}\label{mart-nr1}
 {\EE}^{Q_0}\Big(N^{\llangle P\rrangle}(t;a)f^\tau(a)\Big)=\EE^{Q_0}\Big(N^{\llangle P\rrangle}(\tau;a)f^\tau(a)\Big)
 \quad \text{for any} \;\; \tau\leqslant t \leqslant T,
 \end{equation}
 which would  imply  the assertion.
 To establish this let us first consider the process 
 $$
 N^{Y,{\eps}}(\tau; a^\eps) := 
 a^{{\eps}}(\tau)-\int_0^\tau Y (a^{{\eps}},s{\eps}^{-1})ds,
 $$
 who is a martingale in view of \eqref{new_form}. As
 $$
 N^{Y,{\eps}}(\tau; a^\eps)  - N^{{\llangle P\rrangle}}(\tau; a^\eps)  = \int_0^\tau \big[  {\llangle P\rrangle}(a^\eps(s)) -
 Y(a^\eps(s), s\eps^{-1})\big]ds, 
 $$
 then by  Lemma \ref{key-a-lemma}, 
 \be\label{x00}
 \max_{0\le\tau\le T} \EE\big| N^{Y,{\eps}}(\tau; a^\eps)  - N^{{\llangle P\rrangle}}(\tau; a^\eps)\big| 
 =o_\eps(1).
 \ee
 Here and below in this proof $o_\eps(1)$ stands for a quantity  which goes to zero with $\eps$. 
 Since $ N^{Y,{\eps}}$ is a martingale, then by this relation 
 \[ \begin{split}
  {\EE} \big(N^{{\llangle P\rrangle}}(t;a^{{\eps}})f^\tau(a^{{\eps}})\big) +o_\eps(1) &=
 {\EE} \big(N^{Y,{\eps}}(t;a^{{\eps}})f^\tau(a^{{\eps}})\big) \\ 
 &= {\EE} \big(N^{Y,{\eps}}({\tau};a^{{\eps}})f^\tau(a^{{\eps}})\big) =  {\EE} \big(N^{{\llangle P\rrangle}}({\tau};a^{{\eps}})f^\tau(a^{{\eps}})\big) +o_\eps(1).
 \end{split}
 \]
 So
 \be\label{xx0}
\EE^{ Q_\eps} \big(N^{{\llangle P\rrangle}}(t;a)f^\tau(a) -  N^{{\llangle P\rrangle}}({\tau};a)f^\tau(a)\big) = o_\eps(1).
 \ee
 To get \eqref{mart-nr1} we will pass in this relation to a limit as $\eps\to0$. For this end  for $M>0$ consider the function 
 $$
 G_M(t) =\begin{cases} t, &\text{if } |t|\le M, 
 \\
M \,\text{sgn}\, t,
&\text{otherwise}.
\end{cases}
$$
Since by Assumption \ref{assume-wp1} and Lemma \ref{average-lm} 
$$
 {\EE}^{Q_{ {\eps} }} \big(\sup_{\tau\in[0,T]}  |N^{\llangle P\rrangle}(\tau;a)|^2 \Big)\leqslant{\EE}^{Q_{{\eps}}}\big(C_P(1+\sup_{\tau\in[0,T]}
 |a(\tau)|^{2(m_0\vee1)} )\Big)\leqslant C_{P,m_0}(|v_0|),
 $$
 then for any $\eps$  we have 
 \be\label{xx1}
\EE^{ Q_\eps} \big| (1-G_M)\circ \big(
 N^{\llangle P\rrangle}(t;a) f^\tau(a)  - N^{\llangle P\rrangle}(\tau;a) f^\tau(a) 
 \big)\big| \le C M^{-1}.
 \ee
 As $Q_{\eps'_l}\strela Q_0$, then by Fatou lemma this estimate stays true for $\eps=0$. 
 
 Relations  \eqref{xx0} and  \eqref{xx1}  show  that 
  $$
\EE^{ Q_\eps} \big( G_M\circ (
 N^{\llangle P\rrangle}(t;a) f^\tau(a)  - N^{\llangle P\rrangle}(\tau;a) f^\tau(a) 
 )\big) = o_\eps(1) + o_{M^{-1}}(1). 
 $$
 From this and convergence \eqref{precomp}  we derive the relation 
 $$
 \EE^{ Q_0} \big( G_M\circ (
 N^{\llangle P\rrangle}(t;a) f^\tau(a)  - N^{\llangle P\rrangle}(\tau;a) f^\tau(a) 
 )\big) =  o_{M^{-1}}(1),
 $$
 which  jointly with estimate \eqref{xx1}$_{\eps=0}$ imply  \eqref{mart-nr1} when we send $M$ to $\infty$.
 The  lemma is proved.
 \end{proof}

 \begin{definition}\label{weak-m-s}  A measure $Q$ on the space \eqref{natural} is called a solution of the
  martingale problem for
   effective equation \eqref{effective-equation} with the initial condition \eqref{brr}  if $a(0) = v_0$  \,$Q$--a.s. and

 1)   the process $\{N^{\llangle P\rrangle}(\tau;a)\in\mathbb{C}^n, \;\tau\in[0,T]\}$ (see \eqref{NR}) is a vector--martingale on the filtered 
 space \eqref{natural} with respect to the measure $Q$;

 2) for any $k,j=1,\dots,n$ 
 the process 
 \be\label{kj}
  N^{\llangle P\rrangle}_k(\tau;a) \overline{N^{\llangle P\rrangle}_j}(\tau;a)-2\int_0^\tau(B B^*)_{kj}ds,\; \;\; \;\tau\in[0,T]
  \ee
  (where $BB^* =A$)  is a martingale  on the space  \eqref{natural} with respect to the measure  $Q$, as well as the process 
 $
  N^{\llangle P\rrangle}_k(\tau;a) {N^{\llangle P\rrangle}_j}(\tau;a).
 $\footnote{ 
 That is, 
 $
 \lan  N^{\llangle P\rrangle}_k(\tau;a), \overline{N^{\llangle P\rrangle}_j}(\tau;a) \ran (\tau)= 2\int_0^\tau (BB^*)_{kj} 
 $
 and 
  $
 \lan  N^{\llangle P\rrangle}_k(\tau;a), {N^{\llangle P\rrangle}_j}(\tau;a) \ran (\tau)= 0.
 $
 See Appendix~\ref{r-o-m}. } 
 \end{definition}

 This is a classical definition, written in complex coordinates. See \cite{SV} and
  \cite[Section~5.4]{brownianbook}, where we profited from  \cite[Remark~4.12 ]{brownianbook}
 and the result of  \cite[ Problem~4.13]{brownianbook}   since by Lemma~\ref{average-lm} the
 vector field ${\llangle P\rrangle}$ in \eqref{effective-equation} is locally Lipschitz.
 \medskip

 We have
 \begin{lemma}\label{weak-m-s-lm}
 The  limiting measure $Q_0$   in  \eqref{precomp} is a solution of the
  martingale problem for  effective equation \eqref{effective-equation}, \eqref{brr}.
   \end{lemma}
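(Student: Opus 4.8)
The first requirement in Definition~\ref{weak-m-s}, that $N^{\llangle P\rrangle}(\tau;a)$ be a $Q_0$-vector-martingale, is exactly the content of the already established Lemma~\ref{lemma-m-limit}, so it only remains to verify the quadratic-variation conditions: for each $k,j$ the processes
\[
M_{kj}(\tau;a):=N^{\llangle P\rrangle}_k(\tau;a)\,\overline{N^{\llangle P\rrangle}_j}(\tau;a)-2\int_0^\tau (BB^*)_{kj}\,ds
\qquad\text{and}\qquad
N^{\llangle P\rrangle}_k(\tau;a)\,N^{\llangle P\rrangle}_j(\tau;a)
\]
must be $Q_0$-martingales. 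The plan is to reproduce the scheme of Lemma~\ref{lemma-m-limit}: realize the corresponding objects at the $\eps$-level as genuine $Q_\eps$-martingales coming from the stochastic integral in \eqref{new_form}, show that passing to $M_{kj}$ costs only $o_\eps(1)$ in $L^1(Q_\eps)$, and then send $\eps'_l\to0$ with the same $G_M$-truncation device.

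First I would identify the $\eps$-level martingales. By \eqref{new_form} the martingale $N^{Y,\eps}(\tau;a^\eps)=v_0+\int_0^\tau\Phi_{s\eps^{-1}\Lambda}\Psi\,d\beta^c(s)$ is a stochastic integral with matrix coefficient $\psi^\eps(s)$ from \eqref{ps_ep}. Using the covariations of the standard complex Wiener processes, $d\langle\beta^c_l,\overline{\beta^c_{l'}}\rangle=2\delta_{ll'}\,ds$ and $d\langle\beta^c_l,\beta^c_{l'}\rangle=0$, one computes the matrix quadratic variation $\langle N^{Y,\eps}_k,\overline{N^{Y,\eps}_j}\rangle(\tau)=2\int_0^\tau\cA^\eps_{kj}(s)\,ds$ and $\langle N^{Y,\eps}_k,N^{Y,\eps}_j\rangle(\tau)=0$, so that $N^{Y,\eps}_k\overline{N^{Y,\eps}_j}-2\int_0^\tau\cA^\eps_{kj}\,ds$ and $N^{Y,\eps}_kN^{Y,\eps}_j$ are $Q_\eps$-martingales.

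Next I would transfer these to $M_{kj}$ by two independent replacements. The deterministic compensator converges, $2\int_0^\tau\cA^\eps_{kj}(s)\,ds\to2\int_0^\tau(BB^*)_{kj}\,ds$ uniformly on $[0,T]$ by \eqref{AA} together with $A=BB^*$. For the product part I would write $N^{Y,\eps}=N^{\llangle P\rrangle}+R^\eps$, where by Lemma~\ref{key-a-lemma} (as in \eqref{x00}) $\EE\max_\tau|R^\eps|\to0$, while $\{N^{Y,\eps}\}$ and $\{N^{\llangle P\rrangle}\}$ have uniformly (in $\eps$) bounded higher moments thanks to Assumption~\ref{assume-wp1} and the polynomial growth of $\llangle P\rrangle$ (Lemma~\ref{average-lm}); in addition $\EE|R^\eps|^2\le C$ uniformly, since $R^\eps$ grows like $(1+\sup|a^\eps|)^{m_0}$. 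Splitting $\EE|R^\eps_k\,\overline{N^{Y,\eps}_j}|$ according to whether $|N^{Y,\eps}_j|\le M$ or $>M$, the first piece is bounded by $M\,\EE|R^\eps_k|=M\,o_\eps(1)$ and the second by $C\,\delta(M)^{1/2}$ with $\delta(M)\to0$ by uniform integrability; letting $\eps\to0$ and then $M\to\infty$ yields $\EE^{Q_\eps}\big|N^{\llangle P\rrangle}_k\overline{N^{\llangle P\rrangle}_j}-N^{Y,\eps}_k\overline{N^{Y,\eps}_j}\big|(t)=o_\eps(1)$, and likewise for the unconjugated product. Combining, $\EE^{Q_\eps}\big|M_{kj}(t;a)-\big(N^{Y,\eps}_k\overline{N^{Y,\eps}_j}-2\int_0^t\cA^\eps_{kj}\big)\big|=o_\eps(1)$.

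Finally, fixing $\tau\le t$ and a bounded $\mathcal B_\tau$-measurable $f^\tau$, the $Q_\eps$-martingale property from the first step together with the transfer estimate gives $\EE^{Q_\eps}\big[(M_{kj}(t;a)-M_{kj}(\tau;a))f^\tau(a)\big]=o_\eps(1)$. The functional is continuous but only polynomially bounded, so exactly as at the end of Lemma~\ref{lemma-m-limit} I would truncate with $G_M$, use $Q_{\eps'_l}\strela Q_0$ and Fatou's lemma, and let $M\to\infty$; here the uniform integrability of $\{M_{kj}\}$ under $Q_\eps$ replaces the second-moment bound used there, and it is available precisely because the strict inequality $m_0'>m_0\vee1$ provides a moment of order slightly above $2(m_0\vee1)$ for the quadratic functional $M_{kj}$. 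The same argument applied to $N^{\llangle P\rrangle}_kN^{\llangle P\rrangle}_j$ (whose $\eps$-level compensator vanishes) completes the verification. The main obstacle is precisely this step~3 coupled with step~2: Lemma~\ref{key-a-lemma} controls $R^\eps$ only in $L^1$, whereas the quadratic-variation identities involve products of the $N$'s, so the convergence must be upgraded by combining the $L^1$-smallness of $R^\eps$ with the uniform higher-moment bounds, and it is this same integrability that legitimizes the passage to the weak limit with the unbounded functional $M_{kj}$.
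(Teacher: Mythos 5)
Your proposal is correct and takes essentially the same approach as the paper's proof: condition 1) is delegated to Lemma~\ref{lemma-m-limit}, the $\eps$-level quadratic martingales are identified via the complex It\^o/covariation computation for $N^{Y,\eps}=v_0+\int_0^\tau\psi^\eps\,d\beta^c$, the compensator converges by \eqref{AA}, the product terms are transferred using Lemma~\ref{key-a-lemma} combined with the uniform moment bounds coming from $m_0'>m_0\vee1$, and the passage to the limit $Q_{\eps'_l}\rightharpoonup Q_0$ uses the same $G_M$-truncation and uniform-integrability device as in Lemma~\ref{lemma-m-limit}. Your explicit splitting of $\EE|R^\eps_k\,\overline{N^{Y,\eps}_j}|$ by truncation plus uniform integrability is simply a spelled-out version of the step the paper compresses into ``closely repeating the proof of \eqref{x00}''.
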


 \begin{proof} Since  condition 1) in Definition \ref{weak-m-s} has been verified in Lemma \ref{lemma-m-limit}, 
  it remains to check   condition 2).  For the second term in \eqref{kj} we have, as ${\ep}\to0$,
 \be\label{xx3}
 \int_0^{\tau}\big(\psi^{\ep}(s)(\psi^{\ep}(s))^*\big)_{kj}ds=\int_0^\tau e^{i{\ep}^{-1}(\lambda_k-\lambda_j)s}(\psi\psi^*)_{kj}ds\to\tau A_{kj},
 \ee
 where the matrix $(A_{kj})$ is given by \eqref{matrix-effective}. Let us pass to the first term. Since by
  \eqref{a-equation2} and  \eqref{ps_ep}
  $$
  N^{Y,{\ep}}(\tau)
  =v_0+\int_0^\tau \psi^{\ep}(s) d\beta^c(s),  \qquad  \psi^{\ep}_{lj}(s) =
   e^{i s{\ep}^{-1}\lambda_l}\Psi_{lj}, 
  $$
  then by   the complex It\^o  formula (see Appendix~\ref{a_ito}) and Assumption~2.1, 
     for any $k, j\in\{1,\dots,n\}$  the process
 \be\label{20}
  N_k^{Y,{\ep}}(\tau) \overline{ N_j^{Y,{\ep}}}(\tau) -2\int_0^\tau(\psi^{\ep}(s)(\psi^{\ep}(s))^*)_{kj}ds,
  \ee
 is a martingale. As when verifying 1), we will compare  \eqref{kj} with \eqref{20}. To do this let us consider
 \[ \begin{split}
  N_k^{\llangle P\rrangle}(\tau ;a^{\ep})& \overline{ N_j^{{\llangle P\rrangle}}}(\tau;a^{\ep}) -
  N_k^{Y,{\ep}}(\tau;a^{\ep}) \overline{ N_j^{Y,{\ep}}}(\tau;a^{\ep})\\
  &=\big( a_k^\eps(\tau) - \int_0^\tau {\llangle P\rrangle}_k(a^\eps(s))\,ds \big)
  \big( \bar a_j^\eps(\tau) - \int_0^\tau  \bar {\llangle P\rrangle}_j(a^\eps(s))\,ds \big)\\
  &-\big( a_k^\eps(\tau) - \int_0^\tau Y_k(a^\eps(s), s\eps^{-1} )\,ds \big)
  \big( \bar a_j^\eps(\tau) - \int_0^\tau  \bar Y_j(a^\eps(s) , s\eps^{-1})\,ds \big) =: M_{kj} (a^\eps, \tau). 
 \end{split}
 \] 
 Closely repeating the proof of \eqref{x00}  we get that 
 $$
\sup_{0\le \tau\le T} \EE \big| M_{kj}(a^\eps; \tau)\big| =o_\eps(1) \quad \text{as} \;\; \eps\to0.
 $$ 
 Since \eqref{20} is a martingale, then this relation and \eqref{xx3} 
 imply  that \eqref{kj} is a martingale due to  the same reasoning by which relations 
 \eqref{x00} and the fact that $N^{Y,\eps}(\tau; a^\eps)$ is a martingale imply that $N^{\llangle P\rrangle}(\tau;a)$ also is one. 
 To pass to a limit as $\eps\to0$ the  proof 
 uses that the random variables like 
 $
 N_k^{Y,{\ep}}(\tau;a^{\ep}) \overline{ N_j^{Y,{\ep}}}(\tau;a^{\ep})
 $
 are integrable uniformly in $\eps>0$ due to Assumption~\ref{assume-wp1}, where $m'_0>m_0$.

  Similarly for any $k$ and $j$, 
 the process 
 $
  N_k^{\llangle P\rrangle}(\tau) {N_j^{\llangle P\rrangle}}(\tau) 
 $
 also is a martingale. The lemma's assertion  is established.
  \end{proof}
 
 Now we can prove Proposition \ref{average-K-th}:

 \begin{proof}[of Proposition \ref{average-K-th}]
It is well known that a  solution of the   martingale problem for
a stochastic differential equation is its weak solution. Instead of referring to a corresponding theorem
(see   \cite{SV} or 
\cite[Section~5.4]{brownianbook}), again following \cite{Khas68},
 we give a short direct proof, based on another strong result from the stochastic calculus.
 By Lemma \ref{weak-m-s-lm} and the martingale representation theorem for complex processes  (see Appendix \ref{r-o-m}), we know that  there exists an extension $(\hat{\Omega},\hat{\mathcal{B}},\hat{\mathbf{P}})$ 
 of the probability space $(\tilde\Omega,\mathcal{B},Q_0)$ and on it exist standard
 independent   complex Wiener processes $\beta^c_1(\tau),\dots,\beta^c_n(\tau)$ such that
 \[da_j(\tau)-{\llangle P\rrangle}_j(a)d\tau=\sum_{l=1}^nB_{jl}d\beta^c_l(\tau), \quad
  j=1,\dots,n,\]
 where the dispersion
  $B$ is a nonnegative Hermitian matrix, satisfying $BB^*=A$. Therefore the measure $Q_0$  is a weak solution of
  effective equation \eqref{effective-equation}. We thus proved the assertion of the proposition.
 \end{proof}

By Lemma \ref{average-lm}, in effective equation \eqref{effective-equation} the drift term ${\llangle P\rrangle}$ is locally Lipschitz.
So its strong solution (if exists) is unique. By Proposition~\ref{average-K-th}  the measure $Q_0$ is  a weak solution of eq.~\eqref{effective-equation}.
Hence, by the Yamada-Watanabe theorem \cite[Section~5.3.D]{brownianbook}, \cite[Chapter~8]{SV} 
a strong solution for the effective equation exists, and 
its weak solution is unique.  Therefore the limit $Q_0=\lim_{{\ep}_l\to0 }Q_{{\ep}_l}$ does not depend on the sequence ${\ep}_l\to0$. So the 
convergence  holds as $\eps\to0$ and we thus  have established

 \begin{theorem}\label{average-thm} 
 For any 
 $v_0\in\mathbb{C}^n$  the  solution $a^{\ep}(\tau;v_0)$ of  problem \eqref{a-equation2}, \eqref{same} satisfies
 \be\label{conv}
 \cD(a^{{\ep}}(\cdot;v_0))\rightharpoonup  Q_0\text{ in }\mathcal{P}(C([0,T],\mathbb{C}^n))\quad  \text{ as }{\ep}\to0,
 \ee
 where $Q_0$ is the law of a   unique weak solution $a^0(\tau;v_0)$ 
  of effective equation \eqref{effective-equation}, \eqref{brr}.
  \end{theorem}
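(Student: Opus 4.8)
The plan is to upgrade the subsequential convergence already supplied by the compactness lemma to convergence of the whole family $\{Q_\eps\}$, by showing that \emph{every} subsequential limit is one and the same measure, namely the law of the unique weak solution of the effective equation.

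First I would recall that Lemma~\ref{pre-compact1} guarantees precompactness of $\{Q_\eps : \eps\in(0,1]\}$ in $\mathcal{P}(C([0,T],\C^n))$, so that any sequence $\eps_l\to0$ admits a subsequence $\eps_l'$ with $Q_{\eps_l'}\strela Q_0$ as in \eqref{precomp}. By Proposition~\ref{average-K-th}, every such limit $Q_0$ is a weak solution of the effective equation \eqref{effective-equation}, \eqref{brr}; in particular the initial condition survives the limit, because the evaluation map $a\mapsto a(0)$ is continuous on $C([0,T],\C^n)$ and $a(0)=v_0$ holds $Q_\eps$-a.s. for every $\eps$, whence $a(0)=v_0$ holds $Q_0$-a.s.

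The decisive step is uniqueness. Since the averaged drift $\llangle P\rrangle$ is locally Lipschitz by Lemma~\ref{average-lm} and the dispersion $B$ is constant, I expect pathwise uniqueness to hold for \eqref{effective-equation}, \eqref{brr} after localizing on the balls $\BR(\C^n)$ and using the a priori moment bound, inherited in the limit from \eqref{moment1} via Fatou's lemma, to control the exit times from these balls. Existence of a weak solution is already in hand from Proposition~\ref{average-K-th}, so the Yamada--Watanabe theorem then yields simultaneously a strong solution and uniqueness in law of weak solutions.

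Finally I would conclude by the standard compactness argument: every subsequential weak limit of $\{Q_\eps\}$ is a weak solution of \eqref{effective-equation}, \eqref{brr}, hence by uniqueness in law all such limits coincide with a single measure $Q_0$. A relatively compact sequence in a metric space all of whose limit points agree must converge, so $Q_\eps\strela Q_0$ as $\eps\to0$, which is exactly \eqref{conv}. I expect the main obstacle to be the uniqueness step --- specifically, justifying pathwise uniqueness under only \emph{local} Lipschitz continuity of the drift, which is precisely why the uniform moment bounds of Assumption~\ref{assume-wp1}, together with their persistence in the limit, are essential.
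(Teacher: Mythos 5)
Your proposal is correct and follows essentially the same route as the paper: precompactness (Lemma~\ref{pre-compact1}), identification of every subsequential limit as a weak solution of the effective equation (Proposition~\ref{average-K-th}), and then pathwise uniqueness for the locally Lipschitz drift combined with the Yamada--Watanabe theorem to get uniqueness in law, so that all limit points coincide and the whole family converges. The only cosmetic difference is that you invoke the limiting moment bound to control exit times in the uniqueness step, whereas this is not needed: solutions are by definition continuous on $[0,T]$, so localization and Gronwall already give pathwise uniqueness.
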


\begin{remark}\label{r_average-thm}
1) A straightforward 
 analysis of the theorem's proof shows that it goes without changes if $a^{{\ep}}(\tau)$ solves eq.~\eqref{a-equation2} with an initial data $v_{\eps 0}$ which 
converges to $v_0$ when $\eps\to0$. So
 \be \label{conv_new}
 \cD(a^{{\ep}}(\cdot; v_{\eps 0} ))\rightharpoonup  Q_0\text{ in }\mathcal{P}(C([0,T],\mathbb{C}^n))\quad  \text{ as }{\ep}\to0,\;\; \text{if}\;\;  v_{\eps 0} \to v_0 \;\;\text{when}\;\;
 \eps\to0.
 \ee 
 2) Denoting as before $\cD (a^\eps(\cdot; v_0)) = Q_\eps \in \cP({C([0,T]; \C^n)})$
  we use the Skorokhod representation theorem (see \cite[Section~6]{Bill})   to find a sequence 
 $ \eps_j\to0$ and processes $\xi_j(\tau)$, $0\le\tau\le T$, $j=0,1,\dots$, such that 
 $
 \cD(\xi_0) =Q_0, 
 $
  $
 \cD(\xi_j) =Q_{\eps_j}, 
 $
and $\xi_j\to\xi_0$ in $C([0,T]; \C^n)$, 
 a.s. Then \eqref{moment1} and Fatou's lemma imply that 
\be\label{a0}
\EE\| a^0\|^{2m_0'}_{C([0,T]; \C^n)}  = 
\EE^{Q_0} \| a\|^{2m_0'}_{C([0,T]; \C^n)}  =  \EE \| \xi_0\|^{2m_0'}_{C([0,T]; \C^n)} 
\le C_{m'_0} (|v_0|, T). 
\ee
\end{remark}

The result of Theorem  \ref{average-thm}
 admits an immediate generalisation to the case when the initial data $v_0$ in  \eqref{same} is a random variable:

\begin{amplification}\label{amp_ran_ini}  Let $v_0$ be a r.v., independent from the Wiener process $\beta^c(\tau)$. Then  still the 
convergence \eqref{conv} holds. 
\end{amplification}
\begin{proof}   It suffices to establish \eqref{conv} when $a^\eps$ is a weak solution of the problem. Now, let 
$(\Omega', \mathcal{F}',\mathbf{P}')$ be another probability space and $\xi_0^{\om'}$ be a r.v. on $\Omega'$, distributed 
as $v_0$. 
Then $a^{\eps\om}(\tau; \xi_0^{\om'})$ is a weak solution of \eqref{a-equation2}, \eqref{same}, 
 defined on the probability space $\Omega'\times\Omega$.  
Take  $f$ to be  a bounded continuous function on $C([0,T], \mathbb{C}^n)$. Then by the theorem above,  for each $\omega'\in\Omega'$ 
 $$
 \lim_{\eps\to0}\mathbf{E}^\Omega f(a^{\eps \om} (\cdot ;\xi_0^{\omega'}))=\mathbf{E}^\Omega f(a^{0 \om}(\cdot; \xi_0^{\omega'})).
 $$
Since $f$ is bounded, then by the Lebesgue dominated  convergence theorem we have 
$$
\lim_{\eps\to0} \mathbf{E} f(a^{\eps}(\cdot; v_0))=
\lim_{\eps\to0}\mathbf{E}^{\Omega'}\mathbf{E}^\Omega  f(a^{\eps\om}(\cdot;\xi_0^{\omega'}))=\mathbf{E}^{\Omega'}\mathbf{E}^\Omega f(a^{0\om}(\cdot ;\xi_0^{\omega'})) = \mathbf{E} f(a^{0}(\cdot; v_0)).
$$
This implies the required convergence \eqref{conv}.
\end{proof}

The convergence, stated in the last amplification, holds uniformly in the class of random  initial data $v_0$, a.s. bounded by a fixed
constant. To state the result we have to introduce  a distance in the space of measures. 

\begin{definition}\label{d_dual-Lip}
Let $M$ be a Polish (i.e. a     complete and separable) metric space. For any two measures $ \mu_1,\mu_2\in\cP(M)$ we define 
the dual-Lipschitz distance between them as 
\[
\|\mu_1-\mu_2\|_{L,M}^*:=\sup_{f\in C_b (M),\,  |f|_L\leqslant1}\Big|\langle f ,\mu_1\rangle-\langle f ,\mu_2\rangle\Big|\le2,
 \]
   where  $|f|_L= |f|_{L,M}=
    \,$Lip$\,f+\|f\|_{C(M)}$.
 \end{definition} 
 
 In the definition and below we denote 
    \be\label{recall}
   \langle f ,\mu\rangle :=\int_M f(m)\mu(dm).
   \ee

\begin{example}
 Consider the Polish spaces $C([0,T], \mathbb{C}^n)$ and $\C^n$ and the mappings
 $$
 \Pi_t: C([0,T], \mathbb{C}^n) \to \C^n, \quad  a(\cdot) \mapsto a(t), \qquad 0\le t\le T.
 $$
Noting that 
$
| f\circ \Pi_t|_{L, \C^n} \le 
| f |_{L, C([0,T], \mathbb{C}^n)}$ for each $t$ 
we get that 
\be\label{HG}
\|\Pi_t\circ  \mu_1- \Pi_t\circ \mu_2\|_{L, \C^n}^*  \le \|   \mu_1- \mu_2\|_{L, C([0,T], \mathbb{C}^n)}^* 
\ee
for all $\mu_1, \mu_2 \in \cP(C([0,T], \mathbb{C}^n))$ and all $0\le t\le T$ (where 
$ \Pi_t\circ  \mu_j \in\cP(\C^n)$ stands for  the image of $\mu_j$ under the mapping $\Pi_t$).
\end{example}

 The dual-Lipschitz  distance converts  $\mathcal{P}(M)$ to a complete metric space and 
    induces on it  a  topology, equivalent to the  weak convergence of measures, e.g. see \cite[Section 11.3]{Dud} and \cite[Section 1.7]{BK}).

\begin{proposition}\label{p_ran_ini}  Under the assumption of Amplification \ref{amp_ran_ini}
 let  the r.v. $v_0$ be  such that $|v_0| \le R$ a.s.,
for some $R>0$. Then the rate of convergence in \eqref{conv} with respect to the dual-Lipschitz distance depends only on $R$. 
\end{proposition}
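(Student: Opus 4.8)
The plan is to reduce the claim about random initial data to a statement about deterministic ones, and then to upgrade the pointwise convergence of Theorem~\ref{average-thm} to convergence uniform over the compact ball $\BR(\C^n)$. First I would disintegrate with respect to the initial datum. Put $\mu=\cD(v_0)\in\cP(\BR(\C^n))$, and for a fixed point $w\in\BR(\C^n)$ write $Q^w_\eps=\cD(a^\eps(\cdot;w))$ for the law of the solution of \eqref{a-equation2}, \eqref{same} issued from $w$, and $Q^w_0$ for the law of the solution of effective equation \eqref{effective-equation} issued from $w$. Since $v_0$ is independent of $\beta^c$, conditioning on the value of $v_0$ gives $Q_\eps=\int Q^w_\eps\,\mu(dw)$, while the computation in the proof of Amplification~\ref{amp_ran_ini} identifies the limit measure as $Q_0=\int Q^w_0\,\mu(dw)$. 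Setting $g_\eps(w):=\|Q^w_\eps-Q^w_0\|^*_{L,C([0,T],\C^n)}$, the linearity of $f\mapsto\langle f,\cdot\rangle$ and the triangle inequality give, for every $f$ with $|f|_L\le1$,
\[
|\langle f,Q_\eps\rangle-\langle f,Q_0\rangle|\le\int g_\eps(w)\,\mu(dw)\le\sup_{w\in\BR(\C^n)}g_\eps(w).
\]
Taking the supremum over such $f$ yields $\|Q_\eps-Q_0\|^*_{L,C([0,T],\C^n)}\le\sup_{w\in\BR(\C^n)}g_\eps(w)=:\bar g_R(\eps)$, a quantity that depends only on $R$ (the fixed $T$ being suppressed). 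So it suffices to show $\bar g_R(\eps)\to0$ as $\eps\to0$.

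Next I would prove $\bar g_R(\eps)\to0$ by contradiction and compactness. If it failed, there would be $\delta>0$, a sequence $\eps_j\to0$ and points $w_j\in\BR(\C^n)$ with $g_{\eps_j}(w_j)\ge\delta$; passing to a subsequence we may assume $w_j\to w_*\in\BR(\C^n)$. Remark~\ref{r_average-thm}(1), applied with the converging initial data $v_{\eps_j0}=w_j\to w_*$, gives $Q^{w_j}_{\eps_j}\rightharpoonup Q^{w_*}_0$. If moreover the effective law is continuous in its initial datum, i.e. $Q^{w_j}_0\rightharpoonup Q^{w_*}_0$, then, because the dual-Lipschitz distance metrizes weak convergence, the triangle inequality forces $g_{\eps_j}(w_j)=\|Q^{w_j}_{\eps_j}-Q^{w_j}_0\|^*_{L,C([0,T],\C^n)}\to\|Q^{w_*}_0-Q^{w_*}_0\|^*_{L,C([0,T],\C^n)}=0$, contradicting $g_{\eps_j}(w_j)\ge\delta$.

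The remaining and principal obstacle is thus the continuity $Q^{w_j}_0\rightharpoonup Q^{w_*}_0$, which I would establish by the martingale-problem method used throughout Section~\ref{s_4}. The moment bound \eqref{a0}, which depends on the initial datum only through $|w_j|\le R$, combined with a H\"older estimate of the type of Lemma~\ref{pre-compact1} applied to the effective equation, makes $\{Q^{w_j}_0\}$ tight in $C([0,T],\C^n)$, uniformly for $w_j\in\BR(\C^n)$. Every weak limit point solves the martingale problem for \eqref{effective-equation} with initial condition $w_*$: one passes to the limit in conditions 1) and 2) of Definition~\ref{weak-m-s}, using $a(0)=w_j\to w_*$ and the uniform integrability supplied by the exponent $m_0'>m_0$ in Assumption~\ref{assume-wp1} to control the quadratic-variation term \eqref{kj}. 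Since the effective equation is well posed (uniqueness of its weak solution having been established via Yamada--Watanabe), this limit point must be $Q^{w_*}_0$, so the whole sequence converges and the contradiction argument closes. The delicate points to monitor are the uniformity over $\BR(\C^n)$ of the tightness and integrability estimates and the limit passage in \eqref{kj}; everything else is routine.
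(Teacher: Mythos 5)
Your proof is correct and follows essentially the same route as the paper's: reduce to deterministic initial data in $\bar B_R(\C^n)$, argue by contradiction using compactness of the ball, and conclude by combining the convergence \eqref{conv_new} of Remark~\ref{r_average-thm}(1) with the continuity of the effective-equation law $v'\mapsto\cD(a^0(\cdot;v'))$ in the initial condition. The only differences are cosmetic: the paper first extracts weak limits of both families of laws via pre-compactness and then identifies them (you identify the limits directly), and it cites the continuity of the effective law as well known, whereas you sketch its proof via tightness and the martingale problem.
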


\begin{proof}  
The proof of  Amplification \ref{amp_ran_ini}   shows that it suffices to verify that for a non-random initial data $v_0 \in \bar B_R(\C^n)$ the rate of convergence in \eqref{conv} depends only on $R$.  
 Assume the opposite. Then there exist a $\delta>0$, a sequence 
$\eps_j\to0$ and vectors $v_j\in \bar B_R(\C^n)$ such that 
\begin{equation}\|\cD(a^{\eps_j}(\cdot; v_j))-\cD(a^0(\cdot; v_j))\|_{L,C([0,T],\mathbb{C}^n)}^*\geqslant \delta.\end{equation}
By the same argument as in the proof of Lemma \ref{pre-compact1}, we know that the two sets of probability measures 
$\{\cD(a^{\eps_j}(\cdot; v_j))\}$ and $\{\cD(a^0(\cdot; v_j))\}$ are pre-compact 
 in  $C([0,T],\mathbb{C}^n)$. Therefore, there exists a sequence $k_j\to\infty$ such that  $\eps_{k_j}\to0$, $v_{k_j}\to v_0$  and 
$$
\cD(a^{\eps_{k_j}}(\cdot; v_{k_j}))\rightharpoonup \tilde Q_0,\;\;\;
 \cD(a^{0}(\cdot; v_{k_j}))\rightharpoonup Q_0\; \text{in}\; \cP(C([0,T],\mathbb{C}^n)).
$$
Then 
\begin{equation}\label{contr1}
 \;\|\tilde Q_0-Q_0\|_{L,C([0,T],\mathbb{C}^n)}^*\geqslant\delta.
\ee
Since in the well-posed eq. \eqref{effective-equation} 
the drift and  dispersion   are  locally Lipschitz, then  the law 
$\cD(a^0(\cdot;v'))$ is continuous with respect to the initial condition $v'$.\footnote{This is well known, and can be easily 
proved using the estimate in Remark~\ref{r_average-thm}.2).
}
 Therefore $Q_0$ is the unique weak solution of the effective equation \eqref{effective-equation} with initial condition $a^0(0)=v_0$. By \eqref{conv_new} the measure 
$\tilde Q_0$ also is a weak solution of  the problem  \eqref{effective-equation}, \eqref{brr}. 
 Hence, $Q_0=\tilde Q_0$. This   contradicts  \eqref{contr1} and proves  the assertion.
\end{proof}

  We continue  with an obvious application of Theorem \ref{average-thm} to solutions $v^\eps(\tau;v_0)$ of the original 
  eq.~\eqref{v-equation-slowtime}. 
   Consider the action-mapping
  $
  (z_1, \dots, z_n) \mapsto (I_1, \dots, I_n)=:I
   $
   (see \eqref{Iphi}).   Since the interaction representation \eqref{interac} does not change the  actions, then we have from the theorem that 
   
  \begin{corollary}\label{c_4.7}
  For any $v_0$, as  ${\ep}\to0$
  \be\label{c1}
    \cD(I\big( v^{{\ep}}(\cdot;v_0)) \big)\rightharpoonup  I\circ \cD(a(\cdot;v_0))\quad\text{in}\quad
   \mathcal{P}(C([0,T],\R_+^n)) ,
  \ee
  where $a(\cdot;v_0)$ is a unique weak solution of effective equation \eqref{effective-equation}, \eqref{brr}.
  \end{corollary}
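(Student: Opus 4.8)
The plan is to realize \eqref{c1} as the image, under a single continuous map, of the weak convergence already furnished by Theorem~\ref{average-thm}; the whole statement then reduces to the continuous mapping theorem. The first and essential observation is that passing to the interaction representation \eqref{interac} does not alter the actions: since each $\Phi_w$ is unitary one has $|v_k(\tau)|=|a_k(\tau)|$, whence $I_k(v^\eps(\tau;v_0))=I_k(a^\eps(\tau;v_0))$ for all $k$ and all $\tau\in[0,T]$. Thus, as random elements of $C([0,T],\R_+^n)$, the processes $I(v^\eps(\cdot;v_0))$ and $I(a^\eps(\cdot;v_0))$ coincide, so that $\cD(I(v^\eps(\cdot;v_0)))=I\circ\cD(a^\eps(\cdot;v_0))$, where $I$ now denotes the map $a(\cdot)\mapsto(I_1(a(\cdot)),\dots,I_n(a(\cdot)))$ from paths to paths.

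Next I would check that this lifted map $I\colon C([0,T],\C^n)\to C([0,T],\R_+^n)$ is continuous. Each component $I_k(z)=\tfrac12|z_k|^2$ is continuous, hence uniformly continuous on bounded subsets of $\C^n$, so if $a_j\to a$ uniformly on $[0,T]$ (the images $a_j([0,T])$ then lying in a common ball) one gets $I(a_j(\cdot))\to I(a(\cdot))$ uniformly, which is precisely continuity of the lift. With this in hand, the continuous mapping theorem for weak convergence (see e.g. \cite{Bill}) guarantees that $\mu_\eps\rightharpoonup\mu$ in $\cP(C([0,T],\C^n))$ implies $I\circ\mu_\eps\rightharpoonup I\circ\mu$ in $\cP(C([0,T],\R_+^n))$.

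Combining these with Theorem~\ref{average-thm}, which gives $\cD(a^\eps(\cdot;v_0))\rightharpoonup Q_0=\cD(a(\cdot;v_0))$, one obtains $\cD(I(v^\eps(\cdot;v_0)))=I\circ\cD(a^\eps(\cdot;v_0))\rightharpoonup I\circ\cD(a(\cdot;v_0))$, which is exactly \eqref{c1}. The only point requiring any care is the continuity of the lifted action map on the path space; everything else is an immediate consequence of Theorem~\ref{average-thm} and the continuous mapping theorem, in accordance with the paper's billing of this corollary as an obvious application.
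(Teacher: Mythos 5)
Your proposal is correct and is essentially the paper's own argument: the paper derives Corollary~\ref{c_4.7} from Theorem~\ref{average-thm} by the single observation that the interaction representation \eqref{interac} does not change the actions, leaving the continuity of the lifted action map and the continuous mapping theorem implicit as an ``obvious application.'' You have simply made those implicit steps explicit, which is a faithful (and slightly more careful) rendering of the same proof.
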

 \begin{example} If the drift $P$ in \eqref{v-equation-slowtime} is globally Lipschitz, that is, $\text{Lip}(P)\leqslant M$ for some $M>0$,
 then  it is not hard to see that Assumption~\ref{assume-wp1} holds, so 
  Theorem~\ref{average-thm} and Corollary~\ref{c_4.7} apply. A more interesting example is discussed below 
   in Section~\ref{s_9}. 
 \end{example}

  \subsection{Proof of Lemma \ref{key-a-lemma}}\label{proof-key-a-lemma}
    In this subsection we denote by  $\mathcal{H}_k(r;c_1,\dots)$, $k=1,2, \dots$,
      non-negative functions of $r>0$ which go to zero with $r$ and
     depend on  parameters $c_1, \dots$ (dependence of  $\mathcal{H}_k$'s on $T$  and $P$      is not indicated).
      Also, for  an event $Q$ we denote ${\EE}_{Q}f(\xi)={\EE}\mathbf{1}_{Q}f(\xi)$.

  For any ${M_1}\geqslant1$ we set
 $$
 \cE^1=  \cE^{1\eps}_{M_1}
 =\{\omega\in\Omega:\sup_{0\leqslant\tau\leqslant T}|a^{{\ep}}(\tau)|\leqslant {M_1}\}.
 $$
 By  Assumption \ref{assume-wp1} and Chebyshev's inequality,
 $$\mathbf{P}(\Omega\setminus \cE^1 )\leqslant\cH_1({M_1}^{-1}; |v_0|).
 $$
 Recalling that $\tilde y$ was defined in \eqref{ty}, by  Lemma \ref{average-lm} we have that
\[
 | \tilde y(a^{{\ep}}(s),s{\ep}^{-1}) |\leqslant |\Ye(a^{{\ep}}(s),s{\ep}^{-1})|+|{\llangle P\rrangle}(a^{{\ep}}(s))|
 \leqslant 2 \CC | a^{\ep}(s)|^{m_0}.
\]
 So, abbreviating $\tilde y(a^{{\ep}}(s),s{\ep}^{-1}) $ to $\tilde y(s)$,   in view of \eqref{moment1} we have:
 \[\begin{split}&{\EE}_{\Omega\setminus {\cE^1} }\max_{\tau\in[0,T]}|\int_0^{ \tau}\tilde y(s)ds
 |\leqslant \int_0^{T}{\EE}\big(\mathbf{1}_{\Omega\setminus{\cE^1}}|\tilde y(s)|\big)ds\\
 &\leqslant  2\CC \big( P(\Omega\setminus{\cE^1})\big)^{1/2}(\int_0^{T}{\EE}|a^{{\ep}}(s)|^{2m_0}ds)^{1/2} \\
& \leqslant 2\CC(\cH_1({M_1}^{-1}; |v_0|))^{1/2} =:\cH_2({M_1}^{-1}; |v_0|).
 \end{split}\]
 Now we should  estimate
 ${\EE}_{{\cE^1}}\max_{\tau\in[0,T]}|\int_0^{ \tau}\tilde y(s)ds|$.  For an $ M_2\geqslant1$
 consider the event 
 $$
 \cE^2= \cE^{2 \eps}_{M_2}
 =\{\omega\in\Omega:\|a^{\ep}\|_{1/3}\leqslant M_2\}
 $$ 
 (see \eqref{Hold}).
 Then by \eqref{compact-up1}
 \[\mathbf{P}(\Omega\setminus {\cE^2} )\leqslant \cH_3(M_2^{-1}; |v_0|).\]
 Therefore,
 \[{\EE}_{\Omega\setminus\cE^2}\max_{\tau\in[0,T]}|\int_0^{\tau}\tilde y(s)ds|\leqslant \big(\mathbf{P}(\Omega\setminus {\cE^2})\big)^{1/2}(C_P\int_0^{T}{\EE}|a^{{\ep}}(s)|^{2m_0}ds)^{1/2}\leqslant \cH_4(M_2^{-1}; |v_0|)\,.
\]
It remains to bound
 $ \ 
 {\EE}_{{\cE^1}\cap{\cE^2}}\max_{\tau\in[0,T]}\big|\int_0^{\tau} \tilde y(s)ds\big|.$

 We set
$$
N= [T/\sqrt{\eps}] +1,  \qquad L= T/N.
$$
Then
$C^{-1} \sqrt{\eps} \leqslant L \leqslant C  \sqrt{{\ep}}$
and
$ c^{-1} /  \sqrt{\eps} \leqslant N \leqslant c  /  \sqrt{\eps}$
for some constants $C$ and $ c$. We consider  a partition of  interval  $[0,T]$ by  points $\grr_l=lL$, $l=0,\dots, N$,  and denote 
$$
\eta_l=\int_{\grr_l}^{\grr_{l+1}}\tilde y(s)ds, \quad  l=0,\dots,N-1.
$$
For any $\tau\in[0,T]$ let us find  $l=l(\tau) $ such that $\tau\in[\grr_{l},\grr_{l+1}]$. Then
\[|\int_0^{\tau}\tilde y(s)ds|\leqslant |\eta_1|+\dots+|\eta_l|+|\int_{\grr_l}^{\tau}\tilde y(s) ds|.\]
If $\omega\in\cE^1$, then
$
|\int_{\grr_l}^{\tau}\tilde y(s)ds|\leqslant 2\CC {M_1}^{m_0}L.
$
Therefore
\begin{equation}\label{y-m-upper1}
{\EE}_{{\cE^1}\cap{\cE^2}} \max_{0\leqslant\tau\leqslant T}\big|\int_0^{\tau}\tilde y(s)ds\big|  \leqslant 2 \CC {M_1}^{m_0}L+\sum_{l=0}^{N-1}{\EE}_{\cE^1\cap\cE^2}|\eta_l|,
\end{equation}
and it  remains to estimate the integrals 
${\EE}_{\cE^1\cap\cE^2}|\eta_l|$ for  $  l=0,\dots,N-1$.
Observe that
\[|\eta_l|\leqslant\Big|\int_{\grr_l}^{\grr_{l+1}}[\tilde y(a^{\ep}(s),s{\ep}^{-1})-\tilde y(a^{\ep}(\grr_l),s{\ep}^{-1})]ds\Big|
 +\Big|\int_{\grr_l}^{\grr_{l+1}}\tilde y(a^{\ep}(\grr_l),s{\ep}^{-1})ds\Big|
 =: |U_l^1| + |U_l^2|.
\]
Since $\tilde y(a^{\ep},\tau{\ep}^{-1})=(\Phi_{\tau{\ep}^{-1}\Lambda})_* P(a^{\ep})-{\llangle P\rrangle}(a^{\ep})$ and $P,{\llangle P\rrangle}\in
 \text{Lip}_{m_0}(\mathbb{C}^n,\mathbb{C}^n)$, then for $\omega\in{\cE^1}\cap\cE^2$ 
 the integrand in  $U_l^1$ is bounded by
 $$2\CC {M_1}^{m_0}\sup_{\grr_l\leqslant s\leqslant\grr_{l+1}}|a^{\ep}(s)-a^{\ep}(\grr_l)|\leqslant 2\CC {M_1}^{m_0}M_2L^{1/3}.$$
So
 $$|U_l^1| \leqslant  2\CC {M_1}^{m_0}M_2L^{4/3}.$$
Now consider integral  $U_l^2$. By the definition of $\tilde y(a^{\ep},\tau{\ep}^{-1})$, we have
\[
U_l^2=\int_{\grr_l}^{\grr_{l+1}} \Ye(a^{{\ep}}(\grr_l),s{\ep}^{-1})ds\,- \, L{\llangle P\rrangle}(a^{{\ep}}(\grr_l)\big)=:Z^1+Z^2.
\]
For  integral $Z^1$, making the change of variable $s=\grr_l+{\ep} x$ with  $x\in[0,\frac{L}{{\ep}}]$, we have 
$$
Z^1={\ep}\int_0^{L/{\ep}}\Ye(a^{{\ep}}(\grr_l),\grr_l{\ep}^{-1}+x)dx.
$$
Since 
$$
\Ye(a^{{\ep}}(\grr_l), \grr_l{\ep}^{-1}+x)=\Phi_{\grr_l{\ep}^{-1}\Lambda}\circ\Phi_{x\Lambda}
P\big(\Phi_{-x\Lambda}(\Phi_{-\grr_l{\ep}^{-1}\Lambda}a^{\ep}(\grr_l))\big), 
$$
then 
$$
Z^1=
L\Phi_{\grr_l{\ep}^{-1}\Lambda}\Big(\frac{{\ep}}{L}\!
\int_0^{L/{\ep}}\Phi_{x\Lambda} 
P\big(\Phi_{-x\Lambda} \Big(\Phi_{-\grr_l{\ep}^{-1}\Lambda}a^{\ep}(\grr_l) \big)\big)dx\Big)
= L\Phi_{\grr_l{\ep}^{-1}\Lambda} \llangle P \rrangle^{L/{\ep}}\big(\Phi_{-\grr_l{\ep}^{-1}\Lambda}a^{\ep}(\grr_l)\big)
$$
(see \eqref{part_aver}). 
As $L/{\ep}\sim{\ep}^{-1/2}\gg1$ and $\big| \Phi_{-\grr_l{\ep}^{-1}\Lambda}a^{\ep}(\grr_l) \big|=  |a^{\ep}(\grr_l)|\leqslant {M_1}$ for
 $\omega\in{\cE^1}\cap\cE^2$, then  by Lemma \ref{average-lm} the partial averaging 
 $
  \llangle P \rrangle^{L/{\ep}} 
 $
 is close to the complete averaging 
 $
  \llangle P \rrangle. 
 $
 Thus 
\[ \begin{split}
\big| Z^1 -L\Phi_{\grr_l{\ep}^{-1}\Lambda}  &{\llangle P\rrangle}\big(\Phi_{-\grr_l {\ep}^{-1}\Lambda}(a^{\ep}(\grr_l))\big)  \big|
= \big| Z^1 -L (\Phi_{\grr_l{\ep}^{-1}\Lambda} )_* {\llangle P\rrangle} (a^{\ep}(\grr_l))  \big|\\
=  &\big| Z^1 -L  {\llangle P\rrangle} (a^{\ep}(\grr_l)) \big|
 \le L\cH_5(\sqrt{{\ep}}; {M_1}, |v_0|),
 \end{split}
\]
 where  we used  Lemma \ref{average-invariant}  to get the second equality. 
Since $Z^2 = -L{\llangle P\rrangle}(a^\eps(\grr_l))$, then
 $$
 |U_l^2| = | Z^1+Z^2| \le
  L\cH_5(\sqrt{{\ep}};{M_1}, |v_0|).
 $$

We then have obtained that
$${\EE}_{\cE^1\cap{\cE^2}}|\eta_l|\leqslant 2\CC {M_1}^{m_0}M_2L^{4/3}+
L\cH_5(\sqrt{{\ep}};{M_1}, |v_0|).$$
Together with \eqref{y-m-upper1} it gives us that
\[\begin{split}
{\EE}_{{\cE^1}\cap{\cE^2}}\Big(  \max_{0\leqslant\tau\le T}\big|\int_0^{\tau}\tilde y( s )ds\big| \Big)&\leqslant
 2 \CC {M_1}^{m_0}L
 +2\CC {M_1}^{m_0}M_2L^{1/3}+\cH_5(\sqrt{{\ep}}; {M_1}, |v_0|).
 \end{split}
 \]
Therefore
\[\begin{split}
{\EE}\max_{0\leqslant\tau\leqslant T}\big| \int_0^{\tau}\tilde y( s)ds\big|&
\leqslant
\cH_2({M_1}^{-1}; |v_0|)
+\cH_4(M_2^{-1};  |v_0|)\\
&+2\CC {M_1}^{m_0} {
(M_2+1)} {\ep}^{1/6}+\cH_5(\sqrt{{\ep}}; {M_1}, |v_0|).
\end{split}
\]
Now for any $\delta>0$, we perform the following procedure:
\begin{enumerate}
\item choose ${M_1}$ so large  that $\cH_2({M_1}^{-1}; |v_0|)\leqslant\delta$;
\item choose $M_2$ so large   that $\cH_4(M_2^{-1}; |v_0|)\leqslant \delta$;
\item finally, choose ${\ep}_\delta>0$ so small that 
$$
2\CC {M_1}^{m_0}(M_2+1){\ep}^{1/3}  + 
\cH_5(\sqrt{{\ep}}; {M_1}, |v_0|)\leqslant\delta \quad \text{if} \;\; \;0<{\ep}\leqslant{\ep}_\delta.
$$
\end{enumerate}

\noindent
We have seen that for any $\delta>0$,  \ 
$
{\EE}\max_{0\leqslant\tau\leqslant T}\big|\int_0^{\tau}\tilde y(a^{{\ep}}(s),s{\ep}^{-1})ds\big| \le 3 \delta
$
if ${\ep}\le {\ep}_\delta$. So
\[
{\EE}\max_{0\leqslant\tau\leqslant T} \big|\int_0^{\tau}[\Ye(a^{{\ep}}(s),s{\ep}^{-1})-
{\llangle P\rrangle}(a^{\ep}(s))]ds \big|
\to0\quad \text{as}\quad {\ep}\to0,
\]
which  completes the proof of Lemma \ref{key-a-lemma}.

  \section{Stationary solutions and mixing}\label{s_5}
  In this section we  study relation between  stationary solutions of  equation~\eqref{v-equation-slowtime} and of  the  effective equation.
   We recall that  a solution $a(\tau)$, $\tau\geqslant0$, of  equation \eqref{v-equation-slowtime} (or of effective equation \eqref{effective-equation}) is stationary if $\mathcal{D}(a(\tau))\equiv\mu$ for all $\tau\geqslant0$ and some measure 
   $\mu\in\mathcal{P}(\mathbb{C}^n)$, called a {\it  stationary measure} for the corresponding  equation.

  Through  this section  we assume that  equation \eqref{v-equation-slowtime} satisfies the following strengthening   of
   Assumption~\ref{assume-wp1}:
 \begin{assumption}\label{assume-mixing}
 \begin{enumerate}
 \item The  drift $P(v)$ is a locally Lipschitz  vector filed, belonging  to
  $\text{Lip}_{m_0}(\mathbb{C}^n,\mathbb{C}^n)$ for some  $m_0\in \Nn$.
 \item For any $v_0\in\mathbb{C}^n$  equation \eqref{v-equation-slowtime} has a unique strong solution $v^{\ep}(\tau;v_0)$,
 $\tau\ge 0$,   equal $v_0$ at $\tau=0$. There exists $m_0'>(m_0\vee1)$ such that 
  \begin{equation}\label{v-upper-mix}
  {\EE}\sup_{ {T'}\leqslant\tau\leqslant  {T'}+1}|v^{\ep}(\tau;v_0)|^{2m'_0}\leqslant C_{m'_0}(|v_0|)
  \end{equation}
for every $ {T'}\geqslant0$ and ${\ep}\in (0,  1]$,   where $C_{m_0'}(\cdot)$ is a continuous non-decreasing function. 
\item   Equation \eqref{v-equation-slowtime} is {\it mixing}.
 That is, it has a stationary  solution 
$v^\eps_{st}(\tau)$,  $\cD( v^\eps_{st}(\tau)) \equiv   \mu^\eps\in \cP(\C^n)$,  and
\be\label{mixing}
\mathcal{D}(v^{\ep}(\tau;v_0))\rightharpoonup\mu^{\ep}\text{ in }\mathcal{P}(\mathbb{C}^n)\; \;\text{ as }\tau\to+\infty,
\ee
for every $v_0$.
  \end{enumerate}
  \end{assumption}

  Under   Assumption \ref{assume-mixing} equation \eqref{v-equation-slowtime} defines in $\C^n$ a mixing 
   Markov process   with the transition probability
  $
  \Sigma_\tau(v) \in \cP(\C^n)$, $\tau \ge0$, $v\in \C^n,
  $
  where 
  $
  \Sigma_\tau(v) = \cD v^\eps(\tau; v);
  $
  e.g. see \cite[Section~5.4.C]{brownianbook}. Let us denote by $X$ the complete separable metric space
  $
  X=C([0,\infty), \C^n)
  $
  with the distance
  \be\label{X}
  \text{dist} (a_1, a_2) = \sum_{N=1}^\infty 2^{-N} \frac{ \| a_1 -a_2\|_{C([0,N], \C^n)}} {1+ \| a_1 -a_2\|_{C([0,N], \C^n)}},\qquad a_1, a_2\in X\,,
  \ee
  and 
    consider  on $X$ continuous function
  $
  {g}(a) = \sup_{0\le t\le 1} |a(t)|^{2m'_0}.
  $
  Denoting
  $
  \mu^\tau_\eps = \cD(v^\eps(\tau; 0))
  $
  we have by the Markov property that
  \be\label{g1}
   {\EE}\sup_{T'\leqslant\tau\leqslant T'+1}|v^{\ep}(\tau;0)|^{m} = \int_{\C^n} \EE {g}(v^\eps(\cdot; v_0)) \mu^{T'}_\eps(dv_0),
  \ee
  and 
   \be\label{g2}
   {\EE}\sup_{0\leqslant\tau\leqslant 1}|v^{\ep}_{st}(\tau;0)|^{m} = \int_{\C^n}\EE  {g}(v^\eps(\cdot; v_0))   \mu^\eps(dv_0).
  \ee
  The l.h.s. of \eqref{g1} is estimated in \eqref{v-upper-mix}. To estimate \eqref{g2} we will  pass in the r.h.s. of \eqref{g1} to the limit
  as ${T'}\to\infty$, using \eqref{mixing}. To do that we  start with a lemma
  
  \begin{lemma}\label{l_Lip}
  Let $n_1, n_2 \in \Nn$, $\cB\subset \R^{n_1}$ is a closed convex set which contains more than one point, and $F:\cB  \to \R^{n_2}$ is
  a Lipschitz mapping. Then $F$  may be extended to a map $\tilde F: \R^{n_1} \to \R^{n_2}$ in such a way that 
  
  a) Lip$\, \tilde F$ = Lip$\,F$,
  
  b) $\tilde F(\R^{n_1}) = F(\cB )$.
  \end{lemma}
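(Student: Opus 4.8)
The plan is to construct the extension as $\tilde F = F\circ\pi$, where $\pi\colon\R^{n_1}\to\cB$ is the nearest-point projection onto the closed convex set $\cB$. Because $\cB$ is closed and convex in the Euclidean space $\R^{n_1}$, for each $x$ there is a unique point $\pi(x)\in\cB$ minimizing $|x-z|$ over $z\in\cB$, and $\pi$ restricts to the identity on $\cB$. Thus $\tilde F$ is defined on all of $\R^{n_1}$ and, since $\pi|_\cB=\mathrm{id}$, agrees with $F$ on $\cB$, so it is indeed an extension of $F$.

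The one analytic input I would isolate is that $\pi$ is nonexpansive, i.e. $\text{Lip}\,\pi\le1$. I would deduce this from the variational characterization of the projection: $p=\pi(x)$ if and only if $p\in\cB$ and $\lan x-p,\,z-p\ran\le0$ for every $z\in\cB$. Writing $p=\pi(x)$ and $q=\pi(y)$, applying this inequality once with the pair $(x,q)$ and once with the pair $(y,p)$, and adding the two results, one arrives after rearranging at $|p-q|^2\le\lan y-x,\,q-p\ran\le|x-y|\,|p-q|$, whence $|\pi(x)-\pi(y)|\le|x-y|$. This is a classical Hilbert-space fact, so in the final text it could simply be cited.

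Granting nonexpansiveness of $\pi$, both conclusions follow formally. For (a): since $\tilde F$ extends $F$ we have $\text{Lip}\,\tilde F\ge\text{Lip}\,F$, while $\text{Lip}\,\tilde F\le\text{Lip}\,F\cdot\text{Lip}\,\pi\le\text{Lip}\,F$ by the chain rule for Lipschitz constants; hence equality. For (b): $\pi(\R^{n_1})=\cB$, because $\pi(\R^{n_1})\subseteq\cB$ trivially and $\cB\subseteq\pi(\R^{n_1})$ as $\pi$ fixes $\cB$ pointwise, so $\tilde F(\R^{n_1})=F(\pi(\R^{n_1}))=F(\cB)$.

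I do not anticipate a genuine obstacle: the only substantive step is the nonexpansiveness of the projection, settled by the two-line estimate above, and composing with $\pi$ simultaneously controls the Lipschitz constant (giving (a)) and confines the image to $F(\cB)$ (giving (b)), so that no Kirszbraun-type extension theorem is required. The hypothesis that $\cB$ contains more than one point is used only to ensure that $\text{Lip}\,F$ is meaningful and is otherwise irrelevant to the construction.
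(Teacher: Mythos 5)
Your proposal is correct and takes essentially the same route as the paper: both construct the extension as $\tilde F = F\circ \Pi$, where $\Pi$ is the nearest-point projection onto the closed convex set $\cB$, and reduce both claims a) and b) to the nonexpansiveness of $\Pi$ together with the fact that $\Pi$ fixes $\cB$ pointwise. The only difference is internal to the proof that $\mathrm{Lip}\,\Pi \le 1$: the paper (Appendix~D) uses a coordinate argument in an orthonormal basis aligned with $\Pi A - \Pi B$, while you use the standard variational characterization $\langle x-\Pi(x),\, z-\Pi(x)\rangle \le 0$ for $z\in\cB$; both are classical two-line proofs of the same Hilbert-space fact.
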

  \begin{proof}
  Let $\Pi: \R^{n_1} \to \cB $ be the projection, sending any point of $\R^{n_1}$  to a nearest point    in $\cB $. 
  Then Lip$\, \Pi$ =1, see Appendix~\ref{app_Lip}, 
    and  obviously  $\tilde F = F \circ \Pi$ is a required extension of $F$. 
  \end{proof}
  
   Since $\cC^{m_0}(P) =: C_*<\infty$, then for any $M\in\Nn$ the restriction of $P$ to $\bar B _M(\C^n)$ and its Lipschitz constant both 
   are bounded by $(1+M)^{m_0} C_*$. In view of the lemma above we may extend 
  $
  P\!\mid_{\BM(\C^n)}
  $
  to a Lipschitz mapping $P^M:  \C^n \to \C^n$ such that  
   $$
  \text{Lip} \,(P^M) \le (1+M)^{m_0} C_*, \quad \sup | P^M(v)| \le (1+M)^{m_0} C_*.
  $$
  Considering  for a solution $v(\tau)$ of equation \eqref{v-equation-slowtime}  the stopping time
  $\tau_M = \inf\{t\ge0: | v(t)| \ge M\}$ and denoting by  $v^{\eps M} $ the stopped solution
  $
  v^{\eps M} (\tau; v_0) = v^{\eps} ( \tau\wedge \tau_M; v_0),
  $
 we note that the process $ v^{\eps M} $ will not change if  in \eqref{v-equation-slowtime} we replace $P(v)$ by $P^M(v)$. So
  $ v^{\eps M} (\tau;v_0)$ is a stopped solution of a stochastic equation with Lipschitz coefficients,  and thus  a.s. the
 curve $v^{\eps M \omega}(\cdot; v_0)\in X$ continuously depends on $v_0$, for each $M\in \Nn$. As
 $$
 {g}( v^{\eps M} ) \le  {g}( v^{\eps} )\quad \text{ a.s.,}
 $$
 then in view of \eqref{mixing} and \eqref{v-upper-mix},
  for every $M$ and any $N>0$,
 $$
 \int \EE  (N\wedge {g})( v^{\eps M} (\cdot; v)) \,   \mu^\eps(dv) = \lim_{{T'}\to\infty}  \int \EE (N\wedge {g})( v^{\eps M} (\cdot; v)) \,
 \mu^{T'}_\eps(dv)  \le C_m(0)
 $$ 
 (to get the last inequality from \eqref{v-upper-mix} we used the Markov property). 
 Passing in the l.h.s.   to the limit as $N\to\infty$ using the monotone convergence theorem we see that
 \be\label{gru}
  \int\EE  {g}( v^{\eps M} (\cdot; v)) \,   \mu^\eps(dv) \le C_m(0).
 \ee
  Since for every $v$   a.s. 
 $
 {g}( v^{\eps M} (\cdot; v)) \nearrow  {g}( v^{\eps} (\cdot; v)) \le\infty$
 as 
  $ M\to\infty,
 $
  then using again the latter  theorem we derive from \eqref{gru} that
 $$
 \int\EE  {g}( v^{\eps } (\cdot; v)) \,   \mu^\eps(dv) \le C_m(0).
 $$
 Evoking \eqref{g2} we get

  \begin{lemma}\label{stat_est}
  The stationary solution  $v^{\ep}_{st}(\tau)$   satisfies  estimate  \eqref{v-upper-mix} with
  $C_{m_0'}(|v_0|)$ replaced by $C_{m_0'}(0)$.
    \end{lemma}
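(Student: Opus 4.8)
The plan is to bound the right-hand side of the identity \eqref{g2} by the constant $C_{m_0'}(0)$; combined with \eqref{g2} this is precisely the assertion of the lemma (the exponent in question being $2m_0'$, i.e. $g(a)=\sup_{0\le t\le1}|a(t)|^{2m_0'}$). The natural starting point is \eqref{g1}: by the Markov property its right-hand side equals $\EE\sup_{T'\le\tau\le T'+1}|v^\eps(\tau;0)|^{2m_0'}$, which is $\le C_{m_0'}(0)$ by \eqref{v-upper-mix} taken at $v_0=0$, uniformly in $T'$. The idea is then to let $T'\to\infty$ and exploit the mixing convergence $\mu^{T'}_\eps\rightharpoonup\mu^\eps$ from \eqref{mixing} to replace $\mu^{T'}_\eps$ by the stationary measure $\mu^\eps$ in the integral $\int_{\C^n}\EE g(v^\eps(\cdot;v_0))\,\mu^{T'}_\eps(dv_0)$.

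First I would confront the two reasons this weak limit cannot be taken directly. The integrand $v_0\mapsto\EE g(v^\eps(\cdot;v_0))$ is neither bounded (since $g$ is unbounded) nor, a priori, continuous, because continuity of $v_0\mapsto v^\eps(\cdot;v_0)$ would require globally Lipschitz coefficients whereas $P$ is only locally Lipschitz. To cure the first defect I truncate $g$ to the bounded continuous function $N\wedge g$; to cure the second I invoke the Lipschitz extension $P^M$ furnished by Lemma~\ref{l_Lip} and the stopped process $v^{\eps M}$, which solves a stochastic equation with globally Lipschitz coefficients and hence (a.s.) depends continuously on $v_0$. Then $v_0\mapsto\EE(N\wedge g)(v^{\eps M}(\cdot;v_0))$ is bounded and continuous, so the weak convergence applies and yields $\int\EE(N\wedge g)(v^{\eps M}(\cdot;v))\,\mu^\eps(dv)=\lim_{T'\to\infty}\int\EE(N\wedge g)(v^{\eps M}(\cdot;v))\,\mu^{T'}_\eps(dv)\le C_{m_0'}(0)$, the bound coming from $N\wedge g\le g$ and $g(v^{\eps M})\le g(v^\eps)$.

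It then remains to undo the two truncations by two applications of the monotone convergence theorem. Sending $N\to\infty$ removes the cap on $g$ and produces \eqref{gru}, i.e. $\int\EE g(v^{\eps M}(\cdot;v))\,\mu^\eps(dv)\le C_{m_0'}(0)$; sending $M\to\infty$ and using that $g(v^{\eps M}(\cdot;v))\nearrow g(v^\eps(\cdot;v))$ as the stopping time $\tau_M\to\infty$ removes the stopping and gives $\int\EE g(v^\eps(\cdot;v))\,\mu^\eps(dv)\le C_{m_0'}(0)$. Comparing this with \eqref{g2} completes the argument.

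The main obstacle is precisely the interchange of the mixing limit $T'\to\infty$ with the expectation of the unbounded functional $g$: weak convergence of measures controls only integrals of bounded continuous functions, so the entire force of the proof lies in the order of the limits — truncate first ($N\wedge g$ together with the stopped, Lipschitzified dynamics) to reach the mixing limit, then release the truncations monotonically. The role of Lemma~\ref{l_Lip} is exactly to recover the continuity in $v_0$ needed for the weak limit, which the merely local Lipschitz hypothesis on $P$ does not by itself provide.
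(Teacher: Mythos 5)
Your proposal is correct and follows essentially the same route as the paper's own argument: the identities \eqref{g1}--\eqref{g2} via the Markov property, the double truncation (capping $g$ at $N$ and stopping the dynamics at $\tau_M$ with the globally Lipschitz extension $P^M$ from Lemma~\ref{l_Lip} to get continuity in $v_0$), passing to the mixing limit $T'\to\infty$, and then releasing the truncations by two applications of the monotone convergence theorem, first in $N$ to obtain \eqref{gru} and then in $M$. The order of limits and the role assigned to each ingredient coincide with the paper's proof, so there is nothing to add.
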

  Let us consider the interaction representation for $v^{\ep}_{st}$,
  $v^{\ep}_{st}(\tau)=\Phi_{-\tau{\ep}^{-1}\Lambda}a^{\ep}(\tau)$ (note that $a^{\ep}$ is not a stationary process!).
   Then $a^{\ep}(\tau)$ satisfies  equation \eqref{a-equation2}, so for any $N\in\Nn$
     the collection of measures
  $\{\cD(a^{\ep}|_{[0,N]}),0<{\ep}\leqslant1\}$ is tight in view of \eqref{v-upper-mix} (for the same reason as in Section~\ref{s_2.3}). We
   choose a sequence ${\ep}_l\to0$ (depending on $N$)   such that
  \[\cD(a^{{\ep}_l}|_{[0,N]})\rightharpoonup Q_0\text{ in } \mathcal{P}\big(C([0,N],\mathbb{C}^n)\big).\]
  Applying the diagonal process and replacing $\{{\ep}_l\}$ by a subsequence, which we still denote $\{{\ep}_l\}$, we achieve that
  $
  \cD a^{{\ep}_l} \strela Q_0 \quad\text{in}\quad \cP(X)
  $
  (see \eqref{X}).

   Since $a^{\ep}(0)=v_{st}^{\ep}(0)$, then 
 $$
 \mu^{{\ep}_l}\rightharpoonup \mu^0:=Q_0|_{\tau=0}.
 $$
  Let $a^0(\tau)$ be a process in $\C^n$   such that $\cD (a^0)=Q_0.$ Then
   \be\label{hrum}
   \cD (a^{{\ep}_l} (\tau) )\strela  \cD (a^{0} (\tau)) \quad \forall\, 0\le \tau <\infty.
   \ee
   In particular, $\cD( a^0(0) )= \mu^0$.

  \begin{proposition}\label{p_5.4}
  \begin{enumerate}
  \item The limiting process $a^0$is a stationary weak solution of effective equation \eqref{effective-equation} and
  $\cD(a^0(\tau))\equiv\mu^0$, $\tau\in[0,\infty)$. In particular,
  the limiting points of the collection of stationary   measures
  $\{\mu^{\ep},{\ep}\in(0,1]\}$ as ${\ep}\to0$ are stationary measures of the
  effective equation.
  \item   Any limiting measure $\mu^0$ is invariant under  operators  $\Phi_{\theta\Lambda}$, $\theta\in\mathbb{R}$. So
    $\cD(\Phi_{\theta\Lambda}a^0(\tau))=\mu^0$ for all  $\theta\in\mathbb{R}$ and $\tau\in[0,\infty)$.
  \end{enumerate}
  \end{proposition}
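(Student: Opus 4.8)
The plan is to verify first that $a^0$ solves the effective equation, then to extract the rotation invariance of $\mu^0$ from the interaction representation, and finally to deduce stationarity from that invariance. For the first part, the point is that the entire machinery of Section~\ref{s_4} --- Lemma~\ref{key-a-lemma}, Lemmas~\ref{lemma-m-limit} and \ref{weak-m-s-lm}, and Proposition~\ref{average-K-th} --- uses only the moment bound \eqref{moment1}, which here is supplied uniformly in $\eps$ and in time by \eqref{v-upper-mix} together with Lemma~\ref{stat_est} (recall $|a^\eps(\tau)| \equiv |v^\eps_{st}(\tau)|$). Restricting to an arbitrary finite interval $[0,T]$, along $\eps_l \to 0$ we have $\cD(a^{\eps_l}|_{[0,T]}) \strela Q_0|_{[0,T]}$ with random initial data $a^{\eps_l}(0) \sim \mu^{\eps_l} \strela \mu^0$ (independent of the forward noise). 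The argument of Proposition~\ref{average-K-th}, in the version allowing random initial data (Amplification~\ref{amp_ran_ini}), then shows that $Q_0|_{[0,T]}$ solves the martingale problem for \eqref{effective-equation}. As $T$ is arbitrary, $a^0$ is a weak solution of \eqref{effective-equation} with $\cD(a^0(0)) = \mu^0$; since this equation is well posed, its solution is a Markov process.

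Next I would prove item (2). Fix $\theta > 0$. Evaluating the interaction representation $a^\eps(\tau) = \Phi_{\tau\eps^{-1}\Lambda} v^\eps_{st}(\tau)$ at $\tau = \eps_l\theta$ gives the exact identity
\[
\Phi_{-\theta\Lambda}\, a^{\eps_l}(\eps_l\theta) = v^{\eps_l}_{st}(\eps_l\theta), \qquad \cD\big(v^{\eps_l}_{st}(\eps_l\theta)\big) = \mu^{\eps_l},
\]
the last equality by stationarity of $v^{\eps_l}_{st}$. As $l\to\infty$ the evaluation times $\eps_l\theta \to 0$, so using the Skorokhod representation for $\cD(a^{\eps_l}) \strela Q_0$ in $\cP(X)$ together with path continuity one gets $\cD(a^{\eps_l}(\eps_l\theta)) \strela \cD(a^0(0)) = \mu^0$; applying the fixed continuous map $\Phi_{-\theta\Lambda}$ yields $\Phi_{-\theta\Lambda}\circ\mu^0$ as the limit of the left-hand side. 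On the right-hand side $\mu^{\eps_l} \strela \mu^0$. Uniqueness of weak limits then forces $\Phi_{-\theta\Lambda}\circ\mu^0 = \mu^0$ for every $\theta > 0$, hence for every $\theta \in \R$.

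Finally I would close item (1) by showing $\cD(a^0(\tau)) = \mu^0$ for every $\tau$. Here $\cD(a^{\eps_l}(\tau)) = \Phi_{\tau\eps_l^{-1}\Lambda}\circ\mu^{\eps_l}$, so for a bounded Lipschitz $f$,
\[
\langle f, \Phi_{\tau\eps_l^{-1}\Lambda}\circ\mu^{\eps_l}\rangle = \langle f\circ\Phi_{\tau\eps_l^{-1}\Lambda},\, \mu^{\eps_l} - \mu^0\rangle + \langle f\circ\Phi_{\tau\eps_l^{-1}\Lambda},\, \mu^0\rangle .
\]
Since each $\Phi_w$ is an isometry, $|f\circ\Phi_w|_L \le |f|_L$, so the first term is bounded by $|f|_L\,\|\mu^{\eps_l}-\mu^0\|^*_{L,\C^n}\to0$ (the dual-Lipschitz distance metrizes weak convergence, as recalled after Definition~\ref{d_dual-Lip}); the second term equals $\langle f, \Phi_{\tau\eps_l^{-1}\Lambda}\circ\mu^0\rangle = \langle f,\mu^0\rangle$ by the invariance just proved, since $\tau\eps_l^{-1}\Lambda$ is of the form $\theta\Lambda$. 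Hence $\cD(a^{\eps_l}(\tau))\strela\mu^0$, while \eqref{hrum} gives $\cD(a^{\eps_l}(\tau))\strela\cD(a^0(\tau))$, so $\cD(a^0(\tau))=\mu^0$ for all $\tau$. Thus $\mu^0$ is invariant for the effective equation and the Markov solution started from it is stationary, which proves item (1); combined with $\cD(a^0(\tau))\equiv\mu^0$ the relation $\Phi_{\theta\Lambda}\circ\mu^0=\mu^0$ gives $\cD(\Phi_{\theta\Lambda}a^0(\tau))=\mu^0$, completing item (2). The statement about limiting points of $\{\mu^\eps\}$ is then immediate.

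The main obstacle is precisely the interplay of the rotations $\Phi_{\tau\eps_l^{-1}\Lambda}$ with the weak limit: these rotations do not converge as $\eps_l\to0$, so one cannot pass to the limit naively, and this is why the process $a^\eps$ fails to be stationary while $a^0$ will be. The two ideas that resolve it are (i) the identity $\Phi_{-\theta\Lambda}a^\eps(\eps\theta) = v^\eps_{st}(\eps\theta)$, which \emph{freezes} the rotation by the choice $\tau=\eps\theta$ and delivers the invariance of $\mu^0$, and (ii) the estimate $|f\circ\Phi_w|_L \le |f|_L$, which absorbs the oscillating test functions $f\circ\Phi_{\tau\eps_l^{-1}\Lambda}$ against the dual-Lipschitz distance once invariance is in hand. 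I expect the moment bounds feeding the Section~\ref{s_4} machinery (via Lemma~\ref{stat_est}) to be routine, and these two ideas to carry the real content.
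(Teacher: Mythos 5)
Your proposal is correct, but for the heart of the proposition (stationarity and rotation invariance of $\mu^0$) it takes a genuinely different route from the paper. The paper proves item (1) first, by running the averaging machinery on \emph{functions}: it adapts Lemma~\ref{key-a-lemma} to the averaging $\langle f\rangle$ of Section~\ref{ss_av_} to get $\EE\int_0^1 f(v^{\eps_l}_{st}(\tau))d\tau-\EE\int_0^1\langle f\rangle(a^{\eps_l}(\tau))d\tau\to0$, uses stationarity of $v^{\eps_l}_{st}$ to turn this into a pointwise-in-$\tau$ statement, and then plays the test functions $\tilde f_\tau=f\circ\Phi_{\tau\eps_l^{-1}\Lambda}$ against the identity $\langle \tilde f_\tau\rangle=\langle f\rangle$ (Lemma~\ref{average-lm-f}.(4)) to conclude $\EE f(a^0(\tau))=\langle f,\mu^0\rangle$; item (2) is then read off from the limit identity $\int f\,d\mu^0=\EE\langle f\rangle(a^0(\tau))$ applied to $f\circ\Phi_{\theta\Lambda}$ versus $f$. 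You instead prove invariance \emph{first}, by the frozen-rotation identity $\Phi_{-\theta\Lambda}a^{\eps_l}(\eps_l\theta)=v^{\eps_l}_{st}(\eps_l\theta)$ at times $\eps_l\theta\to0$ combined with Skorokhod representation and path continuity, and then deduce stationarity from the exact relation $\cD(a^{\eps_l}(\tau))=\Phi_{\tau\eps_l^{-1}\Lambda}\circ\mu^{\eps_l}$, absorbing the non-convergent rotations via $|f\circ\Phi_w|_L\le|f|_L$ against $\|\mu^{\eps_l}-\mu^0\|^*_{L}\to0$. Your argument is more elementary and shorter: it bypasses the function-averaging lemmas and the re-run of the key averaging lemma entirely, using only soft weak-convergence facts and the unitarity of $\Phi_w$; what the paper's route buys is uniformity with its overall Khasminski scheme (the same key lemma serves vector fields and functions) and it produces along the way the identity $\int f\,d\mu^0=\EE\langle f\rangle(a^0(\tau))$, which encodes more than invariance. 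Both proofs share the same preliminaries (Lemma~\ref{stat_est}, the tightness and diagonal extraction giving $\cD(a^{\eps_l})\strela Q_0$ and $\mu^{\eps_l}\strela\mu^0$, and the Section~\ref{s_4} martingale-problem argument for the weak-solution part), and your handling of the $\eps$-dependent random initial data there is consistent with what the paper itself does implicitly.
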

  \begin{proof} (1)
  Using  Lemma \ref{stat_est} and repeating the argument  in the proof of Proposition~\ref{average-K-th} we
   obtain that $a^0$ is a weak solution of the effective equation. It remains to  prove its stationarity.

   Take any bounded Lipschitz function $f$ on $\mathbb{C}^n$ and consider
  \[{\EE}\int_0^{1}f(v_{st}^{{\ep}_l}(\tau))d\tau={\EE}\int_0^1 f(\Phi_{-\tau{{\ep}_l}^{-1}\Lambda}a^{{\ep}_l}(\tau))d\tau.\]
Using the same argument as in the proof of Lemma \ref{key-a-lemma} (but applied to averaging of functions rather than that of vector-fields),
we obtain that
 \be\label{grr}
 \begin{split}
 {\EE}\int_0^{1} &f\big(v_{st}^{{\ep}_l}(\tau)\big)d\tau-
 {\EE} \int_0^{1}\langle f\rangle \big(a^{{\ep}_l}(\tau)\big)d\tau \\
& =
  {\EE} 
  \int_0^{1} \big( f\big(\Phi_{-\tau {\ep}_l ^{-1}\Lambda}  a^{{\ep}_l} (\tau) )-
   \langle f\rangle (a^{{\ep}_l}(\tau)\big)  \big) d\tau \to0 \quad \text{as} \;\; {\ep}_l\to0.
\end{split}
 \ee
   By Lemma~\ref{average-lm-f},
   $$
   \langle f\rangle\big(a^{{\ep}_l}(\tau)\big)=\langle f\rangle\big(\Phi_{\tau{\ep}_l^{-1}\Lambda}v_{st}^{{\ep}_l}(\tau)\big)=\langle f\rangle \big(v_{st}^{{\ep}_l}(\tau)\big)
   $$
   for every $\tau$.
     Since the process $v_{st}^{{\ep}_l}(\tau)$ is stationary, then
     $$
     \text{
  ${\EE}f (v_{st}^{{\ep}_l}(\tau))=\text{Const}$ \ and \
   ${\EE} \langle f\rangle \big(a^{{\ep}_l}(\tau)\big)={\EE} \langle f\rangle  (v_{st}^{{\ep}_l}(\tau))=
    \text{Const}'$.
  }
  $$
     So we get from \eqref{grr} that
     \begin{equation}\label{st-1}{\EE}f\big(v_{st}^{{\ep}_l}(\tau)\big)-{\EE}\langle f\rangle \big(a^{{\ep}_l}(\tau)\big)\to0\;\text{ as }{\ep}_l\to0\quad \forall\, \tau.
     \end{equation}

  For any $\tau$ let us consider $\tilde f_\tau =f\circ\Phi_{\tau {\ep}_l^{-1}\Lambda}$.
  Then $f(a^{{\ep}_l}(\tau)) =   \tilde f_\tau (v^{{\ep}_l}_{st}(\tau))$.
   Since  $\lan f \ran = \lan \tilde f_\tau \ran$ by Lemma~\ref{average-lm-f}.(4), then  applying \eqref{st-1}
   to $\tilde f_\tau $ we get:
   $$
   \lim_{{\ep}_l\to0} \EE f(a^{{\ep}_l} (\tau)) =   \lim_{{\ep}_l\to0} \EE \tilde f_\tau (v_{st}^{{\ep}_l} (\tau))=
    \lim_{{\ep}_l\to0} \EE \lan\tilde f_\tau  \ran(a^{{\ep}_l} (\tau))=   \lim_{{\ep}_l\to0} \EE \lan  f \ran(a^{{\ep}_l} (\tau)).
   $$
   From this relation, \eqref{st-1} and \eqref{hrum} we find that
   $
\EE f(a^0(\tau)) = \int\! \!f(v) \, \mu^0(dv)
   $
   for each $\tau$ and every
    $f$ as above. This implies  the first     assertion  of the lemma.
   \medskip

   (2)   Passing to the limit in   \eqref{st-1}  using  \eqref{hrum} we have
   $$
    \int f (v) \mu^0(dv) =  \EE \lan f  \ran (a^0(\tau)) \quad \forall\, \tau.
   $$
   Using this relation with $f:= f\circ \Phi_{\theta\Lambda}$ and next  with  $f:=f$  we get  that
   $$
   \int f\circ  \Phi_{\theta\Lambda} (v) \mu^0(dv) = \EE \lan f \circ\Phi_{\theta\Lambda} \ran (a^0(\tau)) = \EE \lan f  \ran (a^0(\tau))=
   \int f (v) \mu^0(dv) ,
   $$
   for  any $\theta\in \R$ and any $\tau$,  for every bounded Lipschitz function $f$. This implies the second assertion.
     \end{proof}

If effective  equation \eqref{effective-equation} is mixing, then it has a unique  stationary measure. In this case the
 measure  $\mu^0$ in Proposition~\ref{p_5.4}   does not depend on a choice of the sequence ${\ep}_l\to0$,
  and so  $\mu^{\ep} \rightharpoonup\mu^0$  as ${\ep}\to0$. Therefore, we have

  \begin{theorem}\label{thm-limit-mixing}
 If  in addition to Assumption \ref{assume-mixing} we assume
  that the effective equation is mixing and  $\mu^0$ is its unique stationary measure, then
  $$
  \mu^{\ep}\rightharpoonup\mu^0\text{ in }\mathcal{P}(\mathbb{C}^n)\quad \text{as}\quad {\ep}\to0.
  $$
 Moreover, the  measure $\mu^0$ is invariant for all  operators $\Phi_{\theta\Lambda}$,
 and the law of the stationary solution of  equation \eqref{v-equation-slowtime}, written in the interaction presentation, converges to the law of the stationary solution of effective equation \eqref{effective-equation}.
  \end{theorem}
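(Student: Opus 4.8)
The plan is to combine the identification of subsequential limits already carried out in Proposition~\ref{p_5.4} with a soft uniqueness-plus-compactness argument, so that essentially no new estimate is needed beyond what Section~\ref{s_5} supplies. The whole point is that the substantive analytic work is finished once Proposition~\ref{p_5.4} and Lemma~\ref{stat_est} are in hand.

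First I would record precompactness of the family of stationary measures $\{\mu^\eps:\eps\in(0,1]\}$ in $\cP(\C^n)$. By Lemma~\ref{stat_est} the stationary solution obeys the moment bound $\EE|v^\eps_{st}(0)|^{2m_0'}\le C_{m_0'}(0)$ uniformly in $\eps$; since $\mu^\eps=\cD(v^\eps_{st}(0))$, Chebyshev's inequality yields $\mu^\eps(\{|v|\ge R\})\le C_{m_0'}(0)R^{-2m_0'}$ with a bound independent of $\eps$. Hence the family is tight, and by Prokhorov's theorem it is precompact in the weak topology.

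Next I would upgrade precompactness to convergence. Take any sequence $\eps_k\to0$; by precompactness pass to a sub-subsequence along which $\mu^{\eps_k}\rightharpoonup\nu$ for some $\nu\in\cP(\C^n)$. Proposition~\ref{p_5.4}(1) tells us that $\nu$ is a stationary measure of the effective equation, and the mixing hypothesis on that equation forces $\nu=\mu^0$, its unique stationary measure. Since every subsequence thus admits a sub-subsequence with the same limit $\mu^0$, and the weak topology on $\cP(\C^n)$ is metrizable (e.g. by the dual-Lipschitz distance of Definition~\ref{d_dual-Lip}), the entire family converges: $\mu^\eps\rightharpoonup\mu^0$ as $\eps\to0$. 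The asserted invariance of $\mu^0$ under all $\Phi_{\theta\Lambda}$ is then exactly Proposition~\ref{p_5.4}(2).

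For the final claim about the interaction representation I would argue on path space. Writing $a^\eps(\tau)=\Phi_{\tau\eps^{-1}\Lambda}v^\eps_{st}(\tau)$ (not itself stationary), the moment bound of Lemma~\ref{stat_est} makes the laws $\{\cD(a^\eps)\}$ tight in $\cP(X)$ by the same mechanism as in Section~\ref{s_2.3}, and by Proposition~\ref{p_5.4}(1) every subsequential limit is the law of a \emph{stationary} weak solution of the effective equation. As that equation is mixing, its stationary solution is unique in law, so all subsequential limits coincide and $\cD(a^\eps)\rightharpoonup\cD(a^0)$ as $\eps\to0$, where $a^0$ is the stationary solution of the effective equation. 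The only point to watch is that all precompactness invoked here be genuinely uniform in $\eps$; this is precisely what Lemma~\ref{stat_est} secures, and it is the place where the real work — transporting the moment bound \eqref{v-upper-mix} from the transient measures to the stationary measure by means of mixing — has already been done. Given Proposition~\ref{p_5.4}, the theorem presents no serious obstacle, being a routine compactness-and-uniqueness deduction.
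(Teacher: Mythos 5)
Your proposal is correct and follows essentially the same route as the paper: Lemma~\ref{stat_est} supplies the uniform moment bound, Proposition~\ref{p_5.4} identifies every subsequential limit (of the stationary measures and of the path-space laws of $a^\eps$) as a stationary measure, respectively stationary weak solution, of the effective equation, and mixing-plus-uniqueness then promotes subsequential convergence to full convergence exactly as in the text. The only cosmetic difference is that you verify tightness of $\{\mu^\eps\}$ directly by Chebyshev's inequality, whereas the paper reads off the marginal convergence from the path-space tightness it has already established; both rest on the same estimate.
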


  We recall that Theorem~\ref{average-thm}   and Corollary~\ref{c_4.7} only assure that on finite time-intervals $\tau\in [0,T]$ the 
  actions of solutions for   eq. \eqref{v-equation-slowtime}, as $\eps\to0$, 
   converge in law to the actions of  solutions for the effective equation with the same initial data. In difference, when
  ${\ep}\to0$  {\it all } of the stationary measure for eq. \eqref{v-equation-slowtime} converges
   to that for  the effective equation. This important fact was first observed
  in \cite{AD} for a special class of equations \eqref{v-equation-slowtime}.

  \begin{corollary} Under the assumption of Theorem \ref{thm-limit-mixing} , for any $v_0\in\mathbb{C}^n$ we have
  \[\lim_{{\ep}\to0}\lim_{\tau\to\infty}\cD(v^{\ep}(\tau;v_0))=\mu^0.
  \]
  \end{corollary}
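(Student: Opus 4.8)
The plan is to read the nested limit as a straightforward chaining of two convergence statements already proved, with all limits of probability measures understood in the topology of weak convergence (equivalently, in the dual-Lipschitz metric of Definition~\ref{d_dual-Lip}, which metrizes this topology on the Polish space $\C^n$; see the discussion following that definition). Once this reading is fixed, the proof reduces to reading off two facts in the correct order.

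First I would compute the inner limit $\lim_{\tau\to\infty}\cD(v^\eps(\tau;v_0))$ for each fixed $\eps\in(0,1]$. This is exactly the mixing hypothesis imposed on eq.~\eqref{v-equation-slowtime} in Assumption~\ref{assume-mixing}.(3): convergence \eqref{mixing} asserts that $\cD(v^\eps(\tau;v_0))\rightharpoonup\mu^\eps$ as $\tau\to+\infty$ for every $v_0$, where $\mu^\eps$ is the unique stationary measure. Hence
\[
\lim_{\tau\to\infty}\cD(v^\eps(\tau;v_0))=\mu^\eps,
\]
and, crucially, this value is independent of $v_0$ --- the mixing already forgets the initial datum.

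Next I would take the outer limit $\eps\to0$. This is precisely the content of Theorem~\ref{thm-limit-mixing}: under the additional assumptions that the effective equation~\eqref{effective-equation} is mixing with unique stationary measure $\mu^0$, one has $\mu^\eps\rightharpoonup\mu^0$ in $\cP(\C^n)$ as $\eps\to0$. Combining this with the previous display,
\[
\lim_{\eps\to0}\lim_{\tau\to\infty}\cD(v^\eps(\tau;v_0))=\lim_{\eps\to0}\mu^\eps=\mu^0,
\]
which is the asserted identity.

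Since each of the two steps is a verbatim invocation of an established result, I do not anticipate a genuine analytic obstacle here. The only point worth flagging is that the limits are genuinely iterated rather than joint: the inner limit is taken with $\eps$ frozen, so no uniformity of the mixing in \eqref{mixing} with respect to $\eps$ is required, and the two convergences may simply be composed. I would therefore record at the outset that all measure-limits are meant in the weak / dual-Lipschitz sense, so that the meaning of the nested limit is unambiguous.
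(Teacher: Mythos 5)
Your proposal is correct and is exactly the paper's argument: the inner limit is the mixing hypothesis \eqref{mixing} giving $\mu^\eps$, and the outer limit is Theorem~\ref{thm-limit-mixing} giving $\mu^\eps\rightharpoonup\mu^0$. The paper's own proof is a one-line version of precisely this chaining, so there is nothing to add.
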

  \begin{proof} Since $\lim_{\tau\to\infty}\cD(v^{\ep}(\tau))=\mu^{\ep}$, then the result follows from Theorem \ref{thm-limit-mixing}. \end{proof}

  \begin{remark}\label{rm-density} Let us decomplexify  $\mathbb{C}^n$ to  $\mathbb{R}^{2n}$ and   write the
  effective equation  in the real coordinates $\{x=(x_1,\dots,x_{2n})\} $: 
    \[
  d x_j(\tau)-{\llangle P\rrangle}_j(x)d\tau=\sum_{l=1}^{2n}\cB_{jl}dW_l(\tau),\; j=1,\dots,2n,
  \]
  where   $W_l$  are independent  standard  real Wiener processes.
  Then the stationary 
    measure $\mu^0\in\mathcal{P}(\mathbb{R}^{2n})$ satisfies the stationary Fokker-Plank equation
  \[\frac{1}{2}\sum_{l=1}^{2n}\sum_{j=1}^{2n}\frac{\partial^2}{\partial x_l\partial x_j}(\cB_{lj}\mu^0)=\sum_{l=1}^{2n}\frac{\partial}{\partial x_l}({\llangle P\rrangle}_l(x)\mu^0)
  \]
  in the sense of distributions.   If the  dispersion  matrix $\Psi$ is non-degenerate, then the diffusion   $\cB$  also is, 
   and since the drift  ${\llangle P\rrangle}(x)$ is locally
     Lipschitz,  then by the standard theory of the Fokker-Plank equation   $\mu^0=\varphi(x)dx$, where $\varphi\in C^1(\mathbb{R}^{2n})$.\footnote{E.g. firstly Theorem 1.6.8 from \cite{BKRS} implies
   that  $\mu^0=\varphi(x)dx$, where $\varphi$ is a H\"older function, and then by the usual elliptic regularity $\varphi\in C^1$.}
   \end{remark}

  \section{The non-resonant case}\label{ss_51}

 Assume that the frequency vector $\Lambda= (\lambda_1,\dots,\lambda_n)$ is non-resonant (see \eqref{nr}).
  In  Section~\ref{ss_calculating}.2)   we saw that
 in this case vector field $\llangle P\rrangle $  may be calculated via the averaging \eqref{relation} and commutes with all rotations
 $\Phi_w$, $w\in \R^n$.
   For any $j\in\{1,\dots,n\}$ let us denote
  $w^{j,t}:=(0,\dots0,t,0,\dots,0) $ (only the $j$-th entry is non-zero).  Consider ${\llangle P\rrangle}_1(z)$ and write it as
  ${\llangle P\rrangle}_1(z)=z_1R_1(z_1,\dots,z_n)$, for some complex  function $R_1$. 
    Since for $w=w^{1,t}$ we have $\Phi_w(z)=(e^{it}z_1,z_2,\dots,z_n)$, then now the first component in  relation \eqref{relation1} reads
  $
  e^{-it}z_1R_1(e^{-it}z,z_1,\dots,z_n) = e^{-it}z_1R_1(z_1,\dots,z_n), 
  $ for every $t$.
  So
   $$
   R_1(e^{-it}z_1,z_2,\dots,z_n)\equiv R_1(z_1,\dots,z_n)
   $$
  and  $R_1(z_1,\dots,z_n)$ depends not on $z_1$, but only on $|z_1|$.
  Similarly we verify  that $R_1(z_1,\dots,z_n)$ depends only on $|z_2|,\dots,|z_n|$. Therefore
  ${\llangle P\rrangle}_1(z)=z_1R_1(|z_1|,\dots,|z_n|)$. Same is true for any ${\llangle P\rrangle}_j(z)$. We then obtain the following statement:
  
  \begin{proposition}\label{p_nonres}
  If  Assumption \ref{assume-wp1} holds and   the frequency vector  $\Lambda$ is  non-resonant, then
   $\llangle P\rrangle $  satisfies  \eqref{relation}, and
  \begin{enumerate}
   \item ${\llangle P\rrangle}_j(a)=a_j R_j(|a_1|,\dots,|a_n|)$, $j=1,\dots,n$.
  \item The effective equation reads
  \be\label{ef_eq}
  da_j(\tau)-a_jR_j(|a_1|,\dots,|a_n|)d\tau=b_jd\beta^c_j(\tau),\quad \; j=1, \dots, n,
\ee
where $ b_j=(\sum_{l=1}^n|\Psi_{jl}|^2)^{1/2}$ (and $a \mapsto (a_1R_1, 
\dots, a_nR_n)$
is a locally Lipschitz vector-field).
\item If $a(\tau)$ is a solution of \eqref{ef_eq}, then the vector of its 
actions $I(\tau) =(I_1, \dots, I_n)(\tau)\in \R_+^n$ is a weak solution of equation
\be\label{ef_eqq}
  dI_j(\tau)- 2I_j (\Re 
  R_j)\big(\sqrt{2 I_1},\dots, \sqrt{2 I_n}\big)d\tau - b_j^2 d\tau= b_j \sqrt{2 I_j} \,d W_j(\tau), \quad
  I_j(0) = \tfrac12 |v_{0j}|^2,
\ee
$ j=1, \dots, n$, where $\{W_j\}$ are independent  standard real Wiener processes.
\item If in addition the assumptions of Theorem \ref{thm-limit-mixing} are met
 and the matrix $\Psi$ is non-degenerate, then the stationary measure $\mu^0$ reads
$
d \mu^0 = p(I) dI\,d\varphi,
$
 where $p$ is a continuous function on $\R^n_+$,   $C^1$-smooth outside the   boundary $\p \R^n_+$.
  \end{enumerate}
  \end{proposition}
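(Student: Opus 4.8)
The plan is to prove the four assertions in order, since each relies on the previous ones. Assertion (1) is already carried out in the paragraph preceding the proposition for the first component: relation \eqref{relation1} is equivalent to $\llangle P\rrangle(\Phi_{-w}a)=\Phi_{-w}\llangle P\rrangle(a)$, whose $j$-th scalar component reads $\llangle P\rrangle_j(\Phi_{-w}a)=e^{-iw_j}\llangle P\rrangle_j(a)$. Choosing $w=w^{k,t}$ (only the $k$-th entry equal to $t$) with $k\ne j$ shows that $\llangle P\rrangle_j$ is unchanged when the phase of $a_k$ is rotated, while $k=j$ shows $\llangle P\rrangle_j(a)/a_j$ is phase-invariant in $a_j$; together these force $\llangle P\rrangle_j(a)=a_j R_j(|a_1|,\dots,|a_n|)$, the value at $a_j=0$ being fixed by continuity. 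I would simply record this for every $j$. For assertion (2) I would note that \eqref{nr}, used with the integer vector having entries $+1,-1$ in positions $k,j$, gives $\lambda_k\ne\lambda_j$ for $k\ne j$; hence by \eqref{diag_case} the matrix $A$ is diagonal and its principal square root is $B=\text{diag}\{b_1,\dots,b_n\}$ with $b_j=(\sum_l|\Psi_{jl}|^2)^{1/2}$. Inserting this and the formula of (1) into effective equation \eqref{effective-equation} yields \eqref{ef_eq}, local Lipschitzness of the drift being inherited from Lemma \ref{average-lm}.

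Assertion (3) I would obtain by applying the complex It\^o formula (Appendix \ref{a_ito}) to $I_j=\tfrac12 a_j\bar a_j$ along a solution of \eqref{ef_eq}. Using $d\beta^c_j\,d\bar\beta^c_j=2\,d\tau$ and $d\beta^c_j\,d\beta^c_j=0$ one gets $dI_j=\big(2I_j\,\Re R_j+b_j^2\big)d\tau+\tfrac{b_j}{2}\big(\bar a_j\,d\beta^c_j+a_j\,d\bar\beta^c_j\big)$, with $R_j$ evaluated at $(\sqrt{2I_1},\dots,\sqrt{2I_n})$ because $|a_j|=\sqrt{2I_j}$. The martingale $M_j:=\tfrac{b_j}{2}\int_0^\tau(\bar a_j\,d\beta^c_j+a_j\,d\bar\beta^c_j)$ is real and satisfies $d[M_j,M_k]=\delta_{jk}\,b_j^2\,(2I_j)\,d\tau$, the off-diagonal brackets vanishing since distinct $\beta^c_l$ are independent. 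By the martingale representation theorem (Appendix \ref{r-o-m}), on a possibly enlarged space there are independent standard real Wiener processes $W_j$ with $dM_j=b_j\sqrt{2I_j}\,dW_j$; this is precisely \eqref{ef_eqq}, and the degeneracy of $\sqrt{2I_j}$ at $I_j=0$ causes no trouble because only a weak solution is asserted.

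Assertion (4) is the substantive one. First I would upgrade the $\Phi_{\theta\Lambda}$-invariance of $\mu^0$ from Proposition \ref{p_5.4}(2) to invariance under the whole torus action $\{\Phi_w:w\in\T^n\}$: as $\Lambda$ is rationally independent, the orbit $\{\theta\Lambda\bmod 2\pi\Z^n:\theta\in\R\}$ is dense in $\T^n$ by the classical Kronecker--Weyl theorem, and since $w\mapsto\Phi_w\circ\mu^0$ is continuous, $\Phi_w\circ\mu^0=\mu^0$ for all $w$. Next, by Remark \ref{rm-density}, non-degeneracy of $\Psi$ makes the effective diffusion non-degenerate, so $\mu^0=\rho(x)\,dx$ with $\rho\in C^1(\R^{2n})$. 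Passing to action-angle coordinates through $z_j=\sqrt{2I_j}\,e^{i\varphi_j}$ one has $dx=dI\,d\varphi$, and full torus invariance forces $\rho$ to be independent of $\varphi$; setting $p(I):=\rho$ gives $d\mu^0=p(I)\,dI\,d\varphi$. On the interior $(0,\infty)^n$ the chart $(I,\varphi)\mapsto x$ is a diffeomorphism, so $p$ is $C^1$ there, i.e.\ outside $\p\R^n_+$, while continuity of $\rho$ on all of $\R^{2n}$, combined with the fact that at a point where some $z_j=0$ the value of $\rho$ is independent of the undefined angle $\varphi_j$, yields continuity of $p$ up to the boundary.

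The hard part will be assertion (4), and within it two issues demand care: the passage from $\Phi_{\theta\Lambda}$-invariance to full $\T^n$-invariance, which rests on the equidistribution of $\theta\Lambda$ and on the continuity of the push-forward $w\mapsto\Phi_w\circ\mu^0$ in the weak topology, and the regularity of $p$ across the coordinate degeneracy on $\p\R^n_+$, where the action-angle chart fails and continuity must be read off directly from the $C^1$ Cartesian density $\rho$.
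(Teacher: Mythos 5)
Your proposal is correct and follows essentially the same route as the paper's proof: the phase-invariance relation \eqref{relation1} for assertion (1), diagonality of the matrix \eqref{diag_case} under non-resonance for (2), It\^o's formula applied to $I_j=\tfrac12|a_j|^2$ for (3), and density of the orbit $\{\theta\Lambda\}$ in $\T^n$ combined with Remark \ref{rm-density} for (4). The only variation is in (3), where the paper produces the driving real Wiener processes by applying L\'evy's theorem to $d\xi_j=\langle a_j/|a_j|,d\beta_j\rangle$ (setting $a_j/|a_j|=1$ at $a_j=0$), while you invoke the martingale representation theorem; both are valid.
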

  \begin{proof}
  (1) is  proved above, and (2) follows from it and \eqref{diag_case}.

 (3)  Writing the diffusion in effective equation \eqref{effective-equation} as in eq.~\eqref{ef_eq} and applying  It\^o's formula (see Appendix~\ref{a_ito})
  to $I_j = \frac12 |a_j|^2$ we get that 
  \be\label{newI}
  dI_j(\tau)- 
  \tfrac12  \big(\bar a_ j\llan P\rran_j + a_j \overline{\llan P\rran}_j \big) d\tau    - b_j^2 d\tau=  b_j \lan a_j(\tau), d\beta_j(\tau)\ran =: b_j |a_j|  d\xi_j(\tau),
  \ee
  where $d\xi_j(\tau)=    \lan a_j / | a_j|   , d\beta_j(\tau)\ran$   (see \eqref{scalar}) and 
  for $a_j=0$ we define $a_j / | a_j|$ as 1.  Using (1) we see that the l.h.s. in \eqref{newI} is the same as that in \eqref{ef_eqq}. 
  Since $\big|a_j / | a_j| \big|(\tau)\equiv1$ for each $j$, then by
   L\'evy's theorem (e.g. see \cite[p.~157]{brownianbook}),
  $
  \xi(\tau) = (\xi_1, \dots, \xi_n)(\tau)
  $
  is a standard $n$-dimensional Wiener process and  (3) follows.

(4)   By Theorem \ref{thm-limit-mixing} the stationary measure $\mu^0$
   is invariant for all  operators $\Phi_{\theta\Lambda}$,   $\theta\in\mathbb{R}$.
   Since  the curve $\theta\mapsto \theta\Lambda\in \T^n$ is dense in  $\T^n$, then
     $\mu^0$    is invariant for all  operators $\Phi_w$,   $w\in\mathbb{T}^n$.
     As the matrix $\Psi$ is non-degenerate, then by  Remark~\ref{rm-density} we have  $d\mu^0=\tilde p(z)dz$, where
      $\tilde p$ is a $C^1$-function ($dz$ stands for the volume element in $\C^n\simeq \R^{2n}$).  
       Let us write  $z_j = \sqrt{ 2I_j} e^{i\varphi_j}$. Then $d\mu^0=  p(I,\varphi)dz$.
   In the   coordinates $(I,\varphi)$ the operators $\Phi_w$ reads as $(I,\varphi)\mapsto(I,\varphi+w)$. Since $\mu^0$ is invariant for
    all  of them,  then $ p$ does not depend on $\varphi$. So
  $
 d \mu^0= p(I)dz =  p(I)\ dI\,d\varphi
  $
  and (4) holds.
     \end{proof}

     By assertion (3) of the proposition, in the non-resonant case eq. \eqref{ef_eqq} describes asymptotic as $\eps\to0$ behaviour of actions $I_j$       of 
     solutions for eq.~\eqref{v-equation-slowtime}.  But how regular is that equation? Let us denote by 
     $
     r_j = |a_j| = \sqrt{2I_j} , \ 1\le j\le n,
     $
     the radii of components of a vector $a\in \C^n$, consider the smooth  polar-coordinates mapping 
     $
     \R_+^n \times \T^n \to \C^n$, \ $  (r,\phi) \mapsto (r_1e^{i\phi_1}, \dots, r_ne^{i\phi_n}), 
     $
     and extend it to a mapping 
     $$
     \Phi:  \R^n \times \T^n \to \C^n,
     $$
     defined by the same formula.  A $j$-th component of the drift in eq. \eqref{ef_eqq}, written in the form \eqref{newI} without the It\^o term $b_j^2d\tau$, 
     is $\Re\big( a_j \overline{\llangle P\rrangle_j(a)}\big) $.
      Due to \eqref{relation}, in the polar coordinates we write this as 
     \[ \begin{split}
     \frac1{(2\pi)^n} \int_{\T^n} \Re\Big( r_j e^{i\phi_j} \overline{ e^{iw_j} P_j(r, \phi-w)}  \Big)dw=
      \frac{1}{(2\pi)^n} \int_{\T^n} \Re\big( r_j
      e^{i\theta_j}  \bar P_j(r, \theta)       \Big)d\theta =: F_j(r), \quad r\in\R^n,
     \end{split}
     \]
     where  $F_j(r)$ is a continuous function, vanishing with $r_j$.  Since the 
      integrand in the second integral  will not change if for some $l=1,\dots, n$ we replace $r_l$ by $-r_l$ and 
    replace $\theta_l$ by  $\theta_l+\pi$, then $F_j(r)$  is even in each its 
    argument $r_l$. So  it may be written as 
     $$
     F_j (r_1, \dots, r_n) = f_j (r_1^2, \dots, r_n^2), \qquad f_j \in C(\R^n_+),
     $$
     where $f_j(x_1, \dots, x_n)$ vanishes with $x_j$. Now assume that the vector field $P$ is $C^2$-smooth. In this case   the 
     integrand in the integral for $F_j$ is $C^2$-smooth in 
     $(r,\theta) \in \R^n\times \T^n$ and  $F_j$ is a 
     $C^2$-smooth function of $r$. Then by a result of H.~Whitney (see in \cite{Whit} Theorem 1 with $s=1$ and the remark, concluding the paper),  $f_j$ extends to $\R^n$ in such a way that 
     $f_j(x)$ is $C^1$-smooth in each its argument $x_l$ and $(\p/\p x_l) f_j(x)$ is continuous on $\R^n$. So $f_j$ is $C^1$-smooth. Since $r_j^2=2I_j$, then
     we have established the following result.

      \begin{proposition}\label{p_C2}
  If  the frequency vector  $\Lambda$ is  non-resonant and $P$ is $C^2$smooth, 
  then equation \eqref{ef_eqq} may be written as 
  \be\label{newII}
   dI_j(\tau)-  G_j(I_1, \dots, I_n) d\tau - b_j^2 d\tau=  b_j \sqrt{2 I_j} \,d W_j(\tau), \quad 1\le j\le n, 
     \ee
     where $G$ is a $C^1$-smooth vector-field such that   $G_j(I)$ vanishes with $I_j$,  for each $j$. 
  \end{proposition}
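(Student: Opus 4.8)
The plan is to read off the drift of the action equation \eqref{ef_eqq} from its equivalent form \eqref{newI} and to establish that, as a function of the actions, it is $C^1$ and vanishes with $I_j$. The diffusion term and the constant Itô correction $b_j^2\,d\tau$ are literally the same in \eqref{ef_eqq} and in the target equation \eqref{newII}, so they are irrelevant to the regularity question; everything reduces to analyzing the non-Itô part of the drift, which for the $j$-th component is $\Re\big(a_j\,\overline{\llangle P\rrangle_j(a)}\big)=2I_j\,(\Re R_j)(\sqrt{2I_1},\dots,\sqrt{2I_n})$ by Proposition~\ref{p_nonres}.(1), and in showing that this is a $C^1$ function of $I$ vanishing with $I_j$.

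First I would pass to polar coordinates $a_l=r_le^{i\phi_l}$ (so $r_l=\sqrt{2I_l}$) and insert the non-resonant averaging formula \eqref{relation}. After the change of angular variable $\theta=\phi-w$ the dependence on $\phi$ cancels, and the drift becomes
\[
F_j(r)=\frac{1}{(2\pi)^n}\int_{\T^n}\Re\big(r_j e^{i\theta_j}\,\overline{P_j(r,\theta)}\big)\,d\theta,
\]
a continuous function of $r=(r_1,\dots,r_n)$ alone. The explicit prefactor $r_j$ in the integrand makes transparent that $F_j(r)=0$ whenever $r_j=0$; this is the source of the required vanishing with $I_j$ at the end.

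The key structural observation is a symmetry of the integrand: for any fixed $l$ it is unchanged under $r_l\mapsto -r_l$, $\theta_l\mapsto\theta_l+\pi$ (the measure on $\T^n$ being invariant), so $F_j$ is even in each of its arguments $r_l$ separately. Consequently $F_j$ factors through the coordinatewise squaring map, $F_j(r)=f_j(r_1^2,\dots,r_n^2)$, with $f_j$ continuous on $\R^n_+$ and vanishing when $x_j=0$. Setting $x_l=2I_l$ then gives $G_j(I):=f_j(2I_1,\dots,2I_n)$, which is exactly the drift in \eqref{newII} and vanishes with $I_j$.

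The hard part is upgrading the continuity of $f_j$ to $C^1$-smoothness, since near the boundary $r_l=0$ one cannot legitimately divide by $r_l$ or take square roots. This is precisely where the hypothesis $P\in C^2$ is used. Under it the integrand is $C^2$ jointly in $(r,\theta)$, so differentiation under the integral sign yields $F_j\in C^2(\R^n)$, still even in each variable. I would then invoke Whitney's theorem on differentiable even functions (\cite{Whit}, Theorem~1 with $s=1$ together with the concluding remark): a $C^2$ function that is even in each $r_l$ factors as $f_j(r_1^2,\dots,r_n^2)$ with $f_j$ of class $C^1$ in each $x_l$ and with continuous partial derivatives on $\R^n$, hence $f_j\in C^1$. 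Since $x_l=2I_l$ is a linear change of variable, $G_j$ is $C^1$, which completes the argument. The entire nontrivial content, beyond the elementary averaging and symmetry bookkeeping, is this Whitney factorization.
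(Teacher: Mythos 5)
Your proposal is correct and follows essentially the same route as the paper: passing to polar coordinates via the non-resonant averaging formula \eqref{relation}, observing the evenness of $F_j$ in each $r_l$ under the symmetry $r_l\mapsto-r_l$, $\theta_l\mapsto\theta_l+\pi$, and then invoking Whitney's theorem (Theorem 1 with $s=1$ plus the concluding remark of \cite{Whit}) to upgrade the continuous factorization $F_j(r)=f_j(r_1^2,\dots,r_n^2)$ to a $C^1$ one. The only cosmetic difference is that you make the identification with $2I_j(\Re R_j)(\sqrt{2I_1},\dots,\sqrt{2I_n})$ and the reduction to the non-It\^o drift fully explicit at the outset, which the paper leaves implicit.
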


  We stress that although due to  the square-root singularity in the dispersion the averaged $I$-equation \eqref{newII} is a stochastic equation without uniqueness of a solution,
  still the limiting law $\cD(I(\cdot))$ for actions of solutions for eq.~\eqref{v-equation-slowtime} is uniquely defined by Corollary~\ref{c_4.7}.

\section{Uniform in time convergence.} \label{s_8}
In this section we study the uniform in time convergence  in distribution 
of   solutions for eq. \eqref{a-equation2} to those for   effective equation~\eqref{effective-equation},  with respect to the 
dual-Lipschitz metric (see Definition~\ref{d_dual-Lip}). These results are finite-dimensional versions of those, obtained in \cite{HGK22} for stochastic PDEs. 
  Throughout the section we assume 
  
   \begin{assumption}\label{a_8.1}
   The first two items of Assumption~\ref{assume-mixing} hold, and    
   \begin{enumerate}
\item[$(3')$]   Effective equation \eqref{effective-equation} is  mixing with a stationary  measure $\mu^0$. 
For any its solution
 $a(\tau)$, $\tau\ge0$, such that $\cD(a(0))=:\mu$ and  $ \lan  |z|^{2m_0'}, \mu(dz) \ran = \EE |a(0)|^{2m_0'}
  \le M'$ for some $M'>0$ (we recall  notation \eqref{recall})  we have 
\begin{equation}
\label{mixing-quantify}
\|\cD(a(\tau))-\mu^0\|_{L,\C^n}^*\leqslant g_{M'}(\tau, d)
\qquad \forall \tau\geqslant0, \;
 \text{ if } \| \mu- \mu^0\|_{L,\C^n}^*\leqslant d\leqslant2.
\end{equation}
Here the function 
$
g: \R_+^3 \to \R_+, \; (\tau, d, M) \mapsto g_M(\tau, d),
$
is continuous,  vanishes with $d$, converges to zero when  $\tau\to\infty$ 
and is such that  for each fixed
$M\ge 0$ the function $ (\tau, d)\mapsto  g_M (\tau, d)$ 
 is uniformly continuous in $d$ for $(\tau, d) \in [0, \infty)\times[0,2]$. \footnote{So $g_M$ 
extends to a continuous function on $[0, \infty]\times [0,2]$ which vanishes when $\tau=\infty$ or $d=0$.}
 \end{enumerate}
  \end{assumption}
  
  We emphasize that now  we assume   mixing  for  the effective equation,  but not  for the
   original equation~\eqref{v-equation-slowtime}.  Since
  Assumptions~\ref{a_8.1} imply   Assumptions~\ref{assume-wp1}, then the assertions of Section~\ref{s_4} with any $T>0$ hold 
   to solutions of   equations  \eqref{a-equation2} which we analyse in this section.


  \begin{proposition}\label{p_suff_cond} 
  Assume that the first two items of 
   Assumption~\ref{assume-mixing} hold, eq.~ \eqref{effective-equation} is mixing with a stationary measure $\mu^0$, 
    and  for each $M>0$ and any $v^1, v^2 \in \bar B_M(\C^n)$ we have 
  \be\label{x1}
  \| \cD a(\tau ; v^1) - \cD a(\tau ; v^2) \|_{L,\C^n}^*  \le {\mathfrak g}_M(\tau ),
  \ee
  where 
   ${\mathfrak g}$ is a continuous function of $(M,\tau)$  which 
     goes to zero when $\tau \to\infty$ and is a non-decreasing function of $M$.     Then condition   $(3')$ holds with some function $g$. 
  \end{proposition}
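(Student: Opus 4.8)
The plan is to recast \eqref{mixing-quantify} in terms of the Markov semigroup $\{S_\tau\}$ of the (well-posed, mixing) effective equation~\eqref{effective-equation}, acting on functions by $(S_\tau f)(v)=\EE f(a^0(\tau;v))$ and on measures by $S_\tau^*\mu=\cD(a(\tau))$ when $\cD(a(0))=\mu$. Since $\mu^0$ is stationary, $S_\tau^*\mu^0=\mu^0$, and by the Markov property, Fubini, and the fact that $\mu,\mu^0$ are probability measures,
\[
\langle f,S_\tau^*\mu\rangle-\langle f,\mu^0\rangle=\iint\big[(S_\tau f)(v)-(S_\tau f)(w)\big]\,\mu(dv)\,\mu^0(dw),\qquad |f|_{L,\C^n}\le1 .
\]
It then suffices to bound the right-hand side in two complementary regimes and to assemble the bounds into one admissible $g$: for large $\tau$ I would use the pairwise contraction \eqref{x1}, while for small $d:=\|\mu-\mu^0\|_{L,\C^n}^*$ I would use a short-time stability estimate, the point being that these two mechanisms control exactly the two directions in which $g$ must vanish.

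For the large-$\tau$ regime, \eqref{x1} bounds the inner bracket by $\mathfrak{g}_R(\tau)$ when $v,w\in\bar B_R(\C^n)$ and by $2$ in general; splitting the double integral over $\bar B_R(\C^n)^2$ and its complement and estimating the tails by Chebyshev's inequality gives, for every $R\ge1$,
\[
\|S_\tau^*\mu-\mu^0\|_{L,\C^n}^*\le \mathfrak{g}_R(\tau)+2R^{-2m_0'}\big(\langle|z|^{2m_0'},\mu\rangle+\langle|z|^{2m_0'},\mu^0\rangle\big)\le \mathfrak{g}_R(\tau)+2R^{-2m_0'}(M'+C_0),
\]
where $C_0:=\langle|z|^{2m_0'},\mu^0\rangle$. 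Optimizing in $R$ yields a bound $G^{M'}_1(\tau)$, independent of $d$, that tends to $0$ as $\tau\to\infty$ (take $R$ large to kill the tails, then $\tau$ large so $\mathfrak{g}_R(\tau)$ is small). This step needs the a priori finiteness of $C_0$, which I would extract from the averaging theorem: running Theorem~\ref{average-thm} on each horizon $[0,n]$ for the original equation started at a fixed $v_0$ and applying the Skorokhod--Fatou argument of Remark~\ref{r_average-thm}.2 to the continuous functional $a(\cdot)\mapsto\sup_{T'\le\tau\le T'+1}|a(\tau)|^{2m_0'}$, together with $|a^\eps(\tau)|\equiv|v^\eps(\tau)|$ and the time-uniform bound \eqref{v-upper-mix}, gives
\[
\EE\sup_{T'\le\tau\le T'+1}|a^0(\tau;v_0)|^{2m_0'}\le C_{m_0'}(|v_0|)\qquad\text{for all }T'\ge0 .
\]
Letting $\tau\to\infty$ along $\cD(a^0(\tau;v_0))\strela\mu^0$ and using lower semicontinuity of $z\mapsto|z|^{2m_0'}$ under weak convergence yields $C_0\le C_{m_0'}(0)<\infty$.

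For the small-$d$ regime the estimate \eqref{x1} is useless, since it controls only the oscillation of $S_\tau f$ over a ball and does not vanish as $d\to0$ at fixed $\tau$; here I would invoke soft compactness instead. The set $\cK_{M''}=\{\nu\in\cP(\C^n):\langle|z|^{2m_0'},\nu\rangle\le M''\}$ with $M''=M'\vee C_0$ is tight and weakly closed, hence compact for the dual-Lipschitz metric (Definition~\ref{d_dual-Lip}), and it contains both $\mu$ and $\mu^0$. Because the effective equation has locally Lipschitz drift and constant dispersion, its law depends continuously on the initial datum (as noted in the proof of Proposition~\ref{p_ran_ini}), so $S_\tau$ is Feller; combining this with the joint continuity of $(\tau,v)\mapsto\cD(a^0(\tau;v))$ and the tightness of $\cK_{M''}$ shows that $(\tau,\nu)\mapsto S_\tau^*\nu$ is jointly continuous on $[0,\mathcal T]\times\cK_{M''}$, hence uniformly continuous there, with a modulus $\omega^{M'}_{\mathcal T}(d)\to0$ as $d\to0$. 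This gives $\|S_\tau^*\mu-\mu^0\|_{L,\C^n}^*\le\omega^{M'}_{\mathcal T}(d)$ for $0\le\tau\le\mathcal T$.

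Finally I would merge the two bounds by setting $g_{M'}(\tau,d):=G^{M'}_1(\tau)\wedge\omega^{M'}_{\mathcal T(\tau)}(d)$ with $\mathcal T(\tau)\ge\tau$: given $\eta>0$ one first picks, independently of $d$, a threshold beyond which $G^{M'}_1<\eta$, and then picks $d$ small so the modulus is $<\eta$ on the complementary bounded range of $\tau$, which shows that the combined bound is $<\eta$ uniformly in $\tau$; thus it vanishes as $d\to0$ uniformly in $\tau$ and tends to $0$ as $\tau\to\infty$. Replacing it by a continuous majorant on $[0,\infty]\times[0,2]$ vanishing on the edges $\{\tau=\infty\}$ and $\{d=0\}$ is then a routine construction, and produces a $g$ with exactly the regularity required in condition~$(3')$. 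I expect the main obstacle to be precisely the small-$d$ regime: since \eqref{x1} cannot by itself deliver any decay in $d$, the argument hinges on securing the finite moment $C_0$ of $\mu^0$ (to build the compact set $\cK_{M''}$) and on upgrading the Feller property of $S_\tau^*$ to continuity that is uniform in $\tau$ over bounded ranges, after which the assembly into a single jointly regular $g$ is the remaining bookkeeping.
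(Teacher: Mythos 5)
Your proof is correct, but it takes a genuinely different route from the paper's at the crucial point. The two arguments agree on the large-$\tau$ regime: your double-integral identity over $\mu\times\mu^0$ plus Chebyshev tails is essentially the paper's step producing its $d$-independent bound $\hat g_{M'}(\tau)$, and your derivation of the moment bound $C_0=\lan|z|^{2m_0'},\mu^0\ran<\infty$ via Theorem~\ref{average-thm}, Fatou and mixing is exactly how the paper obtains \eqref{x0}. The divergence is in the small-$d$ regime, where \eqref{x1} gives nothing. The paper is quantitative there: it first proves a pathwise (same-noise) coupling estimate for two solutions of \eqref{effective-equation} with close deterministic data --- Gronwall on $w=a_1-a_2$, which solves an ODE because the effective dispersion is constant, plus Chebyshev control of the exponential factor --- and then couples the random initial data $\mu$ and $\mu^0$ via the Kantorovich--Rubinstein theorem, using the moment bounds to convert the dual-Lipschitz distance $d$ into a Wasserstein bound $D\sim d^{\gamma_2}$; an explicit optimization over the cut-off parameters then assembles $g$. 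You instead argue softly: the moment-bounded set $\cK_{M''}$ is compact for the dual-Lipschitz metric, the map $(\tau,\nu)\mapsto S_\tau^*\nu$ is (jointly) continuous on $[0,\mathcal{T}]\times\cK_{M''}$ by the Feller property, hence uniformly continuous, which yields a modulus in $d$ uniform over bounded time. This avoids optimal transport and the explicit Gronwall computation entirely, at two costs: (a) it produces only the existence of a modulus, with no rate, whereas the paper's $g$ is in principle explicit; and (b) it leans on the continuous dependence of $\cD(a^0(\cdot;v'))$ on $v'$, which the paper does assert (footnote to Proposition~\ref{p_ran_ini}) but whose standard proof is precisely the pathwise coupling the paper writes out --- so the same mechanism is present, just imported as a black box. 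Two points in your sketch deserve to be recorded explicitly: the joint continuity of $(\tau,\nu)\mapsto S_\tau^*\nu$ is not a formal consequence of the Feller property alone (one needs, e.g., uniform-on-compacts control of $\EE[\,|a(\tau_n;v)-a(\tau;v)|\wedge 2\,]$ together with tightness of $\cK_{M''}$, or one can work with path-space laws and \eqref{HG} to get a modulus uniform in $\tau\le\mathcal{T}$ directly); and the final merging $G_1^{M'}(\tau)\wedge\omega^{M'}_{\mathcal{T}(\tau)}(d)$ into a $g$ that is jointly continuous in $(\tau,d,M')$ requires choosing the moduli monotone and continuous in $\mathcal{T}$ and $M'$. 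Both are routine repairs, comparable to the massaging the paper itself performs in its final step, so your argument stands as a valid, more elementary but less quantitative alternative.
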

    The proposition is proved below in Subsection \ref{ss_proof_prop}.  

  Note that \eqref{x1} holds (with ${\mathfrak g}$ replaced by $2{\mathfrak g}$) if 
   \be\label{x11}
  \| \cD a(\tau ; v^1) - \mu^0 \|_{L,\C^n}^*  \le {\mathfrak g}_M(\tau )\quad \forall\,  v^1 \in \bar B_M(\C^n) .
  \ee
  Usually a proof of mixing for eq.  \eqref{effective-equation} in fact establishes \eqref{x11}. So  condition  $(3')$ is a rather mild restriction. 
  
\begin{example} \label{ex_8.3}
If the assumption of Proposition \ref{p_6.4} below  is fulfilled,  then  \eqref{x1} is satisfied since 
 in this case  \eqref{x11} holds with ${\mathfrak g}_M(\tau)= \bar{V}(M)e^{-c \tau}$. Here $c>0$ is a constant and
  $\bar V(M)=\max\{V(x): x\in \bar B_M(\mathbb{C}^n)\}$, where  $V(x)$ is the  Lyapunov function as in Proposition~\ref{mixing-sufficient}.  
  See  e.g. \cite[Theorem 2.5]{mattingly} and \cite[Section~3.3]{Kulik}. 
  \end{example}

   \begin{theorem}\label{thm-uniform} 
  Under Assumption \ref{a_8.1}, 
   for  any $v_0\in\mathbb{C}^n$ 
  \[
  \lim_{\eps\to0}\,
  \sup_{\tau\geqslant0}\|\cD(a^{\eps}(\tau;v_0))-\cD(a^{0}(\tau;v_0))\|_{L,\C^n}^* =0, 
  \]
  where $a^{\eps}(\tau;v_0)$ and $a^{0}(\tau;v_0)$ solve respectively \eqref{a-equation2} and \eqref{effective-equation} with the same 
   initial condition $a^{\eps}(0;v_0)=a^{0}(0;v_0)=v_0$. 
  \end{theorem}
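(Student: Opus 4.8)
The plan is to compare both $\cD(a^\eps(\tau;v_0))$ and $\cD(a^0(\tau;v_0))$ to the stationary measure $\mu^0$ of the effective equation, splitting the time axis into a finite window $[0,T]$ and the tail $\tau\ge T$. The whole argument rests on one structural observation: the effective equation \eqref{effective-equation} is invariant under every rotation $\Phi_{\theta\Lambda}$. Indeed, its drift $\llangle P\rrangle$ commutes with $\Phi_{\theta\Lambda}$ by Lemma~\ref{average-invariant}, while its diffusion matrix $A$ of \eqref{matrix-effective} satisfies $\Phi_{\theta\Lambda}A\Phi_{\theta\Lambda}^*=A$ because $A_{kj}=0$ unless $\lambda_k=\lambda_j$. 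Hence $\Phi_{\theta\Lambda}\circ\mu^0$ is again a stationary measure, so by the uniqueness postulated in $(3')$ we get $\Phi_{\theta\Lambda}\circ\mu^0=\mu^0$ for all $\theta$. Since each $\Phi_w$ is unitary, the map $f\mapsto f\circ\Phi_w$ preserves both the Lipschitz constant and the sup-norm, so $\Phi_w$ acts as an isometry of $(\cP(\C^n),\|\cdot\|_{L,\C^n}^*)$ (cf.\ \eqref{HG}); these two facts let me erase the fast rotation that appears when I restart the process.

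For the tail I fix $\tau\ge T$, set $s=\tau-T$, and write $\mu_\eps^s:=\cD(v^\eps(s;v_0))$ for the law of the original process at the intermediate time; by \eqref{v-upper-mix} its $2m_0'$-moment is bounded by a constant $M'=M'(|v_0|)$ independent of $s,\eps$ (enlarging $M'$ so that it also bounds $|v_0|^{2m_0'}$). Using that $v^\eps$ is a time-homogeneous Markov process and that $a^\eps(\tau)=\Phi_{\tau\eps^{-1}\Lambda}v^\eps(\tau)$ (see \eqref{interac}), a direct computation of the conditional law at time $s$ gives the restart identity
\[
\cD(a^\eps(\tau;v_0))=\Phi_{s\eps^{-1}\Lambda}\circ\int_{\C^n}\cD(a^\eps(T;w))\,\mu_\eps^s(dw).
\]
Applying the isometry $\Phi_{s\eps^{-1}\Lambda}$ and the invariance $\Phi_{s\eps^{-1}\Lambda}\circ\mu^0=\mu^0$, and then inserting the effective process, I bound $\|\cD(a^\eps(\tau;v_0))-\mu^0\|_{L,\C^n}^*$ by the sum of three terms: (i) the averaging error $\sup_{w\in\BR(\C^n)}\|\cD(a^\eps(T;w))-\cD(a^0(T;w))\|_{L,\C^n}^*$, which tends to $0$ as $\eps\to0$ by Theorem~\ref{average-thm} together with the uniform-in-$w$ rate of Proposition~\ref{p_ran_ini}; (ii) a truncation error $\le 2M'/R^{2m_0'}$ from the mass of $\mu_\eps^s$ outside $\BR(\C^n)$, controlled by the moment bound; and (iii) the mixing term $\|\cD(a^0(T;\mu_\eps^s))-\mu^0\|_{L,\C^n}^*$, where $\cD(a^0(T;\mu_\eps^s))=\int\cD(a^0(T;w))\mu_\eps^s(dw)$ is the effective equation run for time $T$ from the initial law $\mu_\eps^s$ (cf.\ Amplification~\ref{amp_ran_ini}). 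Since $\mu_\eps^s$ has $2m_0'$-moment $\le M'$ and $\|\mu_\eps^s-\mu^0\|_{L,\C^n}^*\le2$, the quantitative mixing \eqref{mixing-quantify} bounds (iii) by $\sup_{0\le d\le2}g_{M'}(T,d)=:\hat g_{M'}(T)$, which tends to $0$ as $T\to\infty$ by the uniform continuity of $g$ in Assumption~\ref{a_8.1}.

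It remains to assemble the estimate. For the second measure, \eqref{mixing-quantify} applied to the deterministic start $v_0$ gives $\|\cD(a^0(\tau;v_0))-\mu^0\|_{L,\C^n}^*\le g_{M'}(\tau,d_0)$ with $d_0=\|\delta_{v_0}-\mu^0\|_{L,\C^n}^*$, whose supremum over $\tau\ge T$ vanishes as $T\to\infty$. On the finite window $\tau\le T$ I use Theorem~\ref{average-thm}: weak convergence of laws in $C([0,T],\C^n)$ is the same as convergence in the dual-Lipschitz metric on path space, which by \eqref{HG} dominates the marginal distances, so $\sup_{0\le\tau\le T}\|\cD(a^\eps(\tau;v_0))-\cD(a^0(\tau;v_0))\|_{L,\C^n}^*\to0$ as $\eps\to0$ for this fixed $T$. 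Given $\delta>0$ I therefore first choose $T$ so large that both $\hat g_{M'}(T)$ and $\sup_{\tau\ge T}g_{M'}(\tau,d_0)$ are $<\delta$, then $R$ so large that $2M'/R^{2m_0'}<\delta$, and finally $\eps_0$ so small that the finite-window error and the averaging error (i) at the single time $T$ over $\BR(\C^n)$ are both $<\delta$; this yields $\sup_{\tau\ge0}\|\cdot\|_{L,\C^n}^*\lesssim\delta$ for $\eps<\eps_0$. The main obstacle is exactly the handling of terms (i)--(iii) for $\tau\ge T$: because the original equation is \emph{not} assumed mixing, the intermediate law $\mu_\eps^s$ is otherwise uncontrolled, and the argument succeeds only because its sole relevant features---a uniform moment bound and the trivial bound $\|\mu_\eps^s-\mu^0\|_{L,\C^n}^*\le2$---are fed into the mixing of the effective equation, while the fast phase $\Phi_{s\eps^{-1}\Lambda}$ is neutralized by the rotational invariance of $\mu^0$.
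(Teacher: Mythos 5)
Your proof is correct, and it follows the same basic strategy as the paper's proof of Theorem~\ref{thm-uniform} -- compare $\cD(a^{\eps}(\tau))$ with the effective flow restarted from the law of the process at an intermediate time, then combine the quantitative mixing hypothesis $(3')$ with Proposition~\ref{p_ran_ini} and a moment truncation -- but the execution differs in two genuine ways. First, the paper argues by induction over blocks $[nT^*,(n+1)T^*]$ (Lemma~\ref{random-initial-average} and steps i)--v) of its proof), propagating the bound $\|\cD(a^{\eps}(nT^*))-\mu^0\|_{L,\C^n}^*\le 2\delta_1$ from block to block and then filling in intermediate times; you avoid the induction altogether by restarting, for each $\tau\ge T$, exactly once at $s=\tau-T$, and using that \eqref{mixing-quantify} with the trivial value $d=2$ drives the effective flow issued from \emph{any} initial law with bounded $2m_0'$-moment into a $\hat g_{M'}(T)$-neighbourhood of $\mu^0$; since your three error terms (i)--(iii) are independent of $s$, the supremum over $\tau\ge T$ comes for free. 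This simplification costs nothing, because the paper's own choice of $T^*$ already invokes $g_{M^*}(T,2)$, i.e.\ the same uniformity over initial laws. Second, and more substantively, you treat a point the paper glosses over: equation \eqref{a-equation2} is not autonomous, so restarting it at time $T'$ yields the law of a solution of the \emph{phase-shifted} equation, and the paper's identity \eqref{y1} is literally valid only after correcting by the rotation $\Phi_{T'\eps^{-1}\Lambda}$, namely $\cD(a^{\eps}(T'+\tau))=\Phi_{T'\eps^{-1}\Lambda}\circ\cD\bigl(a^{\eps}(\tau;\Phi_{-T'\eps^{-1}\Lambda}\circ\nu^{\eps})\bigr)$. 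Your restart identity keeps this rotation explicitly, and you neutralize it with two facts you verify: every $\Phi_w$ is an isometry of $(\cP(\C^n),\|\cdot\|_{L,\C^n}^*)$, and $\Phi_{\theta\Lambda}\circ\mu^0=\mu^0$. Note that the latter cannot be quoted from Theorem~\ref{thm-limit-mixing} or Proposition~\ref{p_5.4} in the present setting, since eq.~\eqref{v-equation-slowtime} is not assumed mixing in Section~\ref{s_8}; your direct derivation -- equivariance of $\llangle P\rrangle$ from Lemma~\ref{average-invariant}, invariance of the diffusion matrix of \eqref{matrix-effective} because $A_{kj}=0$ unless $\lambda_k=\lambda_j$, and then weak uniqueness of the effective equation -- is exactly what is needed, and the same two facts are what one would use to repair \eqref{y1} and hence the paper's Lemma~\ref{random-initial-average}. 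In short: your argument is correct, slightly more economical than the printed proof, and more careful about the fast phases.
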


\begin{proof}
Since $v_0$ is fixed, we  abbreviate $a^\eps(\tau; v_0)$ to  $a^\eps(\tau)$. 
Due to \eqref{v-upper-mix}
\be\label{M}
 {\EE}| a^{\ep}(\tau)|^{2m'_0}\leqslant C_{m'_0}(|v_0|) =:M^*    \quad \forall\, \tau\ge0. 
\ee
By \eqref{M} and   \eqref{conv}\,\footnote{Indeed, for any $N>0$ the estimate with
$ |a|^{2m'_0}$ replaced by $ |a|^{2m'_0}\wedge N$ follows from the convergence $\cD a^\eps({\tau};v_0) \strela \cD a^0({\tau};v_0) $. Then the required estimate follows from Fatou's lemma as $N\to\infty$.}
\be\label{8.05}
 {\EE}| a^{0}(\tau;v_0)|^{2m'_0} =
\lan |a|^{2m'_0}, \cD a^0({\tau};v_0)\ran \le M^* \quad \forall\, \tau\ge0.
\ee
Since $\cD a^0({\tau};0) \strela \mu^0$ as ${\tau}\to\infty$, then  from the estimate above with $v_0=0$  we get that 
\be\label{x0}
\lan |a|^{2m'_0}, \mu^0 \ran \le C_{m'_0} (0)=: C_{m'_0} .
\ee   
For later usage we note that 
since to derive  estimates  \eqref{8.05} and 
\eqref{x0} we only used Assumptions~\ref{assume-mixing}.(1),~\ref{assume-mixing}.(2) and the fact that eq.~\eqref{effective-equation} is mixing, 
then the two estimates hold under the assumptions of Proposition~\ref{p_suff_cond}.

The  constants in estimates below depend on $M^*$, but usually this   dependence  is not indicated. 
For any $T\ge0$ we denote by $a_T^0(\tau)$ a weak solution of  effective equation \eqref{effective-equation} such that 
$$
\cD a^0_T(0) = \cD a^\eps(T).
$$  
Note that $a^0_T(\tau)$ depends on $\eps$ and  that  $a^0_0(\tau) = a^0(\tau; v_0)$.

\begin{lemma}\label{random-initial-average}
Take any $\delta>0$. Then 
\begin{enumerate}
\item for any $T>0$  there exists $\eps_1 =\eps_1(\delta,T)>0$ such that if $\eps\le \eps_1$, then
\be\label{gr}
\sup_{\tau\in[0,T]}\|\cD(a^{\eps}(T'+\tau)) - \cD(a_{T'}^{0}(\tau))\|_{L,\C^n}^*\leqslant\delta/2 \quad \forall\, T'\geqslant0.
\ee
\item
Let us choose a $T^* = T^*(\delta)>0$, satisfying   $g_{M^*}( T, 2) \le \delta/4$ for any $T\ge T^*$. 
Then there exists 
$\eps_2 =\eps_2(\delta)>0$ such that if $\eps \le \eps_2$ and  $\|\cD(a^{\eps}(T'))-\mu^0\|_{L,\C^n}^*\leqslant\delta$ for some $T'\ge0$,   then also 
\begin{equation}\label{83a}
\|\cD(a^{\eps}(T'+T^*))-\mu^0\|_{L,\C^n}^*\leqslant  {\delta}, 
\end{equation}
and 
\begin{equation}\label{83b}
\sup_{\tau\in[T',T'+T^*]}\| \cD(a^{\eps}(\tau))-\mu^0\|_{L,\C^n}^*\leqslant \tfrac{\delta}2 + \sup_{\tau\ge0}
g_{M^*}(\tau,\delta). 
\end{equation}
\end{enumerate}
\end{lemma}
  Below we abbreviate $\| \cdot\|^*_{L, \C^n} $ to $\| \cdot\|^*_{L} $, for a measure $\nu\in \cP(\C^n)$ denote by
$a^\eps(\tau;\nu)$ a weak solution of eq.~\eqref{a-equation2} such that $\cD (a^\eps(0)) =\nu$, and define $a^0(\tau;\nu)$ 
similarly. Since eq.~\eqref{a-equation2} defines a Markov process in $\C^n$  (e.g. see \cite[Section~5.4.C]{brownianbook} and 
\cite[Section~3.3]{ khasminskii}), then 
$$
\cD a^\eps(\tau;\nu) = \int_{\C^n} \cD a^\eps(\tau;v ) \, \nu(dv), 
$$
and a similar relation holds for $\cD a^0(\tau;\nu)$. 
\begin{proof}
Denote $\nu^\eps = \cD (a^\eps(T'))$. Then 
\be\label{y1}
\cD (a^\eps(T'+\tau)) = \cD (a^\eps(\tau; \nu^\eps)), \quad  \cD (a^0_{T'}(\tau) )= \cD (a^0(\tau; \nu^\eps)). 
\ee
By  \eqref{M}, for any $\delta>0$  there exists $K_\delta>0$ such that for each $\eps$, 
 $\nu^\eps(\C^n \setminus \bar B_{K_\delta} )\le \delta/8$, where
$\bar B_{K_\delta}:=\bar B_{K_\delta}(\C^n)$.  
So
$$
\nu^\eps = A^\eps \nu^\eps_\delta +  \bar A^\eps \bar\nu^\eps_\delta , \quad  A^\eps = \nu^\eps( \bar B_{K_\delta}),\;
\bar A^\eps= \nu^\eps(\C^n\setminus \bar B_{K_\delta}), 
$$
where  $ \nu^\eps_\delta $ and $ \bar\nu^\eps_\delta $ are the conditional probabilities $ \nu^\eps(\cdot \mid  \bar B_{K_\delta})$ and
$ \nu^\eps(\cdot \mid\C^n\setminus   \bar B_{K_\delta})$.
Accordingly, 
\be\label{decompp}
\cD (a^\kappa (\tau; \nu^\eps) )= A^\eps \cD (a^\kappa (\tau; \nu^\eps_\delta) )+ \bar A^\eps \cD (a^\kappa (\tau; \bar\nu^\eps_\delta)),
\ee
where $\kappa=\eps$ or $\kappa=0$. Therefore,
$$
\| \cD (a^\eps (\tau; \nu^\eps) )- \cD( a^0 (\tau; \nu^\eps)) \|_L^* \le 
A^\eps \| \cD (a^\eps (\tau; \nu^\eps_\delta) )- \cD (a^0 (\tau; \nu^\eps_\delta) )\|_L^* +
\bar A^\eps \| \cD (a^\eps (\tau; \bar \nu^\eps_\delta)) - \cD (a^0 (\tau;\bar \nu^\eps_\delta) )\|_L^*.
$$
The second term on the r.h.s  obviously is bounded by 
$2\bar A^\eps\leqslant\frac{\delta}{4}$. While by  Proposition~\ref{p_ran_ini}
and \eqref{HG} 
 there exists $\eps_1>0$, depending only on   $K_\delta$ and $T$,  such that for $0\le \tau\le T$  and  $\eps\in(0,\eps_1]$
 the first term in the r.h.s.   is $\leqslant\frac{\delta}{4}$.
 Due to \eqref{y1} this proves the first  assertion.
  \smallskip

To prove the second assertion let us  choose  $\eps_2= \eps_1(\delta/2, T^*(\delta))$. Then  we get from \eqref{gr}, 
\eqref{M}, \eqref{mixing-quantify} and the definition of $T^*$   that  for  $\eps\le\eps_2$, 
$$
\|\cD(a^{\eps}(T'+T^*))-\mu^0\|_L^*\leqslant\|\cD(a^{\eps}(T'+T^*))
-\cD(a^{0}_{T'}(T^*))\|_L^*+\|\cD(a^{0}_{T'}(T^*))-\mu^0\|_L^*\leqslant {\delta}.
$$
This proves \eqref{83a}.  Next, in view of  \eqref{gr} and \eqref{mixing-quantify}, \eqref{8.05}, 
\[\begin{split}\sup_{\theta\in[0,T^*]}\|\cD(a^{\eps}(T'+\theta))-\mu^0\|_L^*&\leqslant\sup_{\theta\in[0,T^*]}\|\cD(a^{\eps}(T'+\theta))-\cD(a_{T'}^{0}(\theta))\|_L^*\\
+\sup_{\theta\in[0,T^*]}\|\cD(a_{T'}^{0}(\theta))-\mu^0\|_L^*
&\leqslant\frac{\delta}{2}+\max_{\theta \in[0,T^*] }g_{M^*} (\theta, \delta). 
 \end{split}\]
This implies \eqref{83b}. 
\end{proof}

Now we continue to prove the theorem. 
Let us fix arbitrary $\delta>0$ and take some $0<\delta_1\le \delta/4$. Below in the proof the functions $\eps_1$, $\eps_2$ and $T^*$ are as in 
Lemma~\ref{random-initial-average}.

i) By the definition of $T^*$, \eqref{mixing-quantify}  and \eqref{M}, 
\be\label{84}
\| \cD\big( a^0_{T'} (\tau)) - \mu^0\|_L^* \le  \delta_1 \quad \forall\, \tau\ge T^*(\delta_1),
\ee
for any $T'\ge0$.  We will abbreviate $T^*(\delta_1)$ to $T^*$. 

ii)  By \eqref{gr}, if $\eps\le \eps_1=\eps_1(\delta_1, T^*)>0$, then 
\be\label{85}
\sup_{0 \le \tau \le T^*} 
\| \cD\big( a^\eps (\tau)) -   \cD\big( a^0 (\tau;v_0) \big) \|_L^* \le     \tfrac{\delta_1}2. 
\ee
In particular, in view of \eqref{84} with $T'=0$, 
\be\label{855}
\| \cD\big( a^\eps (T^*)) -  \mu^0 \|_L^* <  2\delta_1.
\ee

iii) 
By \eqref{855} and 
\eqref{83a} with $\delta:=2\delta_1$ and with  $T'= nT^*$, $n=1, 2, \dots $ we get inductively  that 
\be\label{856}
\| \cD\big( a^\eps (nT^*)) -  \mu^0 \|_L^* \le  2\delta_1 \quad \forall\, n\in\Nn,
\ee
if $\eps\le\eps_2= \eps_2(2\delta_1)$. 

iv) Now by \eqref{856} and \eqref{83b} with $\delta:= 2\delta_1$, 
 for any $n\in \Nn$ and $0\le \theta\le T^*$, 
\be\label{857}
\| \cD\big( a^\eps (nT^* +\theta)) -  \mu^0 \|_L^* \le  \delta_1 + \sup_{\theta\ge0} g_{M^*} (\theta, 2\delta_1),
\ee
if $\eps\le\eps_2(2\delta_1)$.

v) Finally, if $\eps \le \eps_\# (\delta_1) =  \min\big(\eps_1(\delta_1, T^*), 
 \eps_2 (2\delta_1)\big)$, then by \eqref{85} if $\tau\le T^*$ and by \eqref{84}+\eqref{857} if $\tau\ge T^*$
 we have that  
$$
\| \cD\big( a^\eps (\tau)) -   \cD\big( a^0 (\tau;v_0) \big) \|_L^* \le 2\delta_1 + \sup_{\theta\ge0} g_{M^*}(\theta, 2\delta_1)\qquad \forall\, \tau\ge0.
$$
By the assumption, imposed in $(3')$ on function $g_{M}$,   $g_{M} (t, d)$ is uniformly continuous in $d$ and 
vanishes at $d=0$. So there exists $\delta^* =\delta^*(\delta)$, which we may assume to be $\le \delta/4$, such that if $\delta_1 =\delta^*$, then 
$g_{M^*}(\theta, 2\delta_1) \le \delta/2$  for every $\theta\geqslant0$. Then by the estimate above,
$$
\| \cD\big( a^\eps (\tau)) -   \cD\big( a^0 (\tau;v_0) \big) \|_L^* \le
 \delta \quad\text{if} \quad \eps \le \eps_*(\delta) :=   \eps_\# (\delta^*\big(\delta)\big)>0, 
$$
for every positive $\delta$. This proves the theorem's assertion. 
\end{proof}

Since the interaction representation does not change actions, then for  the action variables of  solutions for the original  equations
 \eqref{v-equation-slowtime}  we have 
\begin{corollary}
Under the assumptions of Theorem \ref{thm-uniform} the actions of a  solution $v^\eps(\tau; v_0)$ 
for eq.~\eqref{v-equation-slowtime} which equals $v_0$ at $\tau=0$ satisfy
 \[
  \lim_{\eps\to0}\,
  \sup_{\tau\geqslant0}\|\cD(I\big( v^{\eps}(\tau;v_0)) \big) -\cD((I\big( a^{0}(\tau;v_0))\big) \|_{L,\C^n}^* =0. 
  \]
\end{corollary}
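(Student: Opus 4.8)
The plan is to deduce the corollary from Theorem~\ref{thm-uniform} by pushing measures forward under the action map
$I:\C^n\to\R^n_+$, $z\mapsto\big(\tfrac12|z_1|^2,\dots,\tfrac12|z_n|^2\big)$ (see \eqref{Iphi}). First I would use that the interaction representation \eqref{interac} preserves the modulus of every component, so $I\big(v^\eps(\tau;v_0)\big)=I\big(a^\eps(\tau;v_0)\big)$ holds pathwise and therefore $\cD\big(I(v^\eps(\tau;v_0))\big)=I\circ\cD\big(a^\eps(\tau;v_0)\big)$. Since $\R^n_+$ is a closed subset of $\R^n$, hence Polish, it thus suffices to prove
\[
\lim_{\eps\to0}\ \sup_{\tau\ge0}\big\|I\circ\cD(a^\eps(\tau;v_0))-I\circ\cD(a^0(\tau;v_0))\big\|_{L}^{*}=0,
\]
where $\|\cdot\|_L^{*}$ is the dual-Lipschitz distance on $\cP(\R^n_+)$ (Definition~\ref{d_dual-Lip}).

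The only difficulty is that $I$ is \emph{not} globally Lipschitz: its Lipschitz constant on $\bar B_R(\C^n)$ grows like $R$, so one cannot simply transport the dual-Lipschitz bound of Theorem~\ref{thm-uniform} through $I$ as in the push-forward inequality \eqref{HG}. I would resolve this by truncation, controlled by the uniform-in-time moment bounds \eqref{M} and \eqref{8.05}. Fix a test function $f$ on $\R^n_+$ with $|f|_L\le1$ and a cutoff $\chi_R:\C^n\to[0,1]$ equal to $1$ on $\bar B_R(\C^n)$, vanishing off $\bar B_{R+1}(\C^n)$, with $\mathrm{Lip}(\chi_R)\le1$. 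Then $h_R:=(f\circ I)\chi_R$ is globally Lipschitz on $\C^n$ and $|h_R|_{L,\C^n}\le R+3$: indeed $\mathrm{Lip}\big((f\circ I)|_{\bar B_{R+1}(\C^n)}\big)\le R+1$ since $\mathrm{Lip}\big(I|_{\bar B_{R+1}(\C^n)}\big)\le R+1$, while $\|f\circ I\|_{C}\le\|f\|_{C}\le1$, so the product rule gives $\mathrm{Lip}(h_R)\le R+2$. Hence $h_R/(R+3)$ is an admissible test function for $\|\cdot\|_{L,\C^n}^{*}$, and Theorem~\ref{thm-uniform} yields, with $\rho(\eps):=\sup_{\tau\ge0}\|\cD(a^\eps(\tau))-\cD(a^0(\tau))\|_{L,\C^n}^{*}$,
\[
\sup_{\tau\ge0}\big|\EE h_R(a^\eps(\tau))-\EE h_R(a^0(\tau))\big|\le (R+3)\,\rho(\eps),
\]
where $\rho(\eps)\to0$ as $\eps\to0$.

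It then remains to bound the truncation error $(f\circ I)-h_R=(f\circ I)(1-\chi_R)$, which is supported in $\{|z|>R\}$ and bounded by $\|f\|_C\le1$. By Chebyshev's inequality together with the uniform bounds \eqref{M} and \eqref{8.05},
\[
\big|\EE\big[(f\circ I-h_R)(a^\kappa(\tau))\big]\big|\le \PP\big(|a^\kappa(\tau)|>R\big)\le M^*R^{-2m_0'}\qquad(\kappa=\eps,0),
\]
uniformly in $\tau\ge0$. Combining the last two displays, taking the supremum over $\tau$ and over all $f$ with $|f|_L\le1$, gives
\[
\sup_{\tau\ge0}\big\|I\circ\cD(a^\eps(\tau))-I\circ\cD(a^0(\tau))\big\|_{L}^{*}\le (R+3)\rho(\eps)+2M^*R^{-2m_0'}.
\]
Given $\delta>0$, I would first fix $R$ so large that $2M^*R^{-2m_0'}\le\delta/2$, and then choose $\eps$ small enough that $(R+3)\rho(\eps)\le\delta/2$; this proves the corollary. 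The main (and essentially only) obstacle is the quadratic growth of $I$, which is precisely neutralized by the uniform-in-$\tau$ moment bounds, so that the tail contribution stays negligible uniformly in time.
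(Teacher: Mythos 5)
Your proof is correct and follows the same route as the paper: the paper's entire argument is the one-line observation that the interaction representation preserves actions, so that $I(v^\eps(\tau;v_0))=I(a^\eps(\tau;v_0))$ pathwise, after which the statement is read off from Theorem~\ref{thm-uniform} by pushing forward under $I$. Your truncation argument with the cutoff $\chi_R$ and the uniform-in-$\tau$ moment bounds \eqref{M}, \eqref{8.05} rigorously supplies the push-forward step that the paper leaves implicit (which does require care, since $I$ is only locally Lipschitz and inequality \eqref{HG} does not apply directly), so it is a faithful, slightly more detailed rendering of the intended proof.
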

In \cite[Theorem 2.9]{AD} the assertion of the corollary is proved for a class of systems \eqref{v-equation-slowtime}. The proof in \cite{AD}
is based on the observation that the mixing rate in the corresponding equation \eqref{v-equation-slowtime} is uniform in $0<\eps\le1$. This is
 a delicate property which is harder to establish than $(3')$ in Assumption~\ref{a_8.1}.  We also note that Theorem~\ref{thm-uniform} immediately 
 implies that if equations  \eqref{a-equation2} are mixing with stationary measures $\mu^\eps$, then 
 $\mu^\eps \strela \mu^0$. Cf. Theorem~\ref{thm-limit-mixing}.

\subsection{Proof of Proposition \ref{p_suff_cond} }\label{ss_proof_prop} 

In this subsection we write solutions $a^0(\tau; v)$ 
of effective equation \eqref{effective-equation} as $a(\tau;v)$. We will   prove the assertion of 
Proposition~\ref{p_suff_cond} in four steps. 
 
 i) 
At this step,  for any non-random $v^1, v^2 \in  \bar B_M(\C^n)$ we
   denote by $a_j(\tau) := a(\tau; v^j)$, $j=1,2$, and examine the distance $\|\cD(a_1(\tau))-\cD(a_2(\tau))\|_L^*$ as a function of $\tau$ and $|v^1-v^2|$.  Let us set 
 $w(\tau) = a_1(\tau) -a_2(\tau)$ and  assume that 
  $| v^1-v^2|\le \bar d $ for some $\bar d\ge0$. Then we have 
 $$
 \dot w  
   = {\llangle P\rrangle}(a_1) - {\llangle P\rrangle}(a_2).
 $$
 Since by Lemma \ref{average-lm} and Assumption \ref{assume-wp1}.(1) 
 $$
 |{\llangle P\rrangle}(a_1(\tau)) - {\llangle P\rrangle}(a_2(\tau))| \le C |w(\tau)|\, X(\tau), \quad X(\tau) =
 1 + |a_1(\tau)|^{ m_0}  \vee |a_2(\tau)|^{ m_0} , 
 $$
   then\ 
 $
 (d/d\tau) |w|^2 \le 2C  X(\tau)   |w|^2  $ with $|w(0)| \le \bar d.
 $
 So
 \be\label{x2}
 |w(\tau)| \le \bar d \exp\big( C \int_0^\tau X(l)\,dl\big). 
 \ee 
 Denote  $ Y(T) = \sup_{0\le \tau \le T } |X(\tau)|$.  By \eqref{v-upper-mix}  estimate \eqref{moment1} holds with
 $
 C_{m_0'}(|v_0|, T) = C_{m_0'}(M) (T+1). 
 $
 So by Remark~\ref{r_average-thm}.2) \ 
 $
 \EE Y(T) \le (C_{m'_0}(M)+1) (T+1)$ (since $m_0'>(m_0\vee1)$). 
 
   For $K>0$ denote  by 
 $ \Omega_K(T)$ the event  $ \{ Y(T) \ge K\}$. Then 
 $
 \PP (\Omega_K(T)) \le (C_{m'_0}(M)+1) (T+1) K^{-1},
 $
 and 
 $
  \int_0^\tau X(l)\,dl \le \tau K
 $
 for $\omega\notin \Omega_K(T)$. 
 From here and \eqref{x2} we see that if $f$ is such that $|f|\le1$ and Lip$\,f\le1$, then 
 \be\label{x3}
 \begin{split} 
 \EE \big(  f(a_1(\tau)) -  f(a_2(\tau) ) \big)& \le 2 \PP(\Omega_K(\tau )) + \bar d \exp \big( C\tau K) \\
 &= 2(C_{m'_0}(M)+1) (\tau +1) K^{-1} + \bar d \exp \big( C\tau  K) \quad \forall\, K>0.
 \end{split} 
 \ee 
 Let us denote by $g^1_M(\tau ,\bar d)$ the function in the r.h.s. with $K= \ln \ln\big(\bar d^{-1} \vee 3 \big)$. This is a continuous function of $(\tau , \bar d,M) \in \R_+^3$, 
 vanishing when $\bar d=0$. Due to \eqref{x1} and  \eqref{x3}, 
 \be\label{x33}
 \begin{split}
 \| \cD(a(\tau;v^1)) - \cD(a(\tau;v^2)) \|_L^*& =
 \| \cD(a_1(\tau)) - \cD(a_2(\tau)) \|_L^* \\
 &\le {\mathfrak g}_M(\tau) \wedge g_M^1(\tau,\bar d) \wedge 2 =: g_M^2(\tau,\bar d) \quad \text{if} \; \; |v^1-v^2| \le \bar d. 
 \end{split}
 \ee
 The function $g_M^2$ is continuous in the variables $(\tau,\bar d, M)$, vanishes with $\bar d$ and goes to zero when $\tau \to\infty$ since $ \mathfrak g_M(\tau)$ does. 
    \smallskip
 
 ii) At this step we consider a solution $a^0(\tau;\mu)=:a(\tau;\mu)$ of \eqref{effective-equation} with  $\cD(a(0))=\mu$ as  in Assumption~\ref{a_8.1}.$(3')$ and examine the l.h.s of  \eqref{mixing-quantify} as a function of $\tau$. For any $M>0$ consider 
  the conditional probabilities 
$
\mu_M = \PP( \cdot\mid  \bar B_M(\mathbb{C}^n))$ and $ \bar\mu_M =\PP(\cdot\mid \C^n\setminus \bar B_M(\mathbb{C}^n)).
$
  Then 
 \begin{equation}\label{split1}
 \cD(a(\tau;\mu))=A_M\cD( a(\tau;\mu_M) )+\bar A_M\cD( a(\tau;\bar\mu_M)), 
 \end{equation} 
 where $ A_M = \mu( \bar B_M(\mathbb{C}^n))$ and  $\bar A_M= \mu(  \C^n\setminus \bar B_M(\mathbb{C}^n))$
 (cf. \eqref{decompp}). As $\EE |a(0)|^{2m_0'} \le M'$, then $ \bar A_M = \PP\{ a(0) >M\}
 \leqslant M'/M^{2m_0'}$. 
 Since eq.  \eqref{effective-equation} is assumed to be mixing, then
 $
 \|\cD (a(\tau; 0)) -\mu^0\|_L^* \le \bar g(\tau),
 $
 where $\bar g\ge0$ is a continuous function, going to 0 as $\tau\to\infty$. So in view of \eqref{x1}, 
 $$
  \|\cD (a(\tau; v)) -\mu^0\|_L^* \le    {\mathfrak g}_M(\tau) +  \bar g(\tau)=:  \tilde g_M(\tau), \qquad \forall\,v\in \bar B_M(\C^n).
 $$
  From here, 
 \[
 \begin{split}
  \|\cD (a(\tau;\mu_M)) -\mu^0\|_L^*  =   \| \int\big[ \cD (a(\tau; v)) \big] \mu_M(dv)   -\mu^0\|_L^*  \le \int \| \cD (a(\tau; v)) -\mu^0\|_L^* \mu_M(dv) \le
   \tilde g_M(\tau).
 \end{split}
 \]
  Therefore due to \eqref{split1},
  \[ \begin{split}
 \|\cD(a(\tau;\mu))-&\mu^0\|_L^* \leqslant 
  A_M   \|\cD(a(\tau;\mu_M))-\mu^0\|_L^* + \bar A_M   \|\cD(a(\tau;\bar\mu_M))-\mu^0\|_L^* \\
 & \le     \|\cD(a(\tau;\mu_M))-\mu^0\|_L^* + 2\bar A_M   \le
  \tilde g_M(\tau) + 2
  \frac{M'}{M^{2m'_0}} \quad\text{for any $ M>0$ and  $\tau\ge 0 $. }
  \end{split}
  \]  
 Let $M_1(\tau)>0$ be a continuous non-decreasing function, growing to infinity with $\tau$, and 
   such that $\tilde g_{M_1(\tau)}(\tau)\to0$ as $\tau\to\infty$
 (it exists since $\tilde g_M(\tau)$ is a continuous function of $(M,\tau)$, going to 0 as $\tau\to\infty$  for each fixed $M$). 
   Then 
 \begin{equation}\label{y2}
 \|\mathcal{D}(a(\tau;\mu))-\mu^0\|_L^*\leqslant 2  \frac{M'}{M_1(\tau)^{2m_0'}}+ \tilde g_{M_1(\tau)}(\tau)=:
 {\hat g}_{M'}(\tau).
 \end{equation}
 Obviously $ {\hat g}_{M'}(\tau)\ge 0$ is a continuous function on $\R_+^2$ ,  converging to $0$ as $\tau\to\infty$.

     \smallskip
 iii)   Now we examine the l.h.s of \eqref{mixing-quantify} as a function of $\tau$ and $d$.  Recall that the Kantorovich distance between measures 
 $\nu_1, \nu_2$ on $\C^n$ is 
 $$
 \| \nu_1 -\nu_2\|_K = \sup_{\text{Lip}\, f\le1} \lan f, \nu_1\ran - \lan f,\nu_2\ran \le \infty. 
 $$
 Obviously $  \| \nu_1 -\nu_2\|_L^* \le  \| \nu_1 -\nu_2\|_K$. Due to  \eqref{x0} 
 and  the assumption on
 $\mu$,  the $2m_0'$-moments of $\mu$ and $\mu^0$ are bounded by $M'\vee C_{m_0'}$. Thus 
 \be\label{x7}
 \| \mu- \mu^0\|_K \le \tilde{C} (M'\vee C_{m_0'}) ^{\gamma_1} d^{\gamma_2}:=D, \quad \gamma_1 = \tfrac{1}{2m'_0} , \; 
 \gamma_2 = \tfrac{2m'_0-1}{2m_0'} ,
 \ee
   see  \cite[Section 11.4]{BK} and    \cite[Chapter 7]{Vil}.  So by the Kantorovich--Rubinstein theorem (see \cite{Vil, BK})
  there exist r.v.'s $\xi$ and $\xi_0$, defined 
 on a new probability space $(\Omega', \cF', \PP')$,  such that $\cD( \xi )= \mu$,  $\cD (\xi_0) = \mu^0$ and 
 \be\label{x77}
 \EE\,  |\xi_1 -\xi_0| = \| \mu- \mu^0\|_K.
 \ee
 Then using \eqref{x33} and denoting by $ a_{st}(\tau)$ a stationary solution of eq. \eqref{effective-equation},  $\cD( a_{st}(\tau))\equiv \mu^0$, 
   we have:
 \[
 \begin{split} 
 \| \cD (a(\tau) )- \mu^0\|_L^* =  \| \cD (a(\tau; a(0)) )- \cD (a_{st}(\tau) ) \|_L^* \le
 \EE^{\om'} \| \cD( a(\tau; \xi^{\om'}) )- \cD (a(\tau; \xi_0^{\om'} ))  \|_L^* \\
 \le  \EE^{\om'} g_{\bar M}^2(\tau, |\xi^{\om'} -\xi_0^{\om'}|), \qquad {\bar M}={\bar M}^{\om'}=|\xi^{\om'}| \vee  |\xi_0^{\om'}| .
  \end{split} 
 \]
As $\EE ^{\om'} {\bar M}^{2m'_0} \le 2 (M'\vee C_{m_0'})$ by \eqref{8.05} and the assumption on $\mu$, then denoting 
$Q'_K =\{ \bar M\ge K\} \subset \Omega'$, for any $K>0$
 we have 
$$
\PP^{\om'} (Q'_K) \le  2K^{-2m_0'} (M'\vee C_{m_0'}). 
$$
Since $g_M^2 \le 2$ and for $\om' \notin Q'_K$ we have $ |\xi^{\om'}| , |\xi_0^{\om'}| \le K$,  then 
 $$
 \| \cD (a(\tau) )- \mu^0\|_L^*  \le 4K^{-2m_0'} (M'\vee C_{m_0'}) + \EE ^{\om'} g_K^2(\tau,  |\xi^{\om'} -\xi_0^{\om'}|).
 $$ 
 Now let $\Omega'_r =\{  |\xi^{\om'} -\xi_0^{\om'}| \ge r\}$. Then by \eqref{x77} and 
 \eqref{x7},  $\PP^{\om'} \Omega'_r\le D r^{-1}$. So
 \be\label{x8}
 \| \cD (a(\tau)) - \mu^0\|_L^*  \le 4K^{-2m'_0} (M'\vee C_{m'_0}) +  2D r^{-1} + g_K^2(\tau,r) \qquad \forall\, \tau\ge0, \; \forall\, K, r>0.
  \ee

     \smallskip
 iv) End of the proof. 
  Let $g_0(s)$ be a positive continuous function on $\mathbb{R}_+$ such that $g_0(s)\to\infty$ as
   $s\to+\infty$ and $|C_{m_0'}\big(g_0(s)\big)(\ln\ln s)^{-1/2}|\leqslant 2C_{m_0'}(0)$ for $s\geqslant3$.
   Taking   $r=D^{1/2}$ and choosing in the r.h.s. of \eqref{x8}  $K=g_0(r^{-1})$, we denote the  r.h.s 
  as $g_{M'}^3(\tau,r)$ (so we substitute in \eqref{x8}     $D=r^2$ and $ K=g_0(r^{-1})$). 
 By \eqref{x8} and  the definition of $g_M^2$ (see \eqref{x3} and \eqref{x33}), we have 
 \[
  \begin{split}
 g_{M'}^3(\tau,r) &\leqslant 4(g_0(r^{-1}))^{-2m'_0}(M'\vee C_{m'_0})+2r \\
 &+2(C_{m'_0}(g(r^{-1}))+1)
 \big(\ln\ln(r^{-1}\vee 3)\big)^{-1}
 +r\exp\big(C\tau\ln\ln (r^{-1}\vee3)\big).
 \end{split}
 \]
 By the choice of $g_0$, as $r\to0$ the first, the second  and the fourth terms converge $0$. The third term 
  is $\leqslant 4(C_{m'_0}(0)+1)(\ln\ln(r^{-1}))^{-1/2}$ for $r\leqslant\frac{1}{3}$, so it also  converges  to zero with $r$.   
Hence  $g_{M'}^3(\tau,r)$ defines a continuous function on $\mathbb{R}_+^3$, vanishing with $r$.  Using the expression for $D$ in
\eqref{x7} let us write $r=D^{1/2}$ as $r=R_{M'}(d)$, where $R$ is a continuous function $\R_+^2 \to \R_+$,  non-decreasing in $d$ 
and vanishing with $d$. Setting 
$
g^4_{M'} (\tau, d) = g^3_{M'} (\tau, R_{M'}(d \wedge 2))
$
we get from the above that 
$$
 \| \cD( a(\tau)) - \mu^0\|_L^* \le g^4_{M'} (\tau, d ) \quad\text{if} \;\; \| \mu -\mu^0\|_L^*  \le d\le2.
$$
Finally, evoking \eqref{y2} we arrive at \eqref{mixing-quantify} with $g=g^5$, where 
$$
g^5_{M'} (\tau, d) = g^4_{M'} (\tau, d) \wedge {\hat g}_{M'}(\tau) \wedge 2. 
$$
The function $g^5$ is continuous, vanishes with $d$ and converges to zero as $\tau\to\infty$.
For any fixed ${M'}>0$ this convergence is uniform in $d$ due to the term $\hat{ g}_{M'}(\tau)$. So for a fixed ${M'}>0$ the function $(\tau,d)\mapsto g_{M'}^5(\tau,d)$ extends to a continuous function on the compact set $[0,\infty]\times[0,2]$, where it vanishes when $\tau=\infty$. Thus $g_{M'}^5$ is uniformly continuous in $d$, and the   assertion of the proposition is  proved.


\section{Averaging  for systems with general noises}\label{s-ef-eq}
In this section we sketch a proof of Theorem \ref{average-thm} for equations \eqref{1} with general  stochastic terms
$
\sqrt\eps\, \cB(v)\,dW.
$
 The proof follows  the argument in Section~\ref{s_4} with an extra difficulty which appears in the
case of equations  with  non-additive degenerate noises. 

Let us consider $v$-equation \eqref{v-equation-slowtime} with a general noise (possibly non-additive) and decomplexify it by writing the components 
$v_k(\tau)$ as  $(\tilde v_{2k-1}(\tau), \tilde v_{2k}(\tau))\in\mathbb{R}^2$, $k=1, \dots, n$. 
Now a solution $v(\tau)$ is a vector in $\R^{2n}$ and the equation reads
\be\label{E0}
dv(\tau) +\eps^{-1} Av(\tau) \,d\tau =  P(v(\tau))\,d\tau +  \cB(v(\tau)) \,d\beta(\tau), \qquad v(0) =v_0\in\mathbb{R}^{2n}. 
\ee
Here $A$ is the block-diagonal matrix as in Section \ref{s_1.1}, $\cB(v)$ is a real $2n\times n_2$-matrix and 
 $\beta(\tau) = (\beta_1(\tau) , \dots, \beta_{n_2}(\tau))$, where $\{\beta_j(\tau)\}$ are independent  standard real Wiener processes. 
 Note that in the real coordinates  in $  \R^{2n} \simeq\C^n$ the  operator $\Phi_w$ in \eqref{Phi} with some vector 
  $w\in\R^n$ is given by a 
 block-diagonal matrix, where for $j=1,\dots, n$ its  $j$-th diagonal block is the $2\times2$-matrix of rotation by the angle $w_j$. 
 \smallskip

     In this section we assume that
  
   \begin{assumption}\label{a_7}
   The dirft 
   $P\in$ Lip$_{m_0} (\R^{2n}, \R^{2n})$,  the matrix-function $\cB(v)$  belongs to
Lip$_{m_0} \big(\R^{2n}, M(2n\times n_2)\big )$,  equation \eqref{E0}
is well-posed and its solutions satisfy \eqref{moment1}.
  \end{assumption}

Passing to the interaction representation
$
v(\tau) = \Phi_{\tau \eps^{-1} \Lambda} a(\tau) 
$
  we rewrite the equation as 
\be\label{E}
da(\tau) = \Phi_{\tau \eps^{-1} \Lambda} P(v(\tau))\,d\tau +  \Phi_{\tau \eps^{-1} \Lambda} \cB(v(\tau)) \,d\beta(\tau),
\qquad a(0)=v_0.
\ee
As in Section~\ref{s_4} we will see that as $\eps\to0$, the asymptotic  behaviour of distributions of the equation's solutions is described by an 
effective equation. As before,  the effective drift  is $\llangle P\rrangle(a)$.  
To calculate the effective dispersion, as in the proof of Lemma \ref{weak-m-s-lm} we consider the martingale 
\[
N^{Y,\varepsilon}:=a^{\varepsilon}(\tau)-\int_0^\tau Y(a^\varepsilon(s),s\varepsilon^{-1})ds=v_0+\int_0^\tau\cB^{\Lambda}(a^{\varepsilon}(s);s\varepsilon^{-1})d\beta(s),
\]
where $Y$ is defined in \eqref{Y} and
$\cB^{\Lambda}(a;t)=\Phi_{t\Lambda}\cB(\Phi_{-t\Lambda}a)$. 
  By  It\^o's formula, for $i,j=1,\dots, n$ the process 
 \[
 N_i^{Y,\varepsilon}(\tau) N_j^{Y,\varepsilon}(\tau)-\int_0^\tau \cA_{ij}^\Lambda(a^{\varepsilon}(s);s\varepsilon^{-1})ds,
 \qquad 
 (\cA_{ij}^\Lambda(a; t ))=\cB^{\Lambda}(a; t )\cB^{\Lambda*}(a; t ),
 \]
 where $\cB^{\Lambda*}$ is the transpose of $\cB^\Lambda$, also is a martingale. 
 By a straightforward analogy of Lemma~\ref{average-lm}, 
   the limit 
   $$\cA^0(a):=\lim_{T\to\infty}\frac{1}{T}\int_0^T\cA^{\Lambda}(a;t)dt$$
    exists 
   and belongs to $\text{Lip}_{2m_0}(\mathbb{R}^{2n},M(2n\times2n))$. 
   Then, by analogy with Lemma \ref{key-a-lemma}, 
   $$
   \mathbf{E}\Big|\int_0^{\tau}\cA^{\Lambda}(a^{\varepsilon}(s);s\varepsilon^{-1})ds-\int_0^\tau\cA^0(a^{\varepsilon}(s))ds\Big|\to0,\; \text{ as }\varepsilon\to0,
   $$
 for any $\tau\geqslant0$. From here, as in Section \ref{s_4}, we conclude that now for 
  the effective diffusion we should take 
 $\cA^0(a)$, which is a non-negative symmetric matrix. Denoting by $\cB^0(a)=\cA^0(a)^{1/2}$ 
 its principal square root, as in Section \ref{s_4} we verify  that any limiting measure $Q_0$ as in \eqref{precomp} 
 is a solution for the martingale problem for the effective equation 
 \begin{equation}\label{E2}
 da(\tau)-{\llangle P\rrangle}(a(\tau))d\tau=\cB^0(a(\tau))d\beta(\tau),\qquad a(0)=v_0,
 \end{equation}
 and so is a  weak solution  of the equation. 
 If the noise in equation \eqref{E0} is additive, then $\cB^0$ is a constant matrix, eq. \eqref{E2} has a unique solution and Theorem \ref{average-thm} with (modified) effective equation \eqref{E2} remains true for solutions of eq. \eqref{E}. In particular, the theorem applies to equation \eqref{v-equation1} with general additive random forces \eqref{general} (but then the effective dispersion matrix is given by a more complicated formula than in Section~\ref{s_4}). 
 
 Similarly, if the diffusion in \eqref{E0} is  non-degenerate in the sense that 
 \begin{equation}\label{E3}
 | \cB(v) \cB^*(v)  \xi|\geqslant \alpha|\xi|,\qquad  \forall\, v,\;  \forall\,
 \xi\in\mathbb{R}^{2n},
 \end{equation}
 for some $\alpha>0$, then the matrix $\cB^{\Lambda}(a, \tau)$ also satisfies \eqref{E3} for all $a$ and $\tau$, i.e.
  $\langle \cA^{\Lambda}(a;s\varepsilon^{-1})\xi,\xi\rangle\geqslant \alpha |\xi|^2$. Thus 
 $\cA^0(a)\geqslant   \alpha \mathbb{I}$,
 and so $\cB^0(a)=\cA^0(a)^{1/2}$ is a locally Lipschitz matrix function of $a$ 
 (e.g. see \cite[Theorem 5.2.2]{SV}). So again eq. \eqref{E2} has a unique solution and Theorem~\ref{average-thm} remains true for eq. \eqref{E0} (with the effective equation of the form \eqref{E2}). 
 
 To treat equations \eqref{E0} with degenerate non-additive noises we write the matrix $\cA^0(a)$ as
 \[\cA^0(a)=\lim_{T\to\infty}\frac{1}{T}\int_0^T\big(\Phi_{t\Lambda}\cB(\Phi_{-t\Lambda}a)\big)\cdot \big(\Phi_{t\Lambda}\cB(\Phi_{-t\Lambda}a)\big)^*dt.\]
 By the same reason as in Proposition  \ref{C-m-smooth-average}  we find that 
 \[|\cA^0|_{C^2(B_R)}\leqslant C|\cB|_{C^2(B_R)}^2,\quad \forall R>0.\]
 Now using \cite[Theorem 5.2.3]{SV}, we get that 
 \begin{equation}\label{E4}
 \text{Lip}(\cB^0(a)|_{\bar B_R})\leqslant C|\cA^0|_{C^2(B_{R+1})}^{1/2}\leqslant C_1|\cB|_{C^2(B_{R+1})},\quad \forall R>0.
 \end{equation}
 So the matrix-function $\cB^0(a)$ is locally Lipschitz continuous,  eq. \eqref{E2} has a unique solution and 
  the assertion of Theorem \ref{average-thm} remains true for  eq.~\eqref{E0}. We have obtained
 \begin{theorem}\label{t_7}
 Suppose that  Assumption \ref{a_7} holds and for the matrix function $\cB(v)$ in \eqref{E0} one of the following three options  is true:
 \begin{enumerate}
 \item it is $v$-independent;
 \item it satisfies the non-degeneracy  condition  \eqref{E3};
 \item it  is a $C^2$-smooth matrix-function of $v$. 
 \end{enumerate}
 Then
 for any $v_0\in \mathbb{R}^{2n}$ a solution $a^\varepsilon(\tau;v_0)$ of eq. \eqref{E} satisfies 
 $$\cD(a^{{\ep}}(\cdot;v_0))\rightharpoonup  Q_0\text{ in }\mathcal{P}(C([0,T],\mathbb{C}^n))\quad  \text{ as }{\ep}\to0,
 $$ 
 where $Q_0$ is the  law of a unique weak solution of the effective equation \eqref{E2}.  
 \end{theorem}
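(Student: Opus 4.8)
The plan is to reproduce the three-stage scheme of Section~\ref{s_4} — tightness, identification of the subsequential limit through the martingale problem, and uniqueness — inserting at each stage the modifications forced by the $v$-dependence of the dispersion. First I would show that the family $\{\cD(a^\eps(\cdot;v_0)),\,\eps\in(0,1]\}$ is pre-compact in $\cP(C([0,T],\C^n))$, exactly as in Lemma~\ref{pre-compact1}. The drift term is handled by Assumption~\ref{a_7} and Chebyshev's inequality as before; for the martingale part $\int_0^\tau \Phi_{s\eps^{-1}\Lambda}\cB(v(s))\,d\beta(s)$ I would use that each $\Phi_w$ is unitary together with the Burkholder--Davis--Gundy inequality and the moment bound \eqref{moment1} to obtain a uniform-in-$\eps$ H\"older estimate, whence tightness follows by Ascoli--Arzel\`a and Prokhorov. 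Passing to a subsequence $\eps'_l\to0$ produces a limiting measure $Q_0$ as in \eqref{precomp}.

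Next I would verify that every such $Q_0$ solves the martingale problem for \eqref{E2}. The drift part is verbatim Lemmas~\ref{lemma-m-limit} and~\ref{weak-m-s-lm}: Lemma~\ref{key-a-lemma} still applies to $\tilde y = Y - \llangle P\rrangle$, so $N^{\llangle P\rrangle}(\tau;a)=a(\tau)-\int_0^\tau\llangle P\rrangle(a(s))\,ds$ is a $Q_0$-martingale. For the quadratic variation I would use that $N^{Y,\eps}(\tau)=v_0+\int_0^\tau\cB^\Lambda(a^\eps(s);s\eps^{-1})\,d\beta(s)$ is a martingale with covariation $\int_0^\tau\cA^\Lambda_{ij}(a^\eps(s);s\eps^{-1})\,ds$, and invoke the stated averaging estimate
\[
\EE\Big|\int_0^\tau\cA^\Lambda(a^\eps(s);s\eps^{-1})\,ds-\int_0^\tau\cA^0(a^\eps(s))\,ds\Big|\to0,
\]
which holds because $\cA^\Lambda\in\text{Lip}_{2m_0}$ while \eqref{moment1} furnishes moments of order $2m_0'>2m_0$, guaranteeing the uniform integrability needed in the passage $\eps\to0$. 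Replacing $N^{Y,\eps}$ by $N^{\llangle P\rrangle}$ (their difference being $o_\eps(1)$ in $L^1$, by the argument of \eqref{x00}) then shows that $N^{\llangle P\rrangle}_i N^{\llangle P\rrangle}_j-\int_0^\tau\cA^0_{ij}(a(s))\,ds$ is a $Q_0$-martingale, i.e. the covariation of $N^{\llangle P\rrangle}$ equals $\int_0^\tau\cB^0(\cB^0)^*(a(s))\,ds$ with $\cB^0=(\cA^0)^{1/2}$. By the martingale representation theorem, as in Proposition~\ref{average-K-th}, $Q_0$ is a weak solution of \eqref{E2}.

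Finally I would prove uniqueness of the weak solution of \eqref{E2}, which pins down $Q_0$ and promotes subsequential convergence to full convergence via Yamada--Watanabe; this is the step where the three options enter and is the main obstacle. Since $\llangle P\rrangle$ is locally Lipschitz by Lemma~\ref{average-lm}, everything hinges on the local Lipschitz continuity of the matrix square root $\cB^0=(\cA^0)^{1/2}$, which is generally false for a degenerate square root unless extra regularity is present. In case (1), $\cA^0$ is a constant matrix and $\cB^0$ is constant; in case (2), the uniform lower bound \eqref{E3} propagates to $\cA^\Lambda(a;t)\ge\alpha\mathbb{I}$ and hence to $\cA^0(a)\ge\alpha\mathbb{I}$, so the square root of the uniformly positive, locally Lipschitz $\cA^0$ is locally Lipschitz by \cite[Theorem 5.2.2]{SV}; in the delicate case (3), the smoothing argument of Proposition~\ref{C-m-smooth-average} yields $|\cA^0|_{C^2(B_R)}\le C|\cB|_{C^2(B_R)}^2$, and the square root of a nonnegative $C^2$ matrix function is locally Lipschitz with the estimate \eqref{E4} by \cite[Theorem 5.2.3]{SV}. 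In each of the three cases $\cB^0$ is locally Lipschitz, so eq.~\eqref{E2} has a unique strong solution and therefore a unique weak solution. Consequently $Q_0$ is independent of the chosen subsequence, so $\cD(a^\eps(\cdot;v_0))\rightharpoonup Q_0$ as $\eps\to0$, which is the assertion of the theorem.
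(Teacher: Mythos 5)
Your proposal is correct and follows essentially the same route as the paper's own (sketched) proof: tightness, identification of every subsequential limit as a solution of the martingale problem for \eqref{E2} with averaged diffusion $\cA^0$ and drift $\llangle P\rrangle$, and then case-by-case local Lipschitz continuity of $\cB^0=(\cA^0)^{1/2}$ using exactly the same two results from \cite{SV} (Theorems 5.2.2 and 5.2.3) plus the $C^2$-bound of Proposition~\ref{C-m-smooth-average}, concluding with Yamada--Watanabe. The only difference is that you make explicit some details the paper's sketch leaves implicit (the Burkholder--Davis--Gundy estimate for tightness with state-dependent noise, and the uniform integrability underlying the limit passage), which is a welcome, not a divergent, refinement.
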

 An obvious analogy of Corollary \ref{c_4.7} holds for solutions of eq. \eqref{E0}. 


     \section{A sufficient condition for Assumptions \ref{assume-wp1}, \ref{assume-mixing} and \ref{a_8.1}}\label{s_9}
   In this section we  derive a condition which implies Assumptions \ref{assume-wp1}, \ref{assume-mixing} and \ref{a_8.1}. So when it is met,  all  theorems in Sections~\ref{s_4},~\ref{s_5} and~\ref{s_8} apply to eq.~\eqref{v-equation-slowtime}.

    Consider a stochastic differential equation on $\mathbb{R}^l$,
    \begin{equation}\label{x-equation-r}
    dx =b( x)d\tau+\sigma( x)d \beta (\tau),\;\;\; x\in\mathbb{R}^l,\; \tau\geqslant0,
    \end{equation}
    where  $\sigma( x)$ is an $l\times k$-matrix   and $\beta (\tau) $ is a     standard 
     Wiener processes in $\R^k$.      We assume

    \begin{assumption}\label{assume-lip-x} The drift $b( x)$ and dispersion    $\sigma( x)$ are locally Lipschitz in $x$, and 
     $
     \cC^m(b)$,   $\cC^m(\sigma) \le C<\infty
     $
     for some $m\ge0$.
    \end{assumption}

  The diffusion
   $a( x)= \sigma( x) \cdot \sigma^t( x)$ is a  nonnegative symmetric   $l\times l$-matrix. Consider the 
    differential  operator
   $$
   \mathscr{L} \big(v(x)\big)=\sum_{j=1}^lb_j( x)\frac{\partial v}{\partial x_j}+\frac{1}{2}\sum_{i=1}^l
   \sum_{j=1}^la_{ij}( x)\frac{\partial^2 v}{\partial x_i\partial x_j}.
   $$
    We have the following result concerning   the well-posedness of eq.~\eqref{x-equation-r} from   \cite[Theorem~3.5]{ khasminskii}:

    \begin{theorem}\label{lya-thm}
    Assume Assumption \ref{assume-lip-x} and suppose that  there exists a nonnegative function $V(x)\in C^2(\mathbb{R}^l)$ such that for some constant $c>0$ we have
    \begin{equation*}
    \mathscr{L}  (V(x))\leqslant cV(x)\quad \forall \tau\geqslant0,\; x\in\mathbb{R}^l,
    \end{equation*}
    and
    \begin{equation*}
     \inf_{|x|>R}V(x)\to\infty\; \text{ as }R\to\infty.
    \end{equation*}
    Then for any $x_0 \in\mathbb{R}^l$       equation \eqref{x-equation-r} has  a unique strong solution $X(\tau)$ with the
    initial condition $X(0)=x_0$.  Furthermore, the process $X(\tau)$ satisfies
    $${\EE}V(X(\tau))\leqslant e^{c\tau} V(x_0) \quad  \forall \tau\geqslant0.$$
    \end{theorem}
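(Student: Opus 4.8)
The plan is to combine the standard local well-posedness of It\^o SDEs with locally Lipschitz coefficients with a single supermartingale estimate driven by $V$, which simultaneously rules out explosion and produces the asserted moment bound. The function $e^{-c\tau}V(x)$ is the natural object to track, since the hypothesis $\mathscr{L}V\le cV$ is exactly what makes $e^{-c\tau}V(X(\tau))$ a supermartingale along the diffusion.

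First I would establish local existence and uniqueness. By Assumption~\ref{assume-lip-x} the coefficients $b$ and $\sigma$ are locally Lipschitz, so for each $R>0$ one may modify them outside $\bar B_R(\R^l)$ to obtain globally Lipschitz coefficients $b_R,\sigma_R$; the truncated equation then has a unique global strong solution $X_R$ by the classical theory. Setting the exit times $\tau_R=\inf\{\tau\ge0:|X_R(\tau)|\ge R\}$, these solutions are consistent ($X_R\equiv X_{R'}$ on $[0,\tau_R]$ for $R'\ge R$), which patches them into a unique strong solution $X(\tau)$ on the stochastic interval $[0,\tau_\infty)$, where $\tau_\infty=\lim_{R\to\infty}\tau_R$ is the explosion time. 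The remaining task is to show $\tau_\infty=+\infty$ a.s.\ and to obtain the moment estimate.

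The key step is to apply It\^o's formula to $e^{-c(\tau\wedge\tau_R)}V(X(\tau\wedge\tau_R))$. Its finite-variation part equals $\int_0^{\tau\wedge\tau_R}e^{-cs}(\mathscr{L}V-cV)(X(s))\,ds\le0$ by the Lyapunov hypothesis, while the stochastic integral $\int_0^{\tau\wedge\tau_R}e^{-cs}\nabla V(X(s))\cdot\sigma(X(s))\,d\beta(s)$ is a true martingale because on $[0,\tau_R]$ its integrand is bounded (here $V\in C^2$ and the continuous coefficients are bounded on $\bar B_R(\R^l)$). Taking expectations gives
\[
\EE\big[e^{-c(\tau\wedge\tau_R)}V(X(\tau\wedge\tau_R))\big]\le V(x_0),\qquad \forall\,\tau\ge0,\ R>0.
\]
To deduce non-explosion, observe that on the event $\{\tau_R\le\tau\}$ one has $|X(\tau_R)|=R$, hence $V(X(\tau_R))\ge m_R:=\inf_{|x|\ge R}V(x)$, while $e^{-c\tau_R}\ge e^{-c\tau}$. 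Since $V\ge0$, dropping the contribution of $\{\tau_R>\tau\}$ yields
\[
V(x_0)\ge\EE\big[\mathbf{1}_{\{\tau_R\le\tau\}}e^{-c\tau_R}V(X(\tau_R))\big]\ge e^{-c\tau}m_R\,\PP(\tau_R\le\tau),
\]
so that $\PP(\tau_R\le\tau)\le e^{c\tau}V(x_0)/m_R$. By the coercivity assumption $m_R\to\infty$, whence $\PP(\tau_\infty\le\tau)=0$ for every $\tau$, i.e.\ $\tau_\infty=+\infty$ almost surely and $X(\tau)$ is a global strong solution, unique by the local uniqueness above.

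Finally, for the moment bound I would let $R\to\infty$ in the displayed supermartingale estimate: since $\tau\wedge\tau_R\to\tau$ a.s.\ and $V$ is continuous and nonnegative, Fatou's lemma gives $\EE[e^{-c\tau}V(X(\tau))]\le\liminf_{R\to\infty}\EE[e^{-c(\tau\wedge\tau_R)}V(X(\tau\wedge\tau_R))]\le V(x_0)$, which is precisely $\EE V(X(\tau))\le e^{c\tau}V(x_0)$. The only genuinely delicate points are the localization bookkeeping---verifying that the stopped stochastic integral is a true martingale and that the patched-together process is a strong solution up to $\tau_\infty$---and the passage to the limit $R\to\infty$; the rest is a direct consequence of the Lyapunov inequality.
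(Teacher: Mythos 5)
Your proof is correct, and it is essentially the canonical argument: the paper itself gives no proof of this theorem, quoting it directly from Khasminskii's book (Theorem~3.5 there), and your chain of steps --- truncation to globally Lipschitz coefficients for local well-posedness, It\^o's formula applied to $e^{-c(\tau\wedge\tau_R)}V(X(\tau\wedge\tau_R))$ to get a supermartingale bound, non-explosion from $\inf_{|x|>R}V\to\infty$ via Chebyshev on the exit event, and Fatou's lemma as $R\to\infty$ for the moment estimate --- is precisely the classical Lyapunov-function proof underlying that citation. No gaps: the stopped stochastic integral is indeed a true martingale since its integrand is bounded on $\{|X|\le R\}$, and the localization bookkeeping is handled correctly.
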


     The function $V$ is called {\it a Lyapunov function} for  equation \eqref{x-equation-r}.  In its terms  a
     sufficient condition for the mixing in equation \eqref{x-equation-r} is given by the following statement:

    \begin{proposition}\label{mixing-sufficient}
    Assume that  in addition to Assumption \ref{assume-lip-x} we have \begin{enumerate}
    \item the drift $b$ satisfies 
        \begin{equation}
    \label{mix-condition1}\langle b(x), x\rangle\leqslant-{\alpha_1}|x|+{\alpha_2} \qquad  \forall x\in\mathbb{R}^l,
    \end{equation}
    for some   constants ${\alpha_1}>0$ and ${\alpha_2}\geqslant0$, where $\langle\cdot,\cdot\rangle$ is the standard inner product in $\mathbb{R}^l$.
    \item The diffusion matrix $a(x)=\sigma(x)\sigma(x)^t$ is uniformly non-degenerate, that is
    \begin{equation}\label{mix-condition2}
    \gamma_2\mathbb{I}\leqslant a(x)\leqslant \gamma_1 \mathbb{I}\quad \forall x\in\mathbb{R}^l,
    \end{equation}
    for some  $\gamma_1\geqslant\gamma_2>0$.
    \end{enumerate}
    Then  for any $c'>0$
      equation \eqref{x-equation-r} has a smooth      Lyapunov function  $V(x)$, equal $ \exp( c' |x|)$ for $|x|\ge1$,
       estimate \eqref{v-upper-mix} holds true for its solutions for every $m\in\Nn$,
      and the   equation is mixing.
    \end{proposition}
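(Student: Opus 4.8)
The plan is to base the entire proof on the single exponential Lyapunov function $V(x)=e^{c'|x|}$, smoothed inside the unit ball so that it is $C^2$ and nonnegative while remaining equal to $e^{c'|x|}$ on $\{|x|\geqslant1\}$. First I would compute, writing $r=|x|$ and using $\partial_j r = x_j/r$, that on $\{|x|\geqslant1\}$
$$
\mathscr{L}V = c'V\left[\frac{\langle b(x),x\rangle}{r}+\frac12\left(\frac{\operatorname{tr}a(x)}{r}-\frac{\langle a(x)x,x\rangle}{r^3}+c'\frac{\langle a(x)x,x\rangle}{r^2}\right)\right].
$$
Inserting \eqref{mix-condition1} and \eqref{mix-condition2}, namely $\langle b,x\rangle/r\leqslant-\alpha_1+\alpha_2/r$, $\operatorname{tr}a\leqslant l\gamma_1$ and $\gamma_2 r^2\leqslant\langle a x,x\rangle\leqslant\gamma_1 r^2$, the bracket is bounded above by $-\alpha_1+\alpha_2+\tfrac12 l\gamma_1+\tfrac12 c'\gamma_1$ on $\{r\geqslant1\}$. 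Since $V$ and $\mathscr{L}V$ are bounded on the compact smoothing region $\{r\leqslant1\}$ where $V\geqslant1$, one obtains $\mathscr{L}V\leqslant cV$ globally for a suitable $c>0$ and \emph{every} $c'>0$; combined with $V(x)\to\infty$ this verifies the hypotheses of Theorem~\ref{lya-thm}, which delivers the existence and uniqueness of strong solutions and the bound $\EE V(X(\tau))\leqslant e^{c\tau}V(x_0)$.

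Next, to establish the uniform-in-time estimate \eqref{v-upper-mix} I would fix $c'<2\alpha_1/\gamma_1$. Then the bracket above tends to $-\alpha_1+\tfrac12 c'\gamma_1<0$ as $r\to\infty$, so $\mathscr{L}V\leqslant-\beta V$ on $\{r\geqslant R_0\}$ for some $\beta>0$, $R_0>0$, and hence $\mathscr{L}V\leqslant-\beta V+K$ on all of $\R^l$ after absorbing the bounded contribution from $\{r\leqslant R_0\}$. By Dynkin's formula this gives $\tfrac{d}{d\tau}\EE V(X(\tau))\leqslant-\beta\EE V(X(\tau))+K$, whence $\sup_{\tau\geqslant0}\EE V(X(\tau))\leqslant V(x_0)+K/\beta$; since $|x|^{2m}\leqslant C_m V(x)$ for each $m$, all polynomial moments are controlled uniformly in time. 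To upgrade this pointwise-in-time bound to the supremum in \eqref{v-upper-mix}, I would use the Markov property to write $\EE\sup_{T'\leqslant\tau\leqslant T'+1}|X(\tau)|^{2m}=\int\EE\sup_{0\leqslant s\leqslant1}|X(s;x_0')|^{2m}\,\mu_{T'}(dx_0')$ with $\mu_{T'}=\mathcal{D}(X(T'))$, and combine the uniform bound on $\EE|X(T')|^{2m}$ with the standard finite-interval estimate $\EE\sup_{0\leqslant s\leqslant1}|X(s;x_0')|^{2m}\leqslant C(1+|x_0'|^{2m})$. The latter follows from It\^o's formula applied to $|X|^{2m}$, the one-sided bound \eqref{mix-condition1}, the boundedness of the diffusion coming from \eqref{mix-condition2}, and the Burkholder--Davis--Gundy inequality.

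Finally, for mixing I would combine the geometric Foster--Lyapunov drift condition $\mathscr{L}V\leqslant-\beta V+K$ just obtained with a minorization condition on the sublevel sets $\{V\leqslant\text{const}\}$, which are compact. The minorization is precisely where the uniform non-degeneracy \eqref{mix-condition2} enters: since $\mathscr{L}$ is uniformly elliptic with locally bounded coefficients, the time-one transition kernel $P_1(x,\cdot)$ has a strictly positive density, bounded below uniformly for $x$ in any compact set, by Aronson-type Gaussian bounds or the parabolic Harnack inequality. The Harris ergodic theorem (in the form of \cite[Theorem 2.5]{mattingly}, \cite[Section~3.3]{Kulik}, and available through \cite{khasminskii}) then yields a unique stationary measure $\mu$ and the convergence $\mathcal{D}(X(\tau))\rightharpoonup\mu$ for every initial point, which is the asserted mixing. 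I expect the only genuinely delicate step to be this last one, namely verifying the uniform-on-compacts lower bound on transition densities from \eqref{mix-condition2} and matching it with the exact hypotheses of the ergodic theorem invoked; the Lyapunov computation and the moment estimates are routine once the exponential Lyapunov function is in place.
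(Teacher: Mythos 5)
Your proposal is correct in substance and follows the paper's strategy for the first two assertions, but diverges genuinely on the mixing part. For the Lyapunov function and the growth bound $\mathscr{L}V\leqslant cV$ (valid for every $c'>0$, feeding into Theorem~\ref{lya-thm}) your computation is exactly the paper's, and your dissipativity step is the same idea in different clothing: the paper keeps $c'$ arbitrary but reruns the computation with a shrunken exponent $\eta'=\min\{\alpha_1/4C,\,c'/2\}$ to get $\mathscr{L}(e^{\eta' f})\leqslant -\tfrac{\alpha_1}{2}\eta' e^{\eta' f}+C_0$, while you restrict the exponent a priori by $c'<2\alpha_1/\gamma_1$; both yield the Foster--Lyapunov bound and uniform-in-time exponential (hence polynomial) moments. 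For the supremum over unit time intervals, the paper stays with exponential functions: it integrates the inequality for $e^{\tilde\eta f(X)}$, $\tilde\eta\leqslant\eta'/2$, and applies Doob's $L^2$ inequality to the stochastic integral, using that $e^{2\tilde\eta f}\leqslant e^{\eta' f}$ has bounded expectation; your route via It\^o for $|X|^{2m}$, BDG, and the Markov property is more pedestrian but equally valid, and delivers exactly what \eqref{v-upper-mix} needs (the paper's version gives the stronger exponential sup-moment). The real difference is the mixing argument. The paper verifies the hypotheses of Khasminskii's ergodic theorem (\cite[Theorem 4.3]{khasminskii}): it shows a ball $B_R$ is positively recurrent with $\sup_{x_0\in K}\EE\tau(x_0)<\infty$ on compacts $K$, by applying It\^o's formula to $e^{\eta'\alpha_1\tau/4}|X(\tau)|^2$; the conversion of recurrence plus nondegeneracy into mixing is then entirely delegated to the cited theorem, so no transition-density estimates are needed. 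You instead invoke Harris' theorem with the geometric drift condition plus a minorization on compact sublevel sets obtained from uniform ellipticity. That route is legitimate and buys a stronger conclusion (exponential mixing, which the paper itself records separately in Example~\ref{ex_8.3} with the same references), but the step you flag as delicate genuinely is: Aronson-type lower bounds are usually stated for bounded coefficients, whereas here the drift is only locally Lipschitz with polynomial growth, so you must localize (e.g.\ modify $b$ outside a large ball, apply the Gaussian/Harnack lower bound to the modified process, and control the exit probability) before the minorization is available. With that localization supplied, your proof is complete; the paper's choice of Khasminskii's recurrence criterion is precisely what lets it avoid this technical point at the cost of a weaker (non-quantitative) mixing statement.
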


    In Appendix \ref{proof-of-mixing}
    it is shown how to derive the proposition from abstract results    in  \cite{khasminskii}.
    Moreover, it can be proved that under the 
    assumptions of the proposition the equation is exponentially mixing and  \eqref{x11}  holds,
    see  Example~\ref{ex_8.3}. 
    \medskip

    Let us decomplexify $\C^n$ to $\R^{2n}$ and 
    identify  equation \eqref{v-equation-slowtime} with a real equation \eqref{x-equation-r}, where $l=2n$ (and $x=v$).  Then
    $b( v)\cong(b_j( v )=(-i{\ep}^{-1} \lambda_jv_j+P_j(v),\;j=1,\dots,n)$, where $b_j \in \C \cong \R^2 \subset \R^{2n}$. Since in  complex
    terms  the real inner product reads as $\langle v,w\rangle=Re\sum v_j\bar w_j$, then
    \[\langle b( v),v\rangle=\langle P(v),v\rangle.\]
    So for  equation \eqref{v-equation-slowtime}  condition \eqref{mix-condition1} is equivalent to
    \begin{equation}\label{mix-v-condition1}
    \langle P(v),v\rangle\leqslant -{\alpha_1}|v|+{\alpha_2},\quad \forall v\in\mathbb{C}^n,
    \end{equation}
    for some constants ${\alpha_1}>0$ and ${\alpha_2}\geqslant0$.

    Now consider  effective equation \eqref{effective-equation}.      Since in  eq. \eqref{a-equation2} the drift is
    $$
    \Ye(a,\tau{\ep}^{-1})= (\Phi_{\tau{\ep}^{-1}\Lambda})_*P( a),
    $$
    then under the assumption  \eqref{mix-v-condition1}  we have
    \[\langle \Ye(a,\tau{\ep}^{-1}),a\rangle=
    \langle P(\Phi_{\tau{\ep}^{-1}\Lambda}a),\Phi_{\tau{\ep}^{-1}\Lambda}a\rangle\leqslant -
    \alpha_1 |\Phi_{-\tau{\ep}^{-1}\Lambda}a|+{\alpha_2}=-{\alpha_1}|a|+{\alpha_2}, \]
    for all ${\ep}$.     Therefore $ \llangle P\rrangle$ satisfies
    $$
    \langle {\llangle P\rrangle} (a),a\rangle=\lim_{T\to\infty}\frac{1}{T}\int_0^T\langle \Ye(a,\tau{\ep}^{-1}),a\rangle d\tau\leqslant-{\alpha_1}|a|+{\alpha_2}.
    $$
    We saw that  assumption  \eqref{mix-v-condition1}   implies the validity of  condition \eqref{mix-condition1} also  for the effective equation.

     As it was pointed out, if the dispersion
     matrix $\Psi$ is non-degenerate, then the  dispersion  $B$ in  the effective equation is non-degenerate as well.
     The corresponding diffusion matrix also is  non-degenerate and condition \eqref{mix-condition2} holds for it. 
     Thus  we obtained the following statement:

\begin{proposition}\label{p_6.4}
If in  equation \eqref{v-equation-slowtime} the dispersion matrix $\Psi$ is non-degenerate, the drift  $P\in \text{Lip}_{m_0}$ for some $m_0\in\mathbb{N}$ and \eqref{mix-v-condition1} holds
for some constants ${\alpha_1}>0$ and ${\alpha_2}\geqslant0$,
then the assumption of Theorem~\ref{thm-limit-mixing} holds true, and so does the assumption of Theorems~\ref{average-thm} and \ref{thm-uniform}.
 \end{proposition}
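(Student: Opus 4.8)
The plan is to assemble the facts already established in the discussion preceding the statement and feed them into Proposition~\ref{mixing-sufficient} (which rests on the Khasminskii results of Theorem~\ref{lya-thm}) and, for the uniform-in-time part, into Example~\ref{ex_8.3} together with Proposition~\ref{p_suff_cond}. There are three things to verify: Assumption~\ref{assume-mixing} for the original equation \eqref{v-equation-slowtime}, mixing of effective equation \eqref{effective-equation} with a unique stationary measure $\mu^0$, and condition $(3')$ of Assumption~\ref{a_8.1}. Since Assumption~\ref{assume-mixing} contains the first two items of Assumption~\ref{a_8.1} and also implies Assumption~\ref{assume-wp1}, these three items together cover the hypotheses of all of Theorems~\ref{average-thm}, \ref{thm-limit-mixing} and~\ref{thm-uniform}.

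First I would decomplexify \eqref{v-equation-slowtime} to an equation of the form \eqref{x-equation-r} on $\R^{2n}$, fix $\eps\in(0,1]$, and check the hypotheses of Proposition~\ref{mixing-sufficient}. For this fixed $\eps$, Assumption~\ref{assume-lip-x} holds: the drift $b(v)=-\eps^{-1}Av+P(v)$ is locally Lipschitz with the required polynomial control (as $P\in\text{Lip}_{m_0}$), and the dispersion is the constant matrix $\Psi$. The dissipativity \eqref{mix-condition1} holds because, as computed just above the statement, $\langle b(v),v\rangle=\langle P(v),v\rangle$, so \eqref{mix-v-condition1} \emph{is} precisely \eqref{mix-condition1}; the large term $\eps^{-1}Av$ is orthogonal to $v$ and drops out, so $\alpha_1,\alpha_2$ are independent of $\eps$. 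Non-degeneracy \eqref{mix-condition2} follows from that of the constant matrix $\Psi$. Proposition~\ref{mixing-sufficient} then gives, for each $\eps$, a smooth Lyapunov function equal to $\exp(c'|v|)$ for $|v|\ge1$, the moment bound \eqref{v-upper-mix}, and mixing.

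The key point for uniformity in $\eps$ — which I expect to be the main thing to articulate carefully — is that the constant $c$ in the Lyapunov inequality $\mathscr{L}(V)\le cV$ of Theorem~\ref{lya-thm} can be chosen independent of $\eps$. Indeed, for $V=\exp(c'|v|)$ the first-order part of $\mathscr{L}(V)$ equals $c' V\langle b(v),v\rangle/|v|=c' V\langle P(v),v\rangle/|v|$, in which the $\eps^{-1}$ term has again cancelled, while the second-order part is controlled by the $\eps$-independent matrix $\Psi\Psi^*$. Hence $\mathbf{E}V(v(\tau))\le e^{c\tau}V(v_0)$ with $c$ uniform in $\eps$, and so \eqref{v-upper-mix} holds with $C_{m_0'}$ independent of $\eps$ (choosing any $m_0'>m_0$). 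This yields Assumption~\ref{assume-mixing} in full.

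For effective equation \eqref{effective-equation} I would run the same verification, now with genuinely $\eps$-free coefficients: by Lemma~\ref{average-lm} the drift $\llangle P\rrangle$ lies in $\text{Lip}_{m_0}$ and, as shown above the statement, inherits the dissipativity \eqref{mix-condition1}, while the dispersion $B$ is constant and non-degenerate since $\Psi$ is, giving \eqref{mix-condition2}. Proposition~\ref{mixing-sufficient} then shows that \eqref{effective-equation} is mixing, whence its stationary measure $\mu^0$ is unique, completing the hypotheses of Theorem~\ref{thm-limit-mixing}. Finally, for condition $(3')$ I would invoke Example~\ref{ex_8.3}: under the present hypotheses the effective equation is exponentially mixing, so \eqref{x11} holds with ${\mathfrak g}_M(\tau)=\bar V(M)e^{-c\tau}$, and Proposition~\ref{p_suff_cond} upgrades this to $(3')$. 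This gives Assumption~\ref{a_8.1}, and hence the hypotheses of Theorem~\ref{thm-uniform}, which finishes the proof.
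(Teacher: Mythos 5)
Your proposal is correct and follows essentially the same route as the paper: decomplexify, note that $\langle b(v),v\rangle=\langle P(v),v\rangle$ so that \eqref{mix-v-condition1} yields \eqref{mix-condition1} both for eq.~\eqref{v-equation-slowtime} and (after averaging) for the effective equation, invoke the non-degeneracy of $\Psi$ (hence of $B$) for \eqref{mix-condition2}, apply Proposition~\ref{mixing-sufficient} to both equations, and obtain condition $(3')$ from Example~\ref{ex_8.3} together with Proposition~\ref{p_suff_cond}. Your explicit verification that the Lyapunov constants are uniform in $\eps$ (because the rotation term $-\eps^{-1}Av$ is orthogonal to $\nabla V$ when $V$ is radial) is a point the paper leaves implicit: in the appendix proof the constant in \eqref{upper-K} depends on $\sup_{|x|\leqslant1}|b(x)|$, which for eq.~\eqref{v-equation-slowtime} blows up as $\eps\to0$ unless the smoothing $f$ of $|x|$ near the origin is chosen radial, exactly as your argument requires, so this part of your write-up genuinely completes the paper's argument.
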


    \appendix

\section{Proof of Proposition \ref{mixing-sufficient}}\label{proof-of-mixing}
By  condition \eqref{mix-condition2} the diffusion   $a$ is uniformly bounded. So there exist constants  $k_1, k_2>0$ such that
    \begin{equation}\label{a-upper-ap}
    \text{Tr}(a(x))\leqslant k_1,\; \|a(x)\|\leqslant k_2 \quad \forall x\in\mathbb{R}^l.
    \end{equation}
    Take $V(x)=e^{{c'}f(x)}$, where ${c'}>0$ is a  positive constant and $f(x)$ is a non-negative smooth function, equal $|x|$ for $|x|\ge1$, 
    and such that its first and second derivatives are bounded by 3.  Then
    \[\frac{\partial V(x)}{\partial x_j}={c'} V(x)\partial_{x_j}f(x),\quad \frac{\partial^2V(x)}{\partial x_j\partial x_j}={c'}V(x)\partial_{x_ix_j}f(x)+{c'}^2V(x)\partial_{x_i}f(x)\partial_{x_j}f(x).\]
  Therefore  we have that
        \[\mathscr{L}\big(V(x)\big)={c'} V(x) \mathcal{K}(c',x), \]
        where $$\mathcal{K}(c',x)=\sum_{j=1}^lb_j(x)\partial_{x_j}f(x)+\frac{1}{2}\sum_{ij}a_{ij}(x)\partial_{x_ix_j}f(x)+\frac{1}{2}c'\sum_{ij}a_{ij}(x)\partial_{x_i}f(x)\partial_{x_j}f(x).$$
        From the condition \eqref{mix-condition1} and \eqref{a-upper-ap} it is evident that
        \begin{equation}\label{upper-K}
        \begin{cases}|\mathcal{K}(c',x)|\leqslant (c'+1)C, &\text{if } |x|<1,\\
        \mathcal{K}(c',x)\leqslant (-\alpha_1+\frac{\alpha_2}{|x|}+\frac{C}{|x|}+c'C), &\text{if }|x|\geqslant1,
        \end{cases}\end{equation}
        where $C>0$ is a constant depending on $k_1$, $k_2$ and $\sup_{|x|\leqslant1}|b(x)|$.
    Then we achieve that
    \[\mathscr{L}\big(V(x)\big)\leqslant cV(x)\quad \forall x\in\mathbb{R}^l,\]
    where $c=c'(\alpha_2+(c'+1)C)$. Clearly $\inf_{|x|>R}V(x)\to\infty$ as $R\to\infty$. So
    $V(x)$ is a Lyapunov function for  equation \eqref{x-equation-r}. Then  by  Theorem~\ref{lya-thm} for any $x_0\in \R^l$   the
     equation has a unique solution $X(\tau)=X(\tau;x_0)$, equal $x_0$ at $\tau=0$, which satisfies
    \[
    {\EE}e^{{c'}f(X(\tau))}\leqslant e^{c\tau} e^{{c'}f(x_0)}\quad\forall \tau\geqslant0.
    \]

    Let us     apply  It\^o's formula to the process
    $F\big(X(\tau)\big)=e^{\eta'f(X(\tau))}$, where   $ 0<\eta'\leqslant\frac{1}{2}{c'}$ is a constant to be determined later.
    Then {\color{red}
    \[\begin{split} 
    d  F\big( X \big)&= \mathscr{L}\big(F( X )\big)d\tau + \eta' F(X) \lan\nabla f(x), \sigma^t(  X) dW\ran\\
 &=\eta' F(X)\mathcal{K}(\eta',X) d\tau +  \eta' F(X) \lan\nabla f(x), \sigma^t(  X) dW\ran.  
 \end{split}\] }
   By \eqref{upper-K}, choosing $\eta'=\min\{\frac{\alpha_1}{4C},\frac{c'}{2}\}$, we have
    $$
     F(X)\mathcal{K}(\eta',X)\le -\frac{\alpha_1}{2}F(X) +C_0(\alpha_1, \eta',k_1,k_2)
    $$
    uniformly in  $X$. Then {\color{red}
   \begin{equation}\label{exp-diff-eq}
   d F\big( X \big) \leqslant \big( -\frac{\alpha_1}{2}\eta' F\big( X \big)+C_0\big)d\tau + \eta' F(X) \lan \nabla f(x),\sigma^t(  X) dW\ran,
   \end{equation} }
  where the constant $C_0>0$ depends on $k_1$, $k_2$, $\alpha_1, \eta'$ and  $\alpha_2$. Taking  expectation and applying   Gronwall's lemma
  we obtain that {\color{red}
    \begin{equation}\label{exp-bound}
     {\EE}e^{\eta'f(X(\tau))}\leqslant e^{-\frac{\alpha_1}{2}\eta'\tau} e^{\eta'f(x_0)}+C_1,\quad
    \tau\geqslant0,
    \end{equation} }
    where   $C_1>0$ depends on the same parameters as $C_0$.

    Now let us take any $T\geqslant0$ and for 
     $\tau\in[T,T+1]$ consider relation \eqref{exp-diff-eq}, where $F(X)$ is replaced by
    $\tilde F(X) = e^{ \tilde\eta f(X)}$ with $0<\tilde\eta \le \tfrac12 \eta'$, and integrate it from $T$ to $\tau$:
    \be\label{ccc}
    \begin{split}\tilde F(X(\tau))&\le  \tilde F(X(T))+C_0 + \tilde{\eta} \int_T^\tau
     \tilde F(X) \lan \nabla f(x),\sigma^t(s, X)dW\ran\\
     &=: \tilde F(X(T)) + C_0+\M(\tau). \end{split}
    \ee
    Due to estimate \eqref{exp-bound}, $\M(\tau)$ is a continuous square-integrable martingale. Therefore by Doob's inequality,
    \[\begin{split}
    \EE \sup_{T\le \tau\le T+1} |\M(\tau)|^2 \le 4 \EE |\M(T+1)|^2 \le C \int_T^{T+1} \EE \tilde F^2 (X(s))ds
    \le C \int_T^{T+1} \EE F (X(s))ds \le C',
    \end{split}
    \]
    where $C'$ depends on $k_1$, $k_2$, $\alpha_1, \eta',\alpha_2$ and  $|x_0|$.  From here, \eqref{ccc} and \eqref{exp-bound} it follows that {\color{red}
    $$
     \EE \sup_{T\le \tau\le T+1} e^{ \tilde\eta f(X(\tau))} \le C^{''},
    $$ }
    where $C^{''}$ depends on the same parameters as $C'$. This bound implies   that solutions $X(\tau)$ satisfy estimate
     \eqref{v-upper-mix} in Assumption~\ref{assume-mixing} for every $m\ge0$.

     To prove the proposition it remains to show that under the imposed assumptions   equation \eqref{x-equation-r} is mixing.  Due to
\cite[Theorem 4.3]{ khasminskii} we just need to verify  that there exists an absorbing 
 ball $B_R=\{|x|\leqslant R\}$ such that for any compact set $K\subset\mathbb{R}^l\setminus B_{R}$
 \begin{equation}\label{tau-stop-time}
 \sup_{x_0\in K}\EE \tau(x_0)<\infty,
 \end{equation}
 where $\tau(x_0)$ is the hitting time for $B_R$ of  trajectory $X(\tau; x_0)$.
   Indeed, let $x_0\in K\subset \mathbb{R}^l\setminus B_{R}$ for some $R>0$ to be determined later. 
   We set $\tau_M:=\min\{\tau(x_0), M\}$, $M>0$.
  Applying  It\^o's formula to the process $F(\tau,X(\tau))=e^{\frac{1}{4}\eta'\alpha_1\tau}|X(\tau)|^2$ and using 
   \eqref{exp-bound} we find that {\color{red}
 \[
 d F(\tau, X(\tau))= \Big( \frac{\eta'\alpha_1}{4}F(\tau,X(\tau))+\mathscr{L}\big(F(\tau,X(\tau))\big)\Big)d\tau+d\mathcal{M}(\tau),
 \] }
 where $\mathcal{M}(\tau)$ is a corresponding stochastic integral.
By \eqref{a-upper-ap},  \eqref{exp-bound}  and \eqref{mix-condition1}, we have
\[{\EE}e^{\frac{\eta'\alpha_1}{4}\tau_M}|X(\tau_M)|^2+{\EE}\int_0^{\tau_M}e^{\frac{\eta'\alpha_1 s}{4}}(2\alpha_1|X(\tau)|-C_3)ds
\leqslant |x_0|^2+2e^{\eta'f(x_0)}=:\gamma(x_0),\]
where $C_3>0$ depends  on $\alpha_1$, $\alpha_2$, $k_1$ and $k_2$.  Since $|X(s)|\geqslant R$
for $0\leqslant s\leqslant \tau_M$,   then
$$
{\EE}\Big(C_3 \int_0^{\tau_M}e^{\frac{\eta'\alpha_1 s}{4}}ds\Big)\leqslant \gamma(x_0),
$$
if $R\ge {C_3/\alpha_1}$.
Therefore ${\EE} \tau_M  \leqslant {\gamma(x_0)}/{C_3}$.
 Letting $M\to\infty$ we  verify \eqref{tau-stop-time} for  $R\ge {C_3/\alpha_1}$.
This completes  the proof of Proposition \ref{mixing-sufficient}.

  \section{Representation of martingales}\label{r-o-m}

    Let $\{M_k(t),\;t\in[0,T]\}$, $k=1,\dots,d$, be continuous square-integrable 
   martingales on a filtered probability space $(\Omega,\cF,\mathbf{P},\{\cF_t\})$. We recall that their brackets (or their cross-variational process)
   is an  $ \{\cF_t\}$-adapted continuous matrix-valued process of bounded variation  $\langle M_k,M_j\rangle(t), 1\le k,j\le d$, a.s. vanishing at 
   $t=0$, and such that for all $k,j$ the process 
   $
    M_k(t) M_j(t) - \langle M_k,M_j\rangle(t)
   $
   is an  $ \{\cF_t\}$-martingale; see  \cite{brownianbook}, Definition~1.5.5 and Theorem~1.5.13.

 \begin{theorem}
 [\cite{brownianbook}, Theorem 3.4.2]
 Let $(M_k(t), 1\le k\le d)$ be a vector of martingale as above. Then  there exists an extension
  $(\tilde\Omega,\tilde{\mathcal{F}},\tilde{\mathbf{P}},\tilde{\mathcal{F}}_t)$  of the probability space
  on which are defined independent standard
    Wiener processes $W_1(t),\dots,W_d(t)$, and a measurable adapted
    matrix $X= (X_{k j}(t))_{k,j=1,\dots,d}$, $t\in[0,T]$    such that   $\EE \int_0^{T} \| X(s)\|^2 ds <\infty$, and 
     $\tilde{\mathbf{P}}$-a.s. we have the representations
 \[M_k(t)-M_k(0)=\sum_{j=1}^d\int_0^tX_{kj}(s)dW_j(s),\quad 1\leqslant k\leqslant d, \; t\in[0,T],\]
 and
 \[\langle M_k,M_j\rangle(t)=\sum_{l=1}^d\int_0^tX_{kl}(s)X_{jl}(s)ds,\quad 1\leqslant k,j\leqslant d, \; t\in[0,T].\]

 \end{theorem}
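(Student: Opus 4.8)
The plan is to reduce the statement to L\'evy's characterization of Brownian motion, following the classical route for realizing continuous martingales as It\^o integrals. The natural starting object is the matrix-valued bracket process $\big(\lan M_k,M_j\ran(t)\big)_{k,j=1}^d$, which is continuous, adapted, of bounded variation, and for each $t$ symmetric and nonnegative-definite: indeed $\sum_{k,j}\xi_k\xi_j\lan M_k,M_j\ran(t)=\lan \textstyle\sum_k\xi_k M_k\ran(t)\ge0$ for every $\xi\in\R^d$. Under the absolute-continuity of the brackets that is part of the cited theorem's hypothesis (and that holds automatically in all our applications, where each $M_k$ is itself an It\^o integral against Brownian motions), I would write
\[
\lan M_k,M_j\ran(t)=\int_0^t a_{kj}(s)\,ds,\qquad \EE\int_0^T\operatorname{Trace} a(s)\,ds<\infty,
\]
with $a(s)=\big(a_{kj}(s)\big)$ a progressively measurable, symmetric, nonnegative-definite matrix for a.e.\ $s$.

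First I would set $X(s)=a(s)^{1/2}$, the principal (symmetric, nonnegative) square root, so that $X(s)X(s)^*=a(s)$; measurability of $s\mapsto X(s)$ follows from continuity of the square-root map on nonnegative symmetric matrices. The difficulty is that $a(s)$, hence $X(s)$, may be degenerate, so one cannot simply define $W$ by inverting $X$ in ``$dW=X^{-1}dM$''. To circumvent this I would pass to an extension $(\tilde\Omega,\tilde{\cF},\tilde{\mathbf P},\tilde{\cF}_t)$ carrying, in addition to $M$, an \emph{independent} standard $d$-dimensional Brownian motion $B$, and set
\[
W(t)=\int_0^t X^{+}(s)\,dM(s)+\int_0^t\big(\mathbb I-\Pi(s)\big)\,dB(s),
\]
where $X^{+}(s)$ is the Moore--Penrose pseudo-inverse of $X(s)$ and $\Pi(s)=X^{+}(s)X(s)=X(s)X^{+}(s)$ the orthogonal projection onto the range of $a(s)$; both depend measurably on $a(s)$.

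The key verification is that $W$ is a standard $d$-dimensional Brownian motion, which I would get from L\'evy's theorem. Clearly $W$ is a continuous local martingale with $W(0)=0$, and using $X^{+}a(X^{+})^*=\Pi$ for the first integral, $(\mathbb I-\Pi)(\mathbb I-\Pi)^*=\mathbb I-\Pi$ for the second, and the independence of $M$ and $B$ to kill the cross term, its brackets are
\[
\lan W_k,W_j\ran(t)=\int_0^t\big(\Pi(s)+(\mathbb I-\Pi(s))\big)_{kj}\,ds=\delta_{kj}\,t,
\]
so $W$ is Brownian. For the representation I would then form $D(t)=M(t)-M(0)-\int_0^t X\,dW$; since $X\,dW=(XX^{+})\,dM+X(\mathbb I-\Pi)\,dB=\Pi\,dM$ (because $X\Pi=X$ forces $X(\mathbb I-\Pi)=0$), one has $D(t)=\int_0^t(\mathbb I-\Pi)\,dM$, whose bracket $\int_0^t\big[(\mathbb I-\Pi)a(\mathbb I-\Pi)^*\big]_{kk}\,ds$ vanishes identically, since $(\mathbb I-\Pi)X=0$. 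A continuous martingale starting at $0$ with vanishing quadratic variation is a.s.\ identically zero, giving $M(t)-M(0)=\int_0^t X\,dW$; the bracket formula $\lan M_k,M_j\ran(t)=\sum_l\int_0^t X_{kl}(s)X_{jl}(s)\,ds$ is then immediate from $XX^*=a$.

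The main obstacle is exactly the possible degeneracy of $a(s)$: one must supply the ``missing'' independent randomness on $\ker a(s)$ through the extension, and check that the pseudo-inverse $X^{+}$ together with the projection $\Pi$ assembles back to the identity in the bracket and representation computations. This is the only genuinely nonroutine ingredient; the attendant measurability of $X,X^{+},\Pi$ and the integrability bounds needed to make the stochastic integrals well defined are standard.
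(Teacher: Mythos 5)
Your proposal is correct, and it is essentially the standard argument behind the result the paper cites without proof (Theorem 3.4.2 of Karatzas--Shreve): pass to an extension carrying an independent Brownian motion, take $X(s)=a(s)^{1/2}$ with its Moore--Penrose pseudo-inverse and the range projection $\Pi(s)$, define $W$ by combining $X^{+}\,dM$ with $(\mathbb I-\Pi)\,dB$, verify via L\'evy's characterization that $W$ is a standard Brownian motion, and kill the discrepancy $\int(\mathbb I-\Pi)\,dM$ by showing its quadratic variation vanishes. Since the paper itself gives only the citation and no proof, there is nothing further to compare; your reconstruction, including the correct handling of the absolute-continuity hypothesis on the brackets and of possible degeneracy of $a(s)$, matches the cited source's approach.
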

 Now let $(N_1(t),\dots, N_d(t))\in\mathbb{C}^d$ be a vector of  complex continuous  square-integrable 
 martingales. Then
 $N_j(t)=N^+_j(t)+i N^-_j(t)$, where   $\big(N^+_1(t),N^-_1(t)$, $\dots, N^+_d(t),N^-_d(t) \big)\in\mathbb{R}^{2d}$ is a vector of real continuous martingales.
 The brackets $\lan N_i, N_j\ran$ and $\lan  N_i,  \bar N_j\ran$ are defined by linearity. For example,
 $
 \lan N_i, N_j\ran = \lan N_i^+, N_j^+\ran - \lan N_i^-, N_j^-\ran +i \lan N_i,^+ N_j^-\ran+ i \lan N_i,^- N_j^+\ran.
 $\footnote{ There is no need to define  the brackets  $\lan \bar N_i,  \bar N_j\ran$ and  $\lan \bar N_i,  N_j\ran$ since these are just the processes, 
 complex-conjugated to  $\lan  N_i,   N_j\ran$ and $\lan  N_i,  \bar N_j\ran$, respectively.}
 Equivalently $ \lan N_i, N_j\ran$  may be be defined as  the only adapted continuous complex
 process of bounded variation, vanishing at zero, and such that
 $
 N_i N_j - \lan N_i, N_j\ran
 $
 is a martingale. The brackets  $\lan  N_i,  \bar N_j\ran$ may be defined similarly. The result above implies 
  a representation theorem for complex continuous martingales. Below we give  its special case, relevant for our work.

 \begin{corollary} Suppose that the brackets $\lan N_i, N_j\ran(t)$ and $\lan \bar N_i, \bar N_j\ran(t)$ all vanish, while the brackets
  $ \lan N_i, \bar N_j\ran(t)$, $1\le i,j\le d$, a.s. are absolutely continuous complex  processes.  Then there exist
  an adapted process $\Psi(t)$, valued in $d\times d$ complex matrices, satisfying $\EE \int_0^{T} \| \Psi(t)\|^2 dt<\infty$,
   and   independent standard complex Wiener processes   $\beta^c_1(t),\dots,\beta^c_d(t) $, all defined on an extension of the original
 probability space,   such that a.s.
 \[
 N_j(t)-N_j(0)=\sum_{k=1}^d\int_0^t\Psi_{jk}(s)d\beta^c_k(s) \quad \forall \, 0\le t \le T,
 \; j=1,\dots,d.
 \]
 Moreover, $ \lan N_i, N_j\ran(t)  \equiv 0$ and 
 $$
 \lan N_i, \bar N_j\ran(t) =  2 \int_0^t (\Psi \Psi^*)_{ij} (s) ds, \quad 1\le i,j\le d.
 $$
 \end{corollary}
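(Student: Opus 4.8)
The plan is to reduce to the real martingale representation theorem stated just above, and to use the vanishing of the brackets $\lan N_i,N_j\ran$ to control the recombination into complex integrals. First I would decomplexify: writing $N_j=N_j^++iN_j^-$, I form the $\R^{2d}$-valued continuous square-integrable martingale $\mathbf M=(N_1^+,\dots,N_d^+,N_1^-,\dots,N_d^-)$. Its cross-variation matrix is absolutely continuous, since each real bracket $\lan N_i^\pm,N_j^\pm\ran$ is a real or imaginary part of the complex brackets $\lan N_i,N_j\ran\equiv0$ and $\lan N_i,\bar N_j\ran$, the latter being absolutely continuous by hypothesis. Denote by $G(t)$ the symmetric nonnegative $2d\times2d$ density, so $\lan\mathbf M_\alpha,\mathbf M_\beta\ran(t)=\int_0^t G_{\alpha\beta}(s)\,ds$.

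The key structural observation is that the conformal condition $\lan N_i,N_j\ran=0$ forces $G$ to be complex-linear. Separating $\lan N_i,N_j\ran=0$ into real and imaginary parts gives $\lan N_i^+,N_j^+\ran=\lan N_i^-,N_j^-\ran$ and $\lan N_i^+,N_j^-\ran+\lan N_j^+,N_i^-\ran=0$, so in the chosen ordering
\[
G=\begin{pmatrix} A & C \\ -C & A\end{pmatrix},\qquad A=A^t\ge0,\quad C=-C^t.
\]
Such a $G$ is precisely one commuting with the standard complex structure $J=\bigl(\begin{smallmatrix}0&-I\\ I&0\end{smallmatrix}\bigr)$ on $\R^{2d}\cong\C^d$. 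Under the identification $R+iS\leftrightarrow\bigl(\begin{smallmatrix}R&-S\\ S&R\end{smallmatrix}\bigr)$, the matrix $G$ corresponds to the Hermitian nonnegative matrix $H:=A-iC$, which is exactly the density of $\tfrac12\lan N_i,\bar N_j\ran$, and $\int_0^T\EE\,\operatorname{Tr}H\,dt=\tfrac12\sum_i\EE|N_i(T)-N_i(0)|^2<\infty$.

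Because $J$-commutation is preserved under the continuous principal square root, $G^{1/2}$ again has this block form and corresponds to the Hermitian nonnegative $\Psi:=H^{1/2}$, for which $\Psi\Psi^*=H$. Now I apply the cited real representation theorem to $\mathbf M$: on an extension it furnishes a standard $\R^{2d}$-valued Brownian motion $W$ and an adapted $X$ with $d\mathbf M=X\,dW$ and $XX^t=G$ a.e. Since $XX^t=G=(G^{1/2})(G^{1/2})^t$, a measurable adapted orthogonal $O(t)$ can be selected with $X=G^{1/2}O$ (extend the isometry $G^{1/2,t}v\mapsto X^t v$ to $O^t$), so that $\tilde W:=\int O\,dW$ is again a standard Brownian motion by L\'evy's theorem and $d\mathbf M=G^{1/2}\,d\tilde W$. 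Pairing its coordinates as $\beta^c_k:=\tilde W_k+i\tilde W_{d+k}$ produces $d$ independent standard complex Wiener processes, with $\lan\beta^c_k,\beta^c_l\ran=0$ and $\lan\beta^c_k,\bar\beta^c_l\ran=2\delta_{kl}t$; translating $d\mathbf M=G^{1/2}\,d\tilde W$ through the identification above — where the complex-linearity of $G^{1/2}$ is exactly what suppresses the anti-holomorphic $d\bar\beta^c_k$ terms — gives $N_j(t)-N_j(0)=\sum_k\int_0^t\Psi_{jk}\,d\beta^c_k$. The bracket formulas then follow by It\^o, since $\lan\beta^c_k,\beta^c_l\ran=0$ yields $\lan N_i,N_j\ran\equiv0$ and $\lan\beta^c_k,\bar\beta^c_l\ran=2\delta_{kl}t$ yields $\lan N_i,\bar N_j\ran=2\int_0^t(\Psi\Psi^*)_{ij}$.

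The main obstacle is the one step that genuinely uses the hypothesis: ensuring the recombined representation is holomorphic, i.e.\ free of $d\bar\beta^c$ terms. A naive pairing of the real Brownian motions delivered by the real theorem generically produces both a $\Psi\,d\beta^c$ and a $\Theta\,d\bar\beta^c$ contribution, with $\lan N_i,\bar N_j\ran=2\int(\Psi\Psi^*+\Theta\Theta^*)_{ij}$; it is precisely the vanishing of $\lan N_i,N_j\ran$ (equivalently the $J$-commutation of $G$) that permits the orthogonal alignment with the $J$-compatible square root $G^{1/2}$ and thereby kills $\Theta$. The only technical point requiring care is the adapted measurable selection of $O$, which is routine and of the same nature as the selection already used inside the real representation theorem.
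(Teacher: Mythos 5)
Your proof is correct, and it takes exactly the route the paper intends: the paper states this corollary without proof, as a direct consequence of the real representation theorem (Theorem B.1 of the appendix), which is precisely your reduction via decomplexification. The one genuinely nontrivial step --- using $\langle N_i,N_j\rangle\equiv 0$ to force the quadratic-variation density $G$ to commute with the complex structure $J$, and then rotating the Brownian motion delivered by the real theorem so that the dispersion becomes the $J$-compatible square root $G^{1/2}\leftrightarrow H^{1/2}=\Psi$ (which is what kills the $d\bar\beta^c$ terms) --- is exactly the detail the paper leaves to the reader, and you have supplied it correctly, including the integrability bound $\EE\int_0^T\operatorname{Trace}H\,dt=\tfrac12\sum_i\EE|N_i(T)-N_i(0)|^2<\infty$ and the measurable adapted selection of the orthogonal factor.
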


   \section{It\^o's formula for complex processes} \label{a_ito}
 Consider a complex It\^o process  $v(t)\in\C^n$, defined on a  filtered probability space:
 \be\label{ito1}
 dv(t) = g(t)\,dt   + M^1(t)\,dB(t) + M^2(t) \,d\bar B(t)\,.
 \ee
 Here $v(t)$ and $g(t)$ are   adapted processes in $\C^n$, $M^1$ and $M^2$
 are   adapted  processes in the space of complex $n\times N$--matrices,
 and $B(t)=(\beta^c_1(t),\dots,\beta^c_N(t))$,  $\bar B(t)=(\bar\beta^c_1(t),\dots,\bar\beta^c_N(t))$,
 where $\{\beta^c_j\}$ are  independent standard complex Wiener processes. 
 We recall that for a $C^1$--function $f$ on $\C^n$,
 $\
 {\p f}/{\p z_j}=\frac12 \big( {\p f}/{\p x_{j}}-i  {\p f}/{\p y_{j}}\big) $
 and
 $
 {\p f}/{\p \bar z_j}=\frac12 \big( {\p f}/{\p x_{j}} + i  {\p f}/{\p y_{j}}\big).
 $
 If $f$ is a polynomial of $z_j$ and $\bar z_j$, then $\p f/\p z_j$ and  $\p f/\p \bar  z_j$ may be calculated as if $z_j$ and
 $\bar z_j$ are independent variables.

  The processes  $g, M^1, M^2$  and the function $f(t,v)$ in the theorem below are
 assumed to satisfy the usual conditions,  needed for the applicability of It\^o's formula (e.g. see in \cite{brownianbook}), 
  which we do not repeat here.

 \begin{theorem} \label{t_Ito}
 Let $f(t,v)$ be a $C^2$--smooth complex function. Then
 \be\label{c_ito}
 \begin{split}
& df(t,v(t)) =\Big[ \p f/\p t +d_vf(t,v) g + d_{\bar v} f(t,v) \bar g \\
 &+\Tr \big[\big( M^1 ( M^2)^t + M^2 ( M^1)^t\big) \frac{\p^2 f}{\p v \p v}
 + \big( \bar M^1 ( \bar M^2)^t + \bar M^2 ( \bar M^1)^t\big) \frac{\p^2 f}{\p \bar v \p \bar v}\\
&  + {2}\big( M^1( \bar M^1)^t + M^2 ( \bar M^2)^t\big) \frac{\p^2 f}{\p v \p \bar v}\big] \Big] dt\\
&+ d_vf(M^1 dB + M^2 d\bar B )+ d_{\bar v}f(\bar M^1 d\bar B + \bar M^2 d  B )\,.
 \end{split}
 \ee
 Here $d_vf(t,v) g =\sum \frac{\p f}{\p v_j}g_j $,  $d_{\bar v}f(t,v) \bar g =\sum \frac{\p f}{\p \bar v_j}\bar g_j $,
 $\frac{\p^2 f}{\p v \p v} $ stands for the matrix with  entries $ \frac{\p^2 f}{\p v_j \p v_k} $, etc.
 If the function $f$ is real valued, then $d_{\bar v}f( v) =\overline{d_{ v}f( v)} $,
  and  the It\^o term, given by the second and third lines of   \eqref{c_ito}, reeds 
 $$
 2\text {Re} \Tr \Big[ ( M^1 ( M^2)^t + M^2 ( M^1)^t ) \frac{\p^2 f}{\p v \p v}+
 ( M^1 ( \bar M^1)^t +  M^2 (\bar M^2)^t ) \frac{\p^2 f}{\p v \p \bar v}\Big].
 $$
 \end{theorem}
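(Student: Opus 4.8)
The plan is to reduce \eqref{c_ito} to the classical real-valued, multidimensional It\^o formula by decomplexifying $\C^n\simeq\R^{2n}$, pushing all the genuinely complex content into the It\^o multiplication table for the driving noises. First I would record that table. Writing each $\beta^c_l=\beta^+_l+i\beta^-_l$ with the $\beta^\pm_l$ independent standard real Wiener processes, the rules $d\beta^+_l\,d\beta^+_m=d\beta^-_l\,d\beta^-_m=\delta_{lm}\,dt$ and $d\beta^+_l\,d\beta^-_m=0$ yield at once
\[
d\beta^c_l\,d\beta^c_m=0,\qquad d\bar\beta^c_l\,d\bar\beta^c_m=0,\qquad d\beta^c_l\,d\bar\beta^c_m=2\,\delta_{lm}\,dt .
\]
These are the only nonzero second-order products, and the factor $2$ in the last one is the origin of every factor $2$ appearing in \eqref{c_ito}.

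Next I would write $v_j=x_j+iy_j$ and regard $f(t,v)$ as a $C^2$ function $\tilde f$ of $t$ and the $2n$ real coordinates $(x_1,y_1,\dots,x_n,y_n)$, to which the standard It\^o formula applies under the integrability hypotheses referred to before the theorem. The first-order part is then pure chain rule: the total-differential identity $\sum_j(\partial_{x_j}\tilde f\,dx_j+\partial_{y_j}\tilde f\,dy_j)=\sum_j(\partial_{v_j}f\,dv_j+\partial_{\bar v_j}f\,d\bar v_j)$, combined with $dv_j=g_j\,dt+(M^1\,dB+M^2\,d\bar B)_j$ and its conjugate, reproduces both the drift terms $d_vf\,g+d_{\bar v}f\,\bar g$ and the two martingale terms on the last line of \eqref{c_ito}.

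The substance of the proof is the It\^o correction. Here I would compute the three quadratic covariations directly from \eqref{ito1} and the multiplication table:
\[
dv_j\,dv_k=2\bigl(M^1(M^2)^t+M^2(M^1)^t\bigr)_{jk}\,dt,\qquad
d\bar v_j\,d\bar v_k=2\bigl(\bar M^1(\bar M^2)^t+\bar M^2(\bar M^1)^t\bigr)_{jk}\,dt,
\]
and
\[
dv_j\,d\bar v_k=2\bigl(M^1(\bar M^1)^t+M^2(\bar M^2)^t\bigr)_{jk}\,dt .
\]
Inserting these into the second-order part of the Taylor expansion of $f$ in the variables $(v,\bar v)$ — in which the two pure blocks carry a factor $\tfrac12$ and the mixed block carries a factor $1$ (from its two symmetric orderings) — and recognizing each double sum $\sum_{j,k}(\cdot)_{jk}\,\partial^2 f/\partial(\cdot)_j\partial(\cdot)_k$ as a trace, produces exactly the bracketed It\^o term of \eqref{c_ito}, with the correct coefficients $1,1,2$.

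The hard part will be the bookkeeping that links the real Hessian of $\tilde f$ to the Wirtinger blocks $\partial^2f/\partial v\partial v$, $\partial^2f/\partial\bar v\partial\bar v$, $\partial^2f/\partial v\partial\bar v$: one must track matrix transposes (not Hermitian conjugates) and the symmetry of each Hessian block so that the sums collapse to the stated traces. The two pure blocks cause no difficulty, since both the blocks $\partial^2 f/\partial v_j\partial v_k$ and the matrices $M^1(M^2)^t+M^2(M^1)^t$ are symmetric; the only delicate point is the transpose convention in the mixed trace, which must be fixed consistently with the notation introduced after \eqref{c_ito}. Finally, for real-valued $f$ the identity $\partial_{\bar v}f=\overline{\partial_vf}$ forces $\partial^2f/\partial\bar v\partial\bar v=\overline{\partial^2f/\partial v\partial v}$ and makes $\partial^2f/\partial v\partial\bar v$ Hermitian; together with the conjugate-symmetry of the covariation matrices this merges the four Hessian contributions into the single $2\,\text{Re}\,\Tr[\cdots]$ expression, which completes the proof.
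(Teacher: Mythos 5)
Your argument is correct, but it takes a genuinely different route from the paper's. You prove the formula directly: record the complex multiplication table $d\beta^c_l\,d\beta^c_m=d\bar\beta^c_l\,d\bar\beta^c_m=0$, $d\beta^c_l\,d\bar\beta^c_m=2\delta_{lm}\,dt$, compute the three covariation matrices from \eqref{ito1} (your three formulas are right, including all factors of $2$), and substitute them into the second-order Wirtinger form of the real It\^o formula, so that the coefficients $1,1,2$ in \eqref{c_ito} come out of the $\tfrac12,\tfrac12,1$ weights of the Taylor blocks. The paper deliberately avoids exactly the step you defer to the end --- the bookkeeping identifying the real Hessian of $\tilde f$ with the Wirtinger blocks, which it calls ``rather heavy'' --- and replaces it by a method of undetermined coefficients: from the real It\^o formula one knows a priori that the correction has the shape \eqref{hren} with matrices $Q^1,Q^2,Q^3$ quadratic in $M^1,M^2$ and independent of $f$, and these are then read off by evaluating both sides on the monomials $f=v_{i_1}v_{i_2}$, $f=\bar v_{i_1}\bar v_{i_2}$, $f=v_{i_1}\bar v_{i_2}$ with $g=0$, $v(0)=0$ and constant $M^1,M^2$, where $f(v(t))$ is expanded by hand using the brackets \eqref{brackets}. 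Your route buys self-containedness (no structural ``the correction must have this shape'' step), at the price of the Hessian change-of-variables bookkeeping, including the transpose convention in the mixed trace that you correctly flag: since the coefficient matrix $M^1(\bar M^1)^t+M^2(\bar M^2)^t$ of the mixed block is Hermitian rather than symmetric, the row/column convention for $\p^2 f/\p v\,\p\bar v$ genuinely matters there, whereas it is harmless in the two symmetric pure blocks. The paper's route concentrates that same subtlety into the single evaluation at $f=v_{i_1}\bar v_{i_2}$; either way the delicate point is the same, and both arguments close it correctly.
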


 To prove the result one may re-write $v(t)$ as an It\^o process in $R^{2d}$ in terms of the real Wiener processes Re\,$W_j(t)$, Im\,$W_j(t)$,
 apply the usual It\^o's formula to $f(t,v(t))$ and then re-write the result back in terms of the
 complex Wiener processes. Corresponding straightforward
 calculation is rather heavy and it is not easy to make it without mistake. Below we suggest a better way to get the formula.

 \begin{proof}
 The linear part of the formula \eqref{c_ito}, given by its first and forth
  lines, follows from the real case by linearity. It remains to prove that the
 It\^o term has the form, given by the second and third lines. From the real  formula we see that the  It\^o term is an expression,
 linear in $\p^2 f/\p v \p v$, $\p^2 f/\p \bar v \p \bar v$ and $\p^2 f/\p v \p \bar v$, with the coefficients, quadratic in the matrices $M^1$ and $M^2$.
 So it may be written as
 \be\label{hren}
 \Big[ \Tr \big( Q^1  \frac{\p^2 f}{\p v \p v}\big)  + \Tr \big( Q^2  \frac{\p^2 f}{\p \bar v \p \bar v}\big)  + \Tr \big( Q^3  \frac{\p^2 f}{\p v \p \bar v}\big)
 \Big]dt,
 \ee
 where $Q^j$ are complex $n\times n$-matrices, quadratic in $M^1, M^2$. We should show that they
 have the form, specified in
 \eqref{c_ito}. To do that we note that since  the processes $\beta^c_j$ are independent and have the form \eqref{complex}, then  for all $j,l$ their
 brackets have the following form:
 \be\label{brackets}
 \lan \beta^c_j, \beta^c_l \ran = \lan \bar\beta^c_j, \bar\beta^c_l \ran = 0, \quad
 \lan \beta^c_j, \bar\beta^c_l \ran = \lan \bar\beta^c_j, \beta^c_l \ran = 2 \delta_{j,l} t.
 \ee
 Now let in \eqref{ito1} $g=0, v(0)=0$ and $M^1, M^2$ are constant matrices. Then
 $$
 v(t) = M^1 B(t) + M^2 \bar B(t).
 $$
 Taking $f(v) = v_{i_1} v_{i_2}$ and using \eqref{brackets} we see that
 \[\begin{split}
 f(v(t)) &= \Big ( \sum_j M^1_{i_1 j} B_j(t)  + \sum_j M^2_{i_1 j} \bar B_j(t) \Big) \cdot
  \Big ( \sum_j M^1_{i_2 j} B_j(t)  + \sum_j M^2_{i_2 j} \bar B_j(t) \Big)\\
 & = \Big[ \big( M^1 (M^2)^t\big)_{i_1 i_2} +  \big( M^2 (M^1)^t\big)_{i_1 i_2} \Big]  2\,t +\text{ a martingale}.
  \end{split}
 \]
 Since due to \eqref{hren} the linear in $t$ part should equal $(Q_{i_1 i_2}+Q_{i_2 i_1}) t$, then
 $
 Q^1 = M^1 (M^2)^t +  M^2 (M^1)^t .
 $
 Similar, considering $f(v) = \bar v_{i_1} \bar v_{i_2}$ we find that
 $$
 Q^2 =\bar  M^1 (\bar M^2)^t +  \bar M^2 (\bar M^1)^t ,
 $$
while   letting $f(v) =  v_{i_1} \bar v_{i_2}$ leads to 
 $
2 \big( M^1 (\bar M^1)^t +   M^2 (\bar M^2)^t\big)_{i_1 i_2}  = Q^3_{i_1 i_2};
 $
 so
 $$
 Q^3 = 2 ( M^1 (\bar M^1)^t +   M^2 (\bar M^2)^t) .
 $$
 This completes the proof of \eqref{c_ito}.  The second assertion of the theorem follows by a straightforward calculation.
 \end{proof}

  \section{Projections to convex sets } \label{app_Lip}
  \begin{lemma}
  Let $\cB$ be a closed convex subset of a Hilbert space $X$ of finite or infinite dimension. Let $\cB$
   contains at least two points and let $\Pi:X\to \cB$ be the projection, 
  sending any
  point of $X$ to a nearest point of $\cB$. Then Lip$\,\Pi =1$. 
  \end{lemma}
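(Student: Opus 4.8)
The plan is to exploit the standard variational characterization of the metric projection onto a closed convex set in a Hilbert space. The starting point is that for each $x\in X$ the nearest point $\Pi x$ exists and is unique: this is the classical fact that a closed convex subset of a Hilbert space contains, relative to any given point, a unique element of minimal distance, proved via the parallelogram law (a distance-minimizing sequence in $\cB$ is Cauchy by convexity together with the parallelogram identity, and converges in $\cB$ by completeness of $X$ and closedness of $\cB$; the same computation gives uniqueness). Granting this, the key tool I would establish first is the \emph{obtuse-angle inequality}: a point $p\in\cB$ equals $\Pi x$ if and only if
$$
\langle x - p,\, b - p\rangle \le 0 \qquad \forall\, b\in\cB.
$$
For the forward direction I would use convexity of $\cB$: for $b\in\cB$ and $t\in(0,1]$ the point $(1-t)p + tb$ lies in $\cB$, so $|x-p|^2 \le |x - p - t(b-p)|^2$; expanding the right-hand side, dividing by $t$, and letting $t\to 0^+$ yields the inequality.

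The contraction estimate $\mathrm{Lip}\,\Pi \le 1$ then follows by applying this inequality twice. Writing $p=\Pi x$ and $q=\Pi y$, I would take $b=q$ in the inequality for $p$ and $b=p$ in the inequality for $q$, obtaining $\langle x-p,\,q-p\rangle\le 0$ and $\langle y-q,\,p-q\rangle\le 0$. Adding these and using $p-q=-(q-p)$ gives
$$
\langle (x-y) + (q-p),\, q-p\rangle \le 0,
$$
that is,
$$
|p-q|^2 \le \langle x-y,\, p-q\rangle \le |x-y|\,|p-q|
$$
by the Cauchy--Schwarz inequality; dividing by $|p-q|$ (the case $p=q$ being trivial) yields $|p-q|\le |x-y|$, so $\mathrm{Lip}\,\Pi \le 1$.

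Finally, for the matching lower bound I would invoke the hypothesis that $\cB$ contains at least two points. Any $b\in\cB$ satisfies $\Pi b = b$, since $b$ is its own nearest point in $\cB$; hence for two distinct points $b_1,b_2\in\cB$ we have $|\Pi b_1 - \Pi b_2| = |b_1-b_2|\ne 0$, which forces $\mathrm{Lip}\,\Pi \ge 1$. Combining the two bounds gives $\mathrm{Lip}\,\Pi = 1$. I do not expect a genuine obstacle here, since the result is classical; the only points requiring care are the clean derivation of the obtuse-angle inequality from convexity and the algebraic combination of its two instances. The two-point assumption plays no role beyond excluding the degenerate value $\mathrm{Lip}\,\Pi = 0$ that would occur if $\cB$ were a single point.
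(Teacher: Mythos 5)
Your proof is correct and is essentially the paper's own argument in different notation: the paper's sign conditions $l^a_1\le 0$, $l^b_1\ge 0$ (asserted from one-dimensional minimality of $a$, resp.\ $b$, along the segment $[a,b]\subset\cB$) are exactly your two obtuse-angle inequalities $\langle x-p,\,q-p\rangle\le 0$ and $\langle y-q,\,p-q\rangle\le 0$, and the paper's final estimate $\|B-A\|\ge |(B-A)\cdot e_1|\ge \|b-a\|$ is precisely your Cauchy--Schwarz step after adding the two inequalities. The lower bound $\mathrm{Lip}\,\Pi\ge 1$ via the fact that $\Pi$ restricts to the identity on $\cB$ (which is why two points are needed) is identical in both proofs.
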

  \begin{proof}
  Let $A,B \in X$  and let  $a=\Pi A\in\cB, b=\Pi B\in\cB$. If $A,B\in \cB$ then $a=A$ and $B=b$. So  Lip$\,\Pi \ge 1$ and 
   it remains to show that 
  $$
  \| a-b\| \le \| A-B\| \qquad \forall\, A\ne B. 
  $$
  If $a=b$ the assertion is trivial. Otherwise consider the vectors 
  $
  \xi=b-a, $\, 
  $ l^a=A-a$, \, 
  $l^b= B-b$, 
  and introduce in $X$ an orthonormal basis $(e_1, e_2, \dots)$ such that 
  $e_1 =\xi/|\xi|$. Then $\xi = (\xi_1, \xi_2, \dots)$, where $\xi_1 =| \xi|$ and $\xi_j=0$ for $j\ge2$. Since $a$ is a point in 
  $[a,b]\subset X$, the 
  closest to $A$,
  then $l^a_1 = l^a\cdot e_1 \le0$. Similar $l^b_1 \ge0$. Thus 
  $$
  \| B-A\| = \| \xi+l^b -  l^a\| \ge | \xi_1 + l_1^b - l_1^a| \ge \xi_1 = \| b-a\|,
  $$
  and the assertion is proved.
  \end{proof} 
  
  Note that an analogy of the lemma's statement for a Banach space $X$ in general is wrong.   

\section*{ Acknowledgement}

GH was supported by National Natural Science Foundation of China (No. 20221300605). Both authors were supported
 by the Ministry of Science and Higher Education of the Russian Federation (megagrant No. 075-15-2022-1115).

\bibliography{reference}{}

\begin{thebibliography}{10}

\bibitem{AKN}
V.~I. Arnold, V.~V. Kozlov, and A.~I. Neishtadt.
\newblock {\em Mathematical Aspects of Classical and Celestial Mechanics}.
\newblock Springer, third edition, 2006.

\bibitem{Bill}
P.~Billingsley.
\newblock {\em Convergence of Probability Measures}.
\newblock John Wiley \& Sons, New York, 1999.

\bibitem{BKRS}
V.I. Bogachev, N.~V. Krylov, M.~Rockner, and S.V. Shaposhnikov.
\newblock {\em Fokker-Plank-Kolmogorov equations}.
\newblock AMS Publications, 2015.

\bibitem{BM}
N.~N. Bogoliubov and Y.~A. Mitropolsky.
\newblock {\em Asymptotic Methods in the Theory of Non-linear Oscillations}.
\newblock Gordon and Breach, New York, 1961.

\bibitem{BK}
A.~Boritchev and S.~B. Kuksin.
\newblock {\em One-dimensional turbulence and the stochastic {B}urgers
  equation}.
\newblock Mathematical Surveys and Monographs. AMS Publications, Providence,
  2021.

\bibitem{Burd}
V.~Burd.
\newblock {\em The Averaging Method on Infinite Time-Interval and Some Problems
  of the Theory of Oscillations}.
\newblock Yaroslavl University Press, Yaroslavl, 2013.
\newblock (in Russian).

\bibitem{DW}
J.~Duan and W.~Wang.
\newblock {\em Effctive Dynamics of Stochastic Partial Differential Equations}.
\newblock Elsevier, 2014.

\bibitem{Dud}
R.~M. Dudley.
\newblock {\em {Real Analysis and Probability}}.
\newblock Cambridge studies in advanced mathematics. Cambridge University
  Press, 2002.

\bibitem{AD}
A.~Dymov.
\newblock Nonequilibrium statistical mechanics of weakly stochastically
  perturbed system of oscillators.
\newblock {\em Ann. Henri Poincare}, 17:1825--1882, 2016.

\bibitem{FW}
M.~Freidlin and A.~Wentzell.
\newblock {\em Random {P}erturbations of {D}ynamical {S}ystems}.
\newblock Springer-Verlag, New York, 2nd edition, 1998.

\bibitem{HGK22}
G.~Huang and S.~Kuksin.
\newblock On averaging and mixing for stochastic {PDE}s.
\newblock {\em J. Dyn. Diff. Eq.}, 2022.
\newblock doi 10.1007/s10884-022-10202-w.

\bibitem{HKM}
G.~Huang, S.~B. Kuksin, and A.~Maiocchi.
\newblock {Time-averaging for weakly nonlinear CGL equations with arbitrary
  potential}.
\newblock {\em Fields Inst. Comm.}, 75:323--349, 2015.

\bibitem{JKW}
W.~Jian, S.B. Kuksin, and Y.~Wu.
\newblock {The Bogolyubov-Krylov averaging}.
\newblock {\em Russ. Math. Surveys}, 75:427--444, 2018.

\bibitem{brownianbook}
I.~Karatzas and S.~E. Shreve.
\newblock {\em Brownian Motion and Stochastic Calculus}.
\newblock Springer, 2002.

\bibitem{Khas66}
R.~Khasminski.
\newblock On stochastic processes defined by differential equations with a
  small parameter.
\newblock {\em Th. Probab. Appl.}, 11:211--228, 1966.

\bibitem{Khas68}
R.~Khasminski.
\newblock On the averaging principle for {I}to stochastic differential
  equations.
\newblock {\em Kybernetika}, 4:260--279, 1968.
\newblock (in Russian).

\bibitem{khasminskii}
R.~Khasminskii.
\newblock {\em Stochastic Stability of Differential Equations}.
\newblock Springer, second edition, 2012.

\bibitem{Kif}
Yu. Kifer.
\newblock {\em Large Deviations and Adiabatic Transitions for Dynamical Systems
  and Markov Processes in Fully Coupled Averaging}, volume 944 of {\em Memoirs
  of the American Mathematical Society}.
\newblock AMS Publications, Providence, 2009.

\bibitem{KM}
S.~B. Kuksin and A.~Maiocchi.
\newblock { Resonant averaging for small-amplitude solutions of stochastic
  nonlinear Schr\"odinger equations}.
\newblock {\em Proc.~A of the Royal Soc. Edinburgh}, 147:357--394, 2017.

\bibitem{Kulik}
A.~Kulik.
\newblock {\em Ergodic Behaviour of Markov Processes}.
\newblock De Gruyter, Berlin, 2018.

\bibitem{Krs}
S.-J. Liu and M.~Krstic.
\newblock {\em Stochastic Averaging and Stochastic Extremum Seeking}.
\newblock Springer, 2012.

\bibitem{mattingly}
J.~Mattingly, A.~Stuart, and D.~Higham.
\newblock E{rgodicity for SDEs and approximations: locally Lipschitz vector
  fields and degenerate noise}.
\newblock {\em Stochastic processes and their applications}, 101(2):185--232,
  2002.

\bibitem{Skor}
A.~V. Skorokhod.
\newblock {\em Asymptotic Methods in the Theory of Stochastic Differential
  Equations}, volume~78 of {\em Translations of Mathematical Monographs}.
\newblock AMS Publications, Providence, 1989.

\bibitem{SV}
D.~Stroock and S.~R.~S. Varadhan.
\newblock {\em Multidimensional Diffusion Processes}.
\newblock Springer, 1979.

\bibitem{Ver}
A.~Yu. Veretennikov.
\newblock On the averaging principle for systems of stochastic differential
  equations.
\newblock {\em Math. USSR Sbornik}, 69:271--284, 1991.

\bibitem{Vil}
C.~Villani.
\newblock {\em {Optimal Transport}}.
\newblock Springer, 2009.

\bibitem{Whit}
H.~Whitney.
\newblock Differentiable even functions.
\newblock {\em Duke Math. J.}, 10:159--160, 1943.
\newblock Also see in H. Whitney, Collected Papers vol.~1, Birkh\"auser, 1992,
  pp. 309-310.

\end{thebibliography}
\bibliographystyle{plain}

\end{document}